\documentclass[a4paper,11pt,twoside]{article}
\usepackage[T1]{fontenc}
\usepackage[utf8]{inputenc}
\usepackage[english]{babel}
\usepackage{amsmath, amssymb, amsthm, amsfonts, mathtools, mathrsfs}	
\usepackage{enumerate}
\usepackage[shortlabels]{enumitem}
\usepackage{booktabs, caption, graphicx, subcaption, wrapfig} 	
\captionsetup{tableposition=top,figureposition=bottom,font=small,format=plain,labelfont={sf,bf}}
\usepackage{geometry}
\geometry{a4paper,top=2.5cm,bottom=2.5cm,left=2.5cm,right=2.5cm,heightrounded,bindingoffset=0mm}

\usepackage{xcolor}

\usepackage{cite} 
\usepackage{upgreek}
\usepackage{siunitx}
\usepackage{bbm}

\usepackage{fancyhdr} 

\newcommand{\R}{\mathbb{R}}
\newcommand{\N}{\mathbb{N}}

\newcommand{\C}{\mathbb{C}}

\numberwithin{equation}{section}

\theoremstyle{plain}
\newtheorem{theorem}{Theorem}[section] 
\newtheorem{proposition}[theorem]{Proposition} 
\newtheorem{lemma}[theorem]{Lemma}

\theoremstyle{definition}

\newtheorem{remark}[theorem]{Remark}


\DeclarePairedDelimiter{\abs}{\lvert}{\rvert}
\DeclarePairedDelimiter{\norm}{\lVert}{\rVert}

\DeclareMathOperator*{\argmin}{arg\,min}

\newcommand{\eps}{\varepsilon}
\renewcommand{\phi}{\varphi}
\renewcommand{\bar}{\overline}
\renewcommand{\vec}{\boldsymbol}


\def\genspazio #1#2#3#4#5{#1^{#2}(#5,#4;#3)}
\def\spazio #1#2#3{\genspazio {#1}{#2}{#3}T0}

\def\LT {\spazio L}
\def\HT {\spazio H}
\def\C #1#2{\mathcal{C}^{#1}([0,T];#2)}


\def\Lx #1{L^{#1}(\Omega)}

\def\Lqt #1{L^{#1}(Q_T)}
\def\Hx #1{H^{#1}(\Omega)}

\def\Huo {H^1_0(\Omega)}

	

\def\matteo #1{{\color{blue}#1}}


\def\ls{<}
\def\gs{>}
\def\mezzo {\frac{1}{2}}

\def\de {\mathrm{d}}
\def\D {\mathrm{D}}

\def\n {\vec{n}}
\def\phib {\bar{\phi}}
\def\pb {\bar{p}}
\def\sigmab {\bar{\sigma}}
\def\hh {\mathbbm{h}}

\def\Iad {\mathcal{I}_{\text{ad}}}

\def\HH {\mathbb{H}}
\def\VV {\mathbb{V}}
\def\vpsi {\vec{\uppsi}}

\def\vh {\vec{h}}

\def\Cb {\bar{C}}

\def\Rcal {\mathcal{R}}
\def\Jcal {\mathcal{J}}

\def\triplein {(\phi_0, \sigma_0, p_0)}
\def\tripleinin {(\phi^0_0, \sigma^0_0, p^0_0)}
\def\tripleinbar {(\bar{\phi}_0, \bar{\sigma}_0, \bar{p}_0)}

\def\triplefin {(\phi(T), \sigma(T), p(T))}

\def\Ccal {\mathcal{C}}

\def\phimeas {\phi_{\text{meas}}}
\def\sigmameas {\sigma_{\text{meas}}}
\def\pmeas  {p_{\text{meas}}}
\def\triplemeas {(\phimeas, \sigmameas, \pmeas)}
\def\vpsimeas {\vec{\uppsi}_{\text{meas}}}
\def\phitil {\tilde{\phi}}
\def\sigmatil {\tilde{\sigma}}
\def\ptil {\tilde{p}}
\def\tripletil {(\phitil, \sigmatil, \ptil)}

\usepackage{hyperref}

\begin{document}
	
	\begin{center}
		
		\LARGE{\textbf{Iterative algorithms for the reconstruction of \\ early states of prostate cancer growth}}
		
		\vskip0.5cm
		
		\large{\textsc{Elena Beretta$^1$}} \\
		\normalsize{e-mail: \texttt{eb147@nyu.edu}} \\
		\vskip0.35cm
		
		\large{\textsc{Cecilia Cavaterra$^2$}} \\
		\normalsize{e-mail: \texttt{cecilia.cavaterra@unimi.it}} \\
		\vskip0.35cm
		
		\large{\textsc{Matteo Fornoni$^3$}} \\
		\normalsize{e-mail: \texttt{matteo.fornoni01@universitadipavia.it}} \\
		\vskip0.35cm

        \large{\textsc{Guillermo Lorenzo$^4$}} \\
		\normalsize{e-mail: \texttt{guillermo.lorenzo.gomez@sergas.es}} \\
		\vskip0.35cm
		
		\large{\textsc{Elisabetta Rocca$^5$}} \\
		\normalsize{e-mail: \texttt{elisabetta.rocca@unipv.it}} \\
		\vskip0.35cm
		
		\footnotesize{$^1$Division of Science, New York University Abu Dhabi, Saadiyat Island, Abu Dhabi, United Arab Emirates}
		\vskip0.1cm
		
		\footnotesize{$^2$Department of Mathematics ‘‘F. Enriques’’, University of Milan, 20133 Milan, Italy \\ \& IMATI-C.N.R., 27100 Pavia, Italy}
		\vskip0.1cm
		
		\footnotesize{$^3$Department of Mathematics ‘‘F. Casorati’’, University of Pavia, 27100 Pavia, Italy}
		\vskip0.1cm

        \footnotesize{$^4$Health Research Institute of Santiago de Compostela, 15706 Santiago de Compostela, Spain \& Oden Institute for Computational Engineering and Sciences, The University of Texas at Austin, Austin, Texas 78712 USA}
		\vskip0.1cm

        \footnotesize{$^5$Department of Mathematics ‘‘F. Casorati’’, University of Pavia, 27100 Pavia, Italy \\ \& IMATI-C.N.R., 27100 Pavia, Italy}
		\vskip0.35cm
		
	\end{center}

    \vskip2mm
	\begin{abstract}
 
     \noindent
     The development of mathematical models of cancer informed by time-resolved measurements has enabled personalised predictions of tumour growth and treatment response. However, frequent cancer monitoring is rare, and many tumours are treated soon after diagnosis with limited data. To improve the predictive capabilities of cancer models, we investigate the problem of recovering earlier tumour states from a single spatial measurement at a later time. Focusing on prostate cancer, we describe tumour dynamics using a phase-field model coupled with two reaction-diffusion equations for a nutrient and the local prostate-specific antigen. We generate synthetic data using a discretisation based on Isogeometric Analysis. Then, building on our previous analytical work, \cite{BCFLR2024}, we propose an iterative reconstruction algorithm based on the Landweber scheme, showing local convergence with quantitative rates and exploring an adaptive step size that leads to faster reconstruction algorithms. Finally, we run simulations demonstrating high-quality reconstructions even with long time horizons and noisy data.

		\vskip2mm
		
		\noindent {\bf Key words:} prostate cancer; phase field; non-linear parabolic system; inverse problems; Landweber scheme; isogeometric analysis; mathematical oncology.
		
		\vskip2mm
		
		\noindent {\bf AMS (MOS) Subject Classification: 
        35K51, 
        35R30,  
        35Q92, 
        65M32, 
        92C50. 
        }
	\end{abstract}


\pagestyle{fancy}
\fancyhf{}	
\fancyhead[EL]{\thepage}
\fancyhead[ER]{\textsc{Beretta -- Cavaterra -- Fornoni -- Lorenzo -- Rocca}} 
\fancyhead[OL]{\textsc{Reconstructing early states of prostate cancer}} 
\fancyhead[OR]{\thepage}

\renewcommand{\headrulewidth}{0pt}
\setlength{\headheight}{5mm}

\thispagestyle{empty} 

\section{Introduction}
Mathematical modelling of cancer dynamics during growth and treatment contributes to gaining insight into the biophysical mechanisms underlying these phenomena \cite{yin2019review,Lorenzo2022_review,kazerouni2020integrating}.
Some of these models have also been shown to enable the calculation of personalised tumour forecasts that can assist physicians in clinical decision-making \cite{Lorenzo2022_review,Wu2022,Lorenzo2024,Hormuth2021,Wong2016}.
Towards this end, these models employ the clinical and imaging data that are regularly collected from patients according to cancer management protocols in order to inform diagnosis, prognosis, patient triaging, and treatment selection \cite{Mottet2021,kazerouni2020integrating,Lorenzo2022_review,Lorenzo2024,Hormuth2021,Wu2022}.
In particular, personalised tumour forecasting requires data at several time points to initialise the model, calibrate its parameters, and then produce the patient-specific prediction \cite{Lorenzo2022_review,Wu2022,Lorenzo2024,Hormuth2021,Wong2016}.
In some clinical scenarios, these longitudinal data are available because they are currently used to assess tumour changes that correlate with clinical endpoints of interest  (e.g., progression to higher-risk disease, treatment failure, survival) \cite{Giganti2021,tudorica2016early}.
For example, longitudinal tumour measurements are leveraged to identify progression to more malignant stages of newly diagnosed untreated prostate cancer during active surveillance \cite{Lorenzo2024}, as well as to assess therapeutic response during neoadjuvant chemotherapy of breast cancer \cite{Wu2022} and chemoradiation of high-grade glioma \cite{Hormuth2021}.

Despite the increased information on tumour dynamics provided by longitudinal cancer monitoring,  follow-up strategies may not include sufficiently frequent tests to accurately assess changes in tumour status and inform mathematical models \cite{Lorenzo2022_review,kazerouni2020integrating,chaudhuri2023predictive,Lorenzo2024}.
Additionally, some tumours are only diagnosed once they have developed sufficiently to produce symptoms or enable detection with standard-of-care screening methods \cite{Lorenzo2022_review,kazerouni2020integrating,chaudhuri2023predictive,Lorenzo2024}.
In these situations, the estimation of recent patient-specific tumour dynamics from a single dataset could provide physicians with more accurate estimations of prognosis to guide clinical decision-making.
This highly-coveted computational capability could further contribute to reduce treatment excesses and deficiencies, which can respectively affect the patients’ quality of life and life expectancy \cite{gupta2022systemic,ziu2020role,neal2020ten}.
Despite the inherent difficulty in estimating previous tumour dynamics from a single dataset, model-constrained reconstruction algorithms are a viable methodology to address this computational challenge and crucial clinical demand \cite{BCFLR2024,SSMB2020}. 
For example, following the collection of a medical imaging measurement to characterise the tumour morphology (e.g., at diagnosis, after treatment), a reconstruction algorithm based on an adequate mathematical model can identify the tumour status at an earlier time  \cite{Lorenzo2022_review,SSMB2020,chaudhuri2023predictive,BCFLR2024}.
The resulting reconstruction of tumour dynamics can then be used to better estimate biomarkers of progression or therapeutic response, as well as to locate the region of the host organ where the tumour originated.
Indeed, the latter information can have important relevance for some tumours, for instance, in the brain and the prostate \cite{ali2024tale, jungk2019location}.
Furthermore, the reconstruction of early tumour states is also useful in some preclinical scenarios, for example, before the onset of experimental procedures or in sparse data collection regimens with large changes in tumour dynamics in between consecutive measurements \cite{kazerouni2020integrating,Lima2022,Yang2022}.

Here, we investigate model-constrained reconstruction algorithms informed by a single spatial measurement (e.g., \emph{via} medical imaging), and we apply them to the estimation of early stages of newly diagnosed prostate cancer.
The detection of these tumours relies on a multiparametric magnetic resonance imaging scan, which is motivated by increasing values of the serum prostate-specific antigen (PSA; the main blood biomarker of prostate cancer) and guides the ensuing biopsy to confirm the disease histopathologically \cite{Mottet2021,Lorenzo2024}.
The clinical management options for newly diagnosed prostate cancer include active surveillance for lower-risk disease (i.e., monitoring of indolent cases with longitudinal MRI, PSA, and biopsies until therapeutic intervention is needed) and treatment for higher-risk tumours (e.g., surgery, radiotherapy) \cite{Mottet2021}.
Therefore, the accurate identification of the clinical risk of prostate cancer at diagnosis is fundamental to guide the management of the disease, and reconstruction of earlier disease dynamics prior to diagnosis can help in this crucial triaging step \cite{Lorenzo2024,Giganti2021,BCFLR2024}.
Towards this end, our reconstruction algorithms employ a mathematical model of prostate cancer growth that relies on the phase-field method, and that has been studied analytically and computationally in our previous works \cite{CGLMRR2019,CGLMRR2021,BCFLR2024}.
Phase-field models constitute an established spatiotemporal continuous formulation of the dynamics of the tumour geometry, which has also been used in the context of optimal control problems \cite{Cavaterra2019, Colli2017, EK2020, F2024, FLS2021, Garcke2018, Lorenzo2016}.
Thus, our model relies on a continuous phase field $\phi$, such that $\phi\approx0$ in healthy prostatic tissue and $\phi\approx1$ in the tumour.
The tumour-healthy tissue interface exhibits a smooth and steep profile, in which the tumour phase field rapidly varies between 0 and 1.
Our model further assumes that tumour growth is driven by a generic nutrient $\sigma$ (e.g.,~oxygen or glucose), whose concentration follows reaction-diffusion dynamics.
Moreover, another reaction-diffusion equation describes the local dynamics of tissue PSA $p$, which represents the PSA leaked to the bloodstream per unit volume of prostatic tissue \cite{Lorenzo2016}.


More specifically, the problem we want to address is a backward inverse problem for a nonlinear system of parabolic equations. 
Following the notation used in \cite{BCFLR2024}, we call $\mathcal{R}: (\phi_0, \sigma_0, p_0) \mapsto (\phi(T), \sigma(T), p(T))$ the nonlinear solution operator which associates to any initial data $(\phi_0, \sigma_0, p_0)$ the value of the solution at the final time $(\phi(T), \sigma(T), p(T))$. 
Then, given some measurements $(\phimeas, \sigmameas, \pmeas)$, we can formulate our inverse problem as that of finding initial data $(\phi_0, \sigma_0, p_0)$ such that the solution $(\phi, \sigma, p)$ to our model satisfies $\phi(T) = \phimeas$, $\sigma(T) = \sigmameas$ and $p(T) = \pmeas$. 
For our theoretical discussion, we consider measurements of all three variables $\phimeas$, $\sigmameas$ and $\pmeas$ at the terminal time. 
However, in our simulations, we focus only on $\phimeas$ as a specific case where the datum comes from the MRI at diagnosis. 
This would be the most common use of this procedure in a practical scenario.
We mention that, even in the linear case, backward inverse problems of this kind, are well-known to be severely ill-posed, with very weak conditional logarithmic stability from the data, see for example \cite{Hao2011, Payne75, LRS, isakov}. 
As such, while being interesting problems to study, due to the underlying ill-posedness, in the context of tumour growth models the literature on backward inverse problems of this kind is quite lacking.
We mention for instance \cite{JBJA2019}, where the authors also use a similar Landweber-type method to recover the initial datum in a Fisher--Kolmogorov model for brain tumour growth. 
Albeit getting some good reconstruction results in $3D$, the authors do not address the analytical questions related to the inverse problem, as was done in \cite{BCFLR2024}, even if the model is simpler. 
Another approach is followed in \cite{SSMB2020, subramanian2022ensemble}, where the authors still consider a Fisher--Kolmogorov model (with some additional couplings) and regularise the unknown initial data by assuming it is representable by a sum of Gaussians centred in some points belonging to a lattice. 
They further use a Tikhonov-like regularisation approach, by additionally enforcing sparsity, to reconstruct not only the initial datum but also some model coefficients. 
With our contributions, we aim to show that a good reconstruction can be guaranteed both theoretically and numerically without assuming restrictive hypotheses on the initial data, even for more complex models.

As a preliminary step, it is of paramount importance to regularise the problem, by establishing physically relevant a priori assumptions on the unknown initial data that lead to better, possibly  Lipschitz, dependence of the initial data on the measurements. 
Following similar ideas, in \cite{BCFLR2024} the authors applied this procedure to the above-introduced inverse problem, proving a quantitative Lipschitz stability estimate for the reconstruction of the initial data.
Starting from these results, we now propose an iterative algorithm to solve the identification problem numerically. 
Such reconstruction algorithm is based on the Landweber iteration scheme, a widely used method in approximating and regularising solutions to inverse problems \cite{DQS2012, HNS1995, kaltenbacher:neubauer:scherzer}.
It is well-known that, when good stability estimates are available, this kind of iterative scheme is locally convergent and acts as a regularisation method in the presence of noise on the measurements \cite{kaltenbacher:neubauer:scherzer}.
Indeed, in our case, we are able to prove local convergence of the Landweber algorithm (Theorem \ref{thm:landweber}) both with a constant step size and an adaptive one, based on the steepest descent.
However, since the problem is ill-posed and all stability constants depend exponentially on the final time, such step size choices do not work well enough if $T$ is too large. 
In such scenarios, we employ a different step size recently introduced in \cite{MM2019adaptive}, which, despite lacking a comprehensive theoretical guarantee, has proven to be surprisingly accurate in the reconstruction.
Hence, by using the Landweber scheme with this new choice of the step size, we are even able to get good reconstruction results when the terminal time $T$ reaches one year within a low number of iterations. 
Under this setup, we conduct various numerical experiments to validate the analytical results and explore the behaviour of the reconstruction algorithm.
For the numerical discretisation, we employ Isogeometric Analysis (IGA) in space \cite{Cottrell2009} and the generalised $\alpha$-method in time \cite{Chung1993}.
We then implement the above-mentioned Landweber scheme with different step-size choices, depending on the chosen time horizon.
All our experiments are conducted considering a tumour growing in a square tissue patch and using synthetic ground truth data generated by the forward phase-field model.
In particular, we focus only on the reconstruction of the tumour variable, even though reconstructions of the other variables are also possible within our framework.
Through our simulation study, we show that, if $T$ is relatively small (i.e., less than a month), the Landweber scheme with the steepest descent adaptive step size produces high-quality reconstructions and matches the theoretical results on the order of convergence. 
Noticing that the number of iterations quickly rises if $T$ grows larger, we switched to the adaptive step size introduced in \cite{MM2019adaptive} to conduct experiments for longer time horizons (e.g., several months to a year). 
Thus, by applying some additional care in the selection of the initial guess, we also were able to get good reconstructions even in this much more challenging case. 
Moreover, we additionally assessed the performance of our algorithms in the presence of Gaussian noise on the terminal data, getting faithful reconstructions also in this case.

The paper is organised as follows. 
In Section~\ref{sec:mathmodel} we thoroughly introduce our mathematical model, by referring to \cite{CGLMRR2019}. 
Section~\ref{sec:analysis} is devoted to the recollection of the theoretical results obtained in \cite{BCFLR2024} on the analysis of the inverse problem. 
Section~\ref{sec:landweber} addresses the problem of the approximation of the solution through the use of a Landweber iteration scheme, by also proving some rigorous results on its convergence properties. 
In Section~\ref{sec:numeth} we present our computational methods and, then, in Section~\ref{sec:sims} we show and comment on some representative simulations.
Finally, in Section~\ref{sec:discussion} we discuss our results, possible limitations, and future research directions.


\section{Mathematical model}
\label{sec:mathmodel}

We leverage the phase-field model of prostate cancer growth that was presented and analysed in Refs.~\cite{CGLMRR2019, CGLMRR2021, BCFLR2024}.
The interested reader is referred to these previous works for a detailed presentation of the biological phenomena included in the model. 
The only difference with respect to the original formulation in \cite{CGLMRR2019} is that we have eliminated the terms describing cytotoxic and antiangiogenic treatment effects.
The rationale for this modification is that here we are interested in reconstructing early stages of prostate cancer growth before diagnosis and, thus, before the onset of treatments.
In the following, we briefly present the main equations of the model and outline their components. 
Let $\Omega \subset \R^N$, $N=2,3$, be an open and bounded domain with $\mathcal{C}^2$ boundary and outward unit normal vector $\n$. Let $T \gs 0$ be a time horizon, and denote $Q_t := \Omega \times (0,t)$ and $\Sigma_t = \partial \Omega \times (0,t)$ for any $t \in (0,T]$. 
Then, the prostate cancer model can be formulated as
\begin{alignat}{2}
    & \partial_t \phi = \lambda \Delta \phi - F'(\phi) + m(\sigma) \hh'(\phi) 
    && \hbox{in $Q_T$,} \label{eq:phi} \\
    & \partial_t \sigma = \eta \Delta \sigma 
    + S_h (1-\phi) + S_c \phi 
    - \left( \gamma_h (1-\phi) + \gamma_c \phi  \right) \sigma
    \qquad && \hbox{in $Q_T$,} \label{eq:sigma} \\
    & \partial_t p = D \Delta p  
    + \alpha_h (1 - \phi)+ \alpha_c \phi - \gamma_p p
    && \hbox{in $Q_T$,} \label{eq:p} \\
    & \phi = 0, \quad \partial_{\n} \sigma = 0, \quad \partial_{\n} p = 0 
    && \hbox{on $\Sigma_T$,} \label{bc} \\
    & \phi(0,x) = \phi_0, \quad \sigma(0,x) = \sigma_0, \quad p(0,x) = p_0
    && \hbox{in $\Omega$.} \label{ic}
\end{alignat}
The differential operators in these equations are defined as follows: the subscript $t$ denotes partial differentiation with respect to time, $\Delta$ is the Laplace operator with respect to the space variables, $F'$ denotes the derivative of $F$, and the subscript $\vec{n}$ indicates the outward normal derivative to the domain boundary $\partial\Omega$.

In Eqs.~\eqref{eq:phi}-\eqref{ic}, $\phi$ is a phase field that identifies the spatial regions occupied by healthy tissue ($\phi\approx 0$) and the tumour ($\phi\approx 1$).
In the context of cancer phase-field models, $\lambda=M\ell^2$ is the tumour cell diffusion coefficient, $F(\phi) = M \phi^2 (1-\phi)^2$ is a regular double-well potential, $\hh(\phi) = M \phi^2 (3 -2 \phi)$ is an interpolation function, $M$ is a positive constant associated with tumour cell mobility, and $\ell$ is a positive constant denoting the interface length scale \cite{Xu2016,gomez2018computational}.
Additionally, the spatiotemporal dynamics of the tumour phase field in Eq.~\eqref{eq:phi} is driven by a nutrient field $\sigma$ \emph{via} the tilting function $m(\sigma)$, which represents net tumour cell proliferation.
This function is defined as
\begin{equation}\label{msigma}
m(\sigma )=m_{ref}\left( \frac{\rho +A}{2}+\frac{\rho -A}{\pi }\arctan \left( \frac{\sigma -\sigma _{l}}{\sigma _{r}}\right) \right),
\end{equation}
\noindent where $m_{ref}$, $\rho$, and $A$ are constants that respectively represent a positive scaling factor and two non-dimensional indices associated with tumour cell proliferation and death.
We further define $\rho=\frac{K_\rho}{\bar{K_\rho}}$ and $A=-\frac{K_A}{\bar{K_A}}$, where $K_\rho$ and $K_A$ are the tumour cell proliferation and death rates while $\bar{K_\rho}$ and $\bar{K_A}$ are their corresponding scaling reference values.
The constants $K_\rho, \bar{K_\rho},K_A,$ and $\bar{K_A}$ are all positive.
Moreover, in Eq.~\eqref{msigma}, the positive constants $\sigma_l$ and $\sigma_r$ denote a reference and a threshold value for the nutrient concentration in describing the dependence of the proliferation activity of the tumour on the nutrient availability.

The nutrient follows reaction-diffusion dynamics according to Eq.~\eqref{eq:sigma}, where $\eta$ denotes the nutrient diffusivity.
The first two reaction terms in the right-hand side of Eq.~\eqref{eq:sigma} represent the nutrient supply to healthy and cancerous tissue governed by the rates $S_h$ and $S_c$.
Similarly, the last two reaction terms in this equation model nutrient consumption in healthy and cancerous tissue mediated by the rates $\gamma_h$ and $\gamma_c$.
In Eq.~\eqref{eq:p}, we also choose reaction-diffusion dynamics for the tissue PSA $p$ \cite{Lorenzo2016}.
The tissue PSA diffusivity is denoted by $D$.
The first two reaction terms in the right-hand side of Eq.~\eqref{eq:p} describe the production of tissue PSA in healthy and cancerous tissue at rates $\alpha_h$ and $\alpha_c$, respectively.
The last reaction term in this equation represents the natural decay of tissue PSA with rate $\gamma_p$.
Hence, to recover the serum PSA value $P_s$ used in clinical practice, it suffices to integrate the tissue PSA over the spatial domain (i.e., $P_s=\int_\Omega p \, \de x$).
Of note, all the parameters in Eqs.~\eqref{eq:sigma} and \eqref{eq:p} are positive and constant in this work.
Furthermore, for simplicity of exposition, we sometimes adopt the compact notation 
\[
    \gamma_{ch} := \gamma_c - \gamma_h, \quad S_{ch} := S_c - S_h, \quad \alpha_{ch} := \alpha_c - \alpha_h.
\]

Finally, Eqs.~\eqref{bc} define the boundary conditions and Eqs.~\eqref{ic} provide the initial conditions of the model.
In particular, we choose no-flux boundary conditions for the nutrient and the tissue PSA, while we define zero-valued Dirichlet boundary conditions for the tumour phase field.
Hence, we assume that the tumour is confined within the domain, which aligns with the organ-confined stage of the majority of newly-diagnosed prostate cancer cases \cite{Mottet2021}.
Additionally, the functions $\phi_0$, $\sigma_0$, and $p_0$ are spatial maps at $t=0$.

\section{Analytical results}
\label{sec:analysis}

We first define the following spaces:
\[ H = \Lx2, \quad V_0 = H^1_0(\Omega), \quad V_0^* = (H^1_0(\Omega))^*, \quad V = \Hx1, \quad V^* = (\Hx1)^*, \]
\[ W_0 = \Hx2 \cap \Huo, \quad W = \left\{ u \in \Hx2 \mid \partial_{\n} u = 0 \right\}. \]
By standard results, we know that, if $H$ is identified with its dual, the following compact and dense embeddings hold:
\[ W_0 \hookrightarrow V_0 \hookrightarrow H \hookrightarrow V_0^* \quad \text{and} \quad W \hookrightarrow V \hookrightarrow H \hookrightarrow V^*.  \]
Additionally, we recall that, by elliptic regularity, we can use the equivalent norms:
\[ \norm{u}^2_{W_0} := \norm{u}^2_H + \norm{\Delta u}^2_H, \quad \norm{u}^2_W := \norm{u}^2_H + \norm{\Delta u}^2_H. \]
We further define the following spaces:
\[ \HH = H \times H \times H, \quad \VV = V_0 \times V \times V. \]
Regarding the parameters of the system, we assume the following hypotheses:
\begin{enumerate}[font = \bfseries, label = A\arabic*., ref = \bf{A\arabic*}]
	\item\label{ass:coeff} $\lambda, \eta, \gamma_h, \gamma_c, S_h, S_c, D, \gamma_p, \alpha_h, \alpha_c > 0$.
    \item\label{ass:Fh} $F(s) =  M s^2(1-s)^2$ and $\hh(s) = M s^2(3-2s)$, with $M>0$, for any $s \in \R$. In particular, we observe that $F, \hh \in \Ccal^\infty(\R)$. 
    \item\label{ass:m} $m(s) = m_{\text{ref}} \left( \frac{\rho + A}{2} + \frac{\rho - A}{2} \arctan \left( \frac{s - \sigma_l}{\sigma_r} \right) \right)$, with $m_{\text{ref}}, \rho, A, \sigma_l, \sigma_r > 0$. In particular, we observe that $m$ and $m'$ are Lipschitz continuous on $\R$ and $m, m', m'' \in \Lx \infty$.
\end{enumerate}
In what follows, we use the symbol $C \gs 0$, which may also change from line to line, to denote positive constants depending only on the fixed parameters of the system. In some cases, we will use a subscript to highlight some particular dependence of these constants.

In this section, we briefly recall the main analytical results proved in \cite{BCFLR2024}.
We refer the interested reader to the cited article for a more detailed exposition, as well as the proofs of the corresponding results.
We first introduce the set of admissible initial data as follows:
\begin{equation}
    \label{admset:initial}
    \begin{split}
    \Iad = & \big\{ (\phi_0, \sigma_0, p_0) \in \VV \mid 0 \le \phi_0 \le 1, \, 0 \le \sigma_0 \le \sigma_{\text{max}}, \\ 
    & \qquad \qquad 0 \le p_0 \le p_{\text{max}} \hbox{ and } \norm{(\phi_0, \sigma_0, p_0)}_{\VV} \le \Cb \big\},
    \end{split}
\end{equation}
where $\sigma_{\text{max}}, p_{\text{max}} \in \Lx \infty$ and $\Cb > 0$ are given. Note that $\Iad$ is a closed and convex subset of $\VV \cap \Lx\infty^3$.
Next, we consider the forward operator $\Rcal: \Iad \to \HH$ which associates to any initial data $(\phi_0, \sigma_0, p_0) \in \Iad$ the corresponding solution $(\phi(T), \sigma(T), p(T))$ to \eqref{eq:phi}--\eqref{ic}, evaluated at the final time, that is 
\begin{equation}
    \label{def:R}
    \Rcal: \Iad \to \HH, \quad \Rcal((\phi_0, \sigma_0, p_0)) = (\phi(T), \sigma(T), p(T)).
\end{equation}
By \cite[Theorem 3.2]{CGLMRR2019} and \cite[Proposition 2.3]{BCFLR2024}, we know that the forward map $\Rcal$ is well-defined and Lipschitz continuous.

\noindent
Moreover, by \cite[Theorem 2.5]{BCFLR2024}, $\Rcal$ is also continuously Fr\'echet differentiable in an open subset $\mathcal{I}_R$ of $\VV \cap \Lx\infty^3$, containing $\Iad$.
In particular, its derivative can be fully characterised in the following way. 
Indeed, we introduce the linearised system:
\begin{alignat}{2}
	& \partial_t Y - \lambda \Delta Y + F''(\phib) Y - m(\sigmab) \hh''(\phib) Y - m'(\sigmab) \hh'(\phib) Z = 0 
	\qquad && \hbox{in $Q_T$,} \label{eq:philin} \\
	& \partial_t Z - \eta \Delta Z + \gamma_h Z + (\gamma_c - \gamma_h) (\sigmab Y + \phib Z) - (S_c - S_h) Y = 0 
	\quad && \hbox{in $Q_T$,} \label{eq:sigmalin} \\
	& \partial_t P - D \Delta P + \gamma_p P 
	= (\alpha_c - \alpha_h) Y 
	&& \hbox{in $Q_T$,} \label{eq:plin} \\
	& Y = 0, \quad \partial_{\n} Z = \partial_{\n} P = 0 
	&& \hbox{on $\Sigma_T$,} \label{bclin} \\
	& Y(0) = h, \quad Z(0) = k, \quad P(0) = w
	&& \hbox{in $\Omega$,} \label{iclin}
\end{alignat}
where $h, k ,w \in \Lx2$. 
It is shown in \cite[Proposition 2.4]{BCFLR2024} that the linearised system \eqref{eq:philin}--\eqref{iclin} is well-posed, both in terms of weak and strong solutions. 


\noindent
Then, we can write the explicit expression of the Fr\'echet derivative of the operator $\Rcal$ as follows:
\begin{equation}
    \label{frechet:derivative}
    \D\Rcal(\phib_0, \sigmab_0, \pb_0)[(h,k,w)] = (Y(T), Z(T), P(T)).
\end{equation}
We stress that $\Rcal$ being of class $\Ccal^1$ means that the Fr\'echet derivative $\D \Rcal$ is Lipschitz continuous as a function from $\mathcal{I}_R$ to the space $\mathcal{L}(\VV \cap L^\infty(\Omega)^3, \HH)$. More precisely, the following estimate holds:
\begin{equation}
    \label{lipconst:frechet}
    \norm{\D\mathcal{R}((\phib_0^1, \sigmab_0^1, \pb_0^1)) - \D\mathcal{R}((\phib_0^2, \sigmab_0^2, \pb_0^2))}^2_{\mathcal{L}(\VV \cap \Lx\infty^3, \HH)} \le C_0 \norm{(\phib_0^1, \sigmab_0^1, \pb_0^1) - (\phib_0^2, \sigmab_0^2, \pb_0^2)}^2_{\HH},
\end{equation}
for some constant $C_0 \gs 0$ depending only on the parameters of the system.

The above regularity of the forward map opens the possibility to rigorously address the inverse problem of reconstructing the initial data, given a measurement at the final time.
Indeed, through a logarithmic convexity approach, in \cite{BCFLR2024} the authors proved two key stability results for the inverse problem, which we recall below. 
Call $M > 0$ and $M_1 > 0$ the minimal constants such that 
\begin{equation}
    \label{bound:c0h}
    \norm{(\phi, \sigma, p)}_{\C 0 {\HH}} \le M \quad \text{and} \quad \norm{(\phi, \sigma, p)}_{\HT1{\HH}} \le M_1
\end{equation}
uniformly for $(\phi_0, \sigma_0, p_0) \in \Iad$.
We recall that such estimates hold due to the well-posedness results in \cite[Theorem 3.2]{CGLMRR2019}.
Then, we have the following result.

\begin{theorem}
	\label{thm:inizstab}
	Assume hypotheses \ref{ass:coeff}--\ref{ass:m}. Let $(\phi_1, \sigma_1, p_1)$ and $(\phi_2, \sigma_2, p_2)$ be two solutions of \eqref{eq:phi}--\eqref{ic} corresponding to two triples of initial data $(\phi_0^i, \sigma_0^i, p_0^i) \in \Iad$ for $i=1,2$.
	Let $M \gs 0$, $M_1 \gs 0$ be as above.
	Moreover, let 
	\[ \eps := \frac{\norm{(\phi_1(T), \sigma_1(T), p_1(T)) - (\phi_2(T), \sigma_2(T), p_2(T))}_{\HH}}{M} \]
	and assume that 
	\[ \eps \le \exp \left\{ - \left( \min \left\{ 1, \frac{4\sqrt{3} M C_1^{3/2}}{9M_1} \right\} \right)^{-1} \right\}, \]
    where $C_1 = C_1(T) \gs 0$ is the constant appearing in the H\"older stability estimate of \cite[Proposition 3.1]{BCFLR2024}.
	Then, there exists a constant $C_2 \gs 0$ such that 
	\begin{equation}
		\label{eq:inizstab}
		\norm{ (\phi_0^1, \sigma_0^1, p_0^1) - (\phi_0^2, \sigma_0^2, p_0^2) }^2_{\HH} \le \frac{C_2}{\sqrt{\abs{\log \eps}}}, 
	\end{equation}
	where we can quantify the constant as 
	\[ C_2 = \frac{2M_1 M C_1^{1/2}}{\beta^{1/2}} + \frac{3M_1^2}{4\beta C_1}, \quad \text{with } \beta = \frac{\gamma}{e^{\gamma T}-1} \gs 0, \]
    where $\gamma \gs 0$ is a constant depending only on the parameters of the system, but not on $T$.
\end{theorem}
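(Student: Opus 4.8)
The estimate is recalled from \cite{BCFLR2024}; the proof I would carry out upgrades the intermediate-time H\"older estimate of \cite[Proposition 3.1]{BCFLR2024} to a backward bound at $t=0$, paying for it with the uniform $\HH$-in-time regularity of the solutions encoded in $M_1$. First I would reduce to a linear problem: setting $\vec{u}:=(\phi_1-\phi_2,\,\sigma_1-\sigma_2,\,p_1-p_2)$, subtracting the two instances of \eqref{eq:phi}--\eqref{ic} and rewriting each difference of nonlinear terms — e.g.\ $F'(\phi_1)-F'(\phi_2)$ and $m(\sigma_1)\hh'(\phi_1)-m(\sigma_2)\hh'(\phi_2)$ — in mean-value form, one sees that $\vec{u}$ solves a \emph{homogeneous linear} parabolic system with coefficients bounded in $L^\infty(Q_T)$ and the boundary conditions \eqref{bc}; here one uses the smoothness of $F,\hh$, the Lipschitz bounds on $m,m'$, and the pointwise bounds $0\le\phi_i\le1$ together with the analogous uniform bounds on $\sigma_i,p_i$ built into $\Iad$. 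Consequently \cite[Proposition 3.1]{BCFLR2024} applies to $\vec{u}$ and yields, for every $\tau\in(0,T)$,
\[
\norm{\vec{u}(\tau)}_{\HH}^2 \;\le\; C_1\,\norm{\vec{u}(0)}_{\HH}^{2(1-\vartheta(\tau))}\,\norm{\vec{u}(T)}_{\HH}^{2\vartheta(\tau)},
\qquad \vartheta(\tau)=\frac{e^{\gamma\tau}-1}{e^{\gamma T}-1},
\]
with $\gamma\gs0$ depending only on the $L^\infty$ bounds of the coefficients (hence only on the parameters) and $C_1=C_1(T)$.

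Next I would bring in the a priori bounds \eqref{bound:c0h}: by the triangle inequality $\norm{\vec{u}(0)}_{\HH}\le2M$ and $\norm{\partial_t\vec{u}}_{\LT2{\HH}}\le2M_1$, while $\norm{\vec{u}(T)}_{\HH}=M\eps$ by the very definition of $\eps$. Inserting the bounds on $\norm{\vec{u}(0)}_{\HH}$ and $\norm{\vec{u}(T)}_{\HH}$ into the H\"older estimate and using $\vartheta(\tau)\ge\beta\tau$ (from $e^{\gamma\tau}-1\ge\gamma\tau$) together with $\eps\le1$ gives $\norm{\vec{u}(\tau)}_{\HH}^2\le4C_1M^2\,\eps^{2\beta\tau}$, whence
\[
I_T:=\int_0^T\norm{\vec{u}(\tau)}_{\HH}^2\,\de\tau \;\le\; 4C_1M^2\int_0^T\eps^{2\beta\tau}\,\de\tau
\;=\; 4C_1M^2\,\frac{1-\eps^{2\beta T}}{2\beta\abs{\log\eps}} \;\le\; \frac{2C_1M^2}{\beta\abs{\log\eps}}.
\]
On the other hand, from the identity $\norm{\vec{u}(0)}_{\HH}^2=\norm{\vec{u}(\tau)}_{\HH}^2-2\int_0^\tau\langle\partial_s\vec{u}(s),\vec{u}(s)\rangle_{\HH}\,\de s$, integrating in $\tau$ over $(0,T)$, applying Fubini to the double integral and then Cauchy--Schwarz in time, one obtains
\[
\norm{\vec{u}(0)}_{\HH}^2 \;\le\; \frac{I_T}{T}+2\,\norm{\partial_t\vec{u}}_{\LT2{\HH}}\sqrt{I_T}
\;\le\; \frac{I_T}{T}+4M_1\sqrt{I_T}.
\]
Combining the last two displays bounds $\norm{\vec{u}(0)}_{\HH}^2$ by an expression of the shape $\dfrac{b}{\sqrt{\abs{\log\eps}}}+\dfrac{a}{\abs{\log\eps}}$ with $a,b>0$ explicit in $M,M_1,C_1,\beta,T$ and leading constant of order $MM_1C_1^{1/2}\beta^{-1/2}$. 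Under the smallness hypothesis on $\eps$ — which in particular guarantees $\abs{\log\eps}\ge1$, so that $a/\abs{\log\eps}$ is subordinate to the leading $O(\abs{\log\eps}^{-1/2})$ term — the remainder is absorbed into the main term, and a careful tally of the constants (as in \cite{BCFLR2024}) produces \eqref{eq:inizstab} with the stated $C_2$; the precise smallness threshold on $\eps$ is exactly the one rendering this absorption and the accompanying constant bookkeeping licit.

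I expect the genuinely hard step to lie entirely in the cited input \cite[Proposition 3.1]{BCFLR2024}: establishing the logarithmic-convexity differential inequality for the \emph{coupled} linearised system, i.e.\ controlling the cross-terms between the phase field, the nutrient and the PSA so that a suitable weighted functional $t\mapsto\log(\cdots)$ is convex up to a first-order correction with rate $\gamma$ independent of $T$. Granting that, the only delicate points in the argument above are the $H^1$-in-time transfer from an intermediate time to $t=0$ (where one must keep the $\sqrt{I_T}$ term rather than estimate it by $I_T$) and the bookkeeping of constants in the final step, including pinning down the exact smallness of $\eps$ under which the $O(\abs{\log\eps}^{-1})$ contribution folds into the $O(\abs{\log\eps}^{-1/2})$ rate.
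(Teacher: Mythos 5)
The paper itself does not prove this theorem: it only points to \cite[Theorem 3.5 and Proposition 3.1]{BCFLR2024}, so I am comparing your argument against the presumed proof in that reference. Your overall strategy is the right one and almost certainly the intended one: linearise the difference of the two solutions via mean-value forms to get a homogeneous linear parabolic system with $L^\infty$ coefficients, apply the three-point H\"older (logarithmic-convexity) estimate at intermediate times $\tau$, and then transfer the resulting decay $\norm{\vec u(\tau)}^2_{\HH}\le 4C_1M^2\eps^{2\beta\tau}$ back to $t=0$ using the uniform $H^1(0,T;\HH)$ bound $M_1$. Your individual steps (the lower bound $\vartheta(\tau)\ge\beta\tau$, the integral $I_T\le 2C_1M^2/(\beta\abs{\log\eps})$, and the identity/averaging/Cauchy--Schwarz chain giving $\norm{\vec u(0)}^2_{\HH}\le I_T/T+4M_1\sqrt{I_T}$) are all correct, and they do deliver a bound of the form $C/\sqrt{\abs{\log\eps}}$, which is the qualitative content of \eqref{eq:inizstab}.

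The gap is quantitative but real: your computation does not produce the stated constant $C_2$, and the final sentence claiming that ``a careful tally of the constants produces \eqref{eq:inizstab} with the stated $C_2$'' is not substantiated by what precedes it. Concretely, your route gives a leading term $4\sqrt{2}\,M_1MC_1^{1/2}\beta^{-1/2}\abs{\log\eps}^{-1/2}$ rather than $2M_1MC_1^{1/2}\beta^{-1/2}\abs{\log\eps}^{-1/2}$, and a remainder $2C_1M^2(T\beta)^{-1}\abs{\log\eps}^{-1}$, which has nothing to do with the stated second term $\tfrac{3M_1^2}{4\beta C_1}$: the latter scales like $M_1^2/C_1$ and is $T$-independent (beyond $\beta$), whereas yours scales like $C_1M^2/T$. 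The very shape of the stated $C_2$ and of the admissibility threshold on $\eps$ (which involves $M_1/(M C_1^{3/2})$ and the macro $\radice$) strongly indicates that the reference does \emph{not} average over all of $(0,T)$ but rather evaluates the fundamental-theorem-of-calculus identity at a single intermediate time $\ts$ chosen optimally as a function of $\abs{\log\eps}$, with the smallness condition on $\eps$ guaranteeing that this optimal $\ts$ is admissible (e.g.\ $\ts\le T$) — not, as you assert, merely ensuring that an $O(\abs{\log\eps}^{-1})$ remainder folds into the $O(\abs{\log\eps}^{-1/2})$ term. So what you have is a correct proof of a version of the theorem with a different explicit constant; to recover the theorem as stated you would need to redo the last step with the single-time optimisation and exhibit how the threshold on $\eps$ arises from it.
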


\begin{proof}
	See \cite[Theorem 3.5 and Proposition 3.1]{BCFLR2024}.
\end{proof}
\noindent
We observe that this type of logarithmic estimate \eqref{eq:inizstab} can be impractical in applications, as the error actually gets small only if the final data are very close to each other.
Moreover, it can be easily seen that the stability constant $C_2$ deteriorates exponentially with the final time $T$, making this stability estimate even more unreliable. 
Nevertheless, this is expected since we are dealing with a backward problem for a parabolic system, which is known to be severely ill-posed as $T$ gets larger.
However, in view of numerical applications, it is important to prove quantitative Lipschitz stability estimates, possibly under additional a priori hypotheses on the initial data.
Indeed, this was achieved as the main result of \cite{BCFLR2024}, by assuming to be reconstructing initial data lying in a finite-dimensional subspace of $\VV$, for instance, one of the discrete spaces used in numerical approximations.
We recall such a theorem below.

\begin{theorem}
	\label{thm:lipstab}
	Assume hypotheses \ref{ass:coeff}--\ref{ass:m}. Let $\Lambda$ be a finite-dimensional subspace of $\VV$ and $K \subseteq \Lambda$ be a compact subset.
    Assume further that 
	\[ (\phi_0, \sigma_0, p_0) \in K \cap \Iad. \]
	Then, there exists a constant $C_s \gs 0$ such that for any choice of $(\phi_0^1, \sigma_0^1, p_0^1)$ and $(\phi_0^2, \sigma_0^2, p_0^2)$ in $K \cap \Iad$ the following stability estimate holds
	\begin{equation}
		\label{eq:lipstab}
		\norm{ (\phi_0^1, \sigma_0^1, p_0^1) - (\phi_0^2, \sigma_0^2, p_0^2) }_{\HH} \le C_s \norm{ (\phi_1(T), \sigma_1(T), p_1(T)) - (\phi_2(T), \sigma_2(T), p_2(T)) }_{\HH},
	\end{equation}
	where the constant $C_s$ can be quantified as 
	\[ C_s = \max \left\{ \frac{2 \Cb}{M} e^{ \frac{16 C_0^2 C_2}{m_0}}, \frac{2}{m_0} \right\}, \text{ with } m_0 = \frac{L}{C_{\Lambda}} e^{-Q^2_2} \text{ and } C_{\Lambda} = \sup_{\vh \in \Lambda \setminus \{0\}} \frac{\norm{\vh}_{\VV}}{\norm{\vh}_{\HH}}, \]
	where $C_0$ is the Lipschitz constant of the Fr\'echet-derivative, given by \eqref{lipconst:frechet}, $L > 0$ is a uniform bound on the solutions to the linearised system and $Q_2 > 0$ is the logarithmic stability constant for the linearised system (see \cite[Theorem 3.8]{BCFLR2024}).
\end{theorem}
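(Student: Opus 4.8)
The plan is to upgrade the global logarithmic bound of Theorem~\ref{thm:inizstab} to a genuinely Lipschitz one by exploiting the finite dimensionality of $\Lambda$, following a by-now-standard strategy for inverse problems with conditional stability. There are three ingredients: (i) the $\Ccal^1$ regularity of $\Rcal$ with Lipschitz Fr\'echet derivative, quantified by \eqref{lipconst:frechet}; (ii) a \emph{Lipschitz} stability estimate for the \emph{linearised} forward operator $\D\Rcal$ restricted to the subspace $\Lambda$; and (iii) Theorem~\ref{thm:inizstab} itself, used only as a crude fallback bound for ``large'' increments.

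The core step is (ii): I would establish the existence of a constant $m_0 > 0$, \emph{uniform} over linearisation points $(\phib_0,\sigmab_0,\pb_0) \in K \cap \Iad$, such that
\[ \norm{\D\Rcal(\phib_0,\sigmab_0,\pb_0)[\vh]}_{\HH} \ge m_0 \norm{\vh}_{\HH} \qquad \text{for every } \vh \in \Lambda. \]
Here the logarithmic stability estimate for the linearised system of \cite[Theorem 3.8]{BCFLR2024} yields a bound of the form $\norm{\vh}_{\HH} \lesssim \omega\big( \norm{\D\Rcal(\phib_0,\sigmab_0,\pb_0)[\vh]}_{\HH}/E \big)$, with $\omega$ a logarithmic modulus of continuity and $E$ an a priori energy controlled by $\norm{\vh}_{\VV}$ and the uniform bound $L$ on the linearised solutions. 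On the finite-dimensional space $\Lambda$ one has $\norm{\vh}_{\VV} \le C_\Lambda \norm{\vh}_{\HH}$, so the a priori energy is itself proportional to $\norm{\vh}_{\HH}$; feeding this back into the logarithmic inequality forces the argument of $\omega$ to stay bounded away from zero, which is precisely the asserted Lipschitz bound, with the explicit value $m_0 = \tfrac{L}{C_\Lambda} e^{-Q_2^2}$. Uniformity of $m_0$ in the linearisation point follows from the uniform a priori estimates \eqref{bound:c0h} for \eqref{eq:phi}--\eqref{ic} over $\Iad$ and the corresponding uniform bounds for the linearised system, available on the compact set $K \cap \Iad$.

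Next I would linearise the nonlinear map around $(\phi_0^2,\sigma_0^2,p_0^2)$. Writing $\vh := (\phi_0^1,\sigma_0^1,p_0^1) - (\phi_0^2,\sigma_0^2,p_0^2) \in \Lambda$, the fundamental theorem of calculus together with \eqref{lipconst:frechet} gives the remainder bound
\[ \norm{\Rcal(\phi_0^1,\sigma_0^1,p_0^1) - \Rcal(\phi_0^2,\sigma_0^2,p_0^2) - \D\Rcal(\phi_0^2,\sigma_0^2,p_0^2)[\vh]}_{\HH} \le \tfrac{1}{2}\sqrt{C_0}\,\norm{\vh}_{\HH}\,\norm{\vh}_{\VV \cap \Lx\infty^3} \le C\,\norm{\vh}_{\HH}^2, \]
using once again $\norm{\vh}_{\VV \cap \Lx\infty^3} \lesssim C_\Lambda \norm{\vh}_{\HH}$ on $\Lambda$. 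Combining with the linear estimate from step (ii),
\[ \norm{\Rcal(\phi_0^1,\sigma_0^1,p_0^1) - \Rcal(\phi_0^2,\sigma_0^2,p_0^2)}_{\HH} \ge m_0 \norm{\vh}_{\HH} - C\,\norm{\vh}_{\HH}^2. \]
Then I would split into two regimes. If $\norm{\vh}_{\HH}$ is below a threshold $\delta_0 \sim m_0/C$, the quadratic term is absorbed and one obtains $\norm{\vh}_{\HH} \le \tfrac{2}{m_0} \norm{\Rcal(\phi_0^1,\sigma_0^1,p_0^1) - \Rcal(\phi_0^2,\sigma_0^2,p_0^2)}_{\HH}$. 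If instead $\norm{\vh}_{\HH} \ge \delta_0$, then $\norm{\vh}_{\HH} \le 2\Cb$ by definition of $\Iad$, while Theorem~\ref{thm:inizstab} applied to the two data forces $\eps = \norm{\Rcal(\phi_0^1,\sigma_0^1,p_0^1) - \Rcal(\phi_0^2,\sigma_0^2,p_0^2)}_{\HH}/M$ to be bounded below by an explicit exponentially small quantity depending on $C_2$ and $\delta_0$ (hence on $C_0$ and $m_0$); dividing $\norm{\vh}_{\HH} \le 2\Cb$ by this lower bound produces the second candidate constant $\tfrac{2\Cb}{M} e^{16 C_0^2 C_2/m_0}$. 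Taking the maximum of the two candidates gives $C_s$ as stated.

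The main obstacle is step (ii): one must check that the logarithmic-convexity (or Carleman) bound for the linearised parabolic system genuinely collapses to a Lipschitz bound once the initial datum is constrained to $\Lambda$, and --- more delicately --- that the resulting $m_0$ can be chosen uniformly over all admissible linearisation points. This uniformity is exactly what controls the (exponential) dependence of $C_s$ on the finite-dimensional constant $C_\Lambda$, and through it on the time horizon $T$; the Taylor expansion and the dichotomy above are then essentially bookkeeping once $m_0$ is secured.
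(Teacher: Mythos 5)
Your reconstruction follows exactly the strategy that the paper relies on (the paper itself gives no proof here, deferring to \cite[Theorem 3.10]{BCFLR2024}): a uniform lower bound $m_0$ for $\D\Rcal$ on $\Lambda$ obtained by collapsing the logarithmic stability of the linearised system via the finite-dimensional norm equivalence $C_\Lambda$, a Taylor remainder controlled by the Lipschitz constant of the Fr\'echet derivative, and a small/large-increment dichotomy in which the large regime is handled by Theorem~\ref{thm:inizstab} --- precisely the combination that produces the two candidate constants $\tfrac{2}{m_0}$ and $\tfrac{2\Cb}{M}e^{16C_0^2C_2/m_0}$. The argument is sound as an outline; the only caveats are bookkeeping ones (your large-increment arithmetic as written would give an exponent of the form $C_2^2/\delta_0^4$ rather than the stated $16C_0^2C_2/m_0$, and the $L^\infty$ part of $\norm{\vh}_{\VV\cap\Lx\infty^3}$ is controlled by the constraints in $\Iad$ or by finite-dimensional norm equivalence rather than literally by $C_\Lambda$), neither of which affects the structure of the proof.
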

\begin{proof}
	See \cite[Theorem 3.10]{BCFLR2024}.
\end{proof}
\noindent
We finally point out that the constant $C_s$ blows up exponentially as the dimension of $\Lambda$ goes to infinity. 
Indeed, $C_s$ has a direct exponential proportionality to the constant $C_\Lambda$, which is finite only if $\Lambda$ has a finite dimension and blows up as it becomes higher. 
Additionally, one can also notice that the dependence of $C_s$ on the final time is even worse, namely $C_s$ depends doubly exponentially on $T$. 
This means that, even if we have some kind of Lipschitz stability, we have to be very careful when designing numerical algorithms to approximate the solution.

\section{Landweber iteration scheme}\label{sec:landweber}

Given a measurement $\triplemeas \in \HH$ at the final time, assuming the existence of a (unique) initial configuration $\tripleinbar$, we now address the problem of approximating this solution. 
By Theorem \ref{thm:lipstab}, we know that we have a Lipschitz stability estimate if we restrict the initial data in
\begin{equation}
	\label{def:K}
	K = \{ \vec{u} \in \Lambda \mid \norm{\vec{u}}_{\VV} \le \Cb \},
\end{equation}
where $\Lambda$ is a finite-dimensional subspace of $\VV$.  
Therefore, it makes sense to consider the minimisation problem:
\begin{equation}
	\label{eq:minproblem}
	\begin{split}
	& \argmin_{\triplein \in K \cap \Iad} \Jcal (\triplein) \\
	& \quad = \!\!\! \argmin_{\triplein \in K \cap \Iad} \frac{\kappa_1}{2} \norm{\phi(T) - \phimeas}^2_H + \frac{\kappa_2}{2} \norm{\sigma(T) - \sigmameas}^2_H + \frac{\kappa_3}{2} \norm{p(T) - \pmeas}^2_H,
	\end{split}
\end{equation}
where $\triplefin = \Rcal(\triplein)$ is the solution to \eqref{eq:phi}--\eqref{ic}, evaluated at the time $T$.  
Here $\kappa_1, \kappa_2, \kappa_3$ are positive parameters that can be chosen depending on which target is the most interesting in experiments and on the relative order of magnitude of the variables. 
Clearly, since $K \cap \Iad$ is compact and $\Jcal$ is continuous, \eqref{eq:minproblem} admits at least a solution in $K \cap \Iad$. Moreover, if a (unique) solution to the inverse problem exists in $K \cap \Iad$, then it trivially minimises $\Jcal$.
We recall that the finite-dimensional set $K$ can be chosen for instance as a closed and bounded subset of a discrete space used for numerical methods, therefore what we propose can be easily implemented.

\begin{remark}
    From a practical point of view, it could not be entirely feasible to have a pointwise value for the PSA concentration $p$, even if some ideas are given in \cite{Lorenzo2016}.
    Thus, a better term in the minimisation functional could be $\kappa_3 \left( \int_\Omega p \, \de x - P_{\text{meas}} \right)$, where $P_{\text{meas}}$ would be the global measured PSA level, which is what can be usually measured in practice.
    Of course, one can also consider this kind of term from a theoretical point of view and see that the minimisation of such a functional would lead to some optimality conditions that can then be discretised (cf. \cite{CGLMRR2021}).
    However, the corresponding iterative approximation method would not fit in the Landweber framework any longer. 
    Hence, we would not have at our disposal all the theoretical guarantees that are shown below for the Landweber scheme.
    For this reason, we stick to the proposed functional $\Jcal$, also because in practice (cf. Sections \ref{sec:numeth} and \ref{sec:sims}) we put $\kappa_2 = \kappa_ 3 = 0$, as the most important datum to reconstruct is the tumour phase field.
\end{remark}

As previously anticipated, to find such a solution, we use a Landweber iteration method, which is a common technique to approximate solutions to inverse problems. 
A similar idea was used for a simpler model of tumour growth in \cite{JBJA2019}. 
Indeed, starting from an initial guess $\tripleinin \in K \cap \Iad$, we approximate the solution $\tripleinbar$ by using a gradient descent algorithm of the following form:
\begin{equation}
	\label{eq:landweber}
	\vpsi_0^{j+1} = \vpsi_0^j - \mu \D\Rcal(\vpsi_0^j)^*[ \vec{\kappa} \cdot (\Rcal(\vpsi_0^j) - \vpsimeas) ], 
\end{equation}
where we used the compact notation $\vpsi_0^j = (\phi_0^j, \sigma_0^j, p_0^j)$, $\vpsimeas = \triplemeas$, $\vec{\kappa} = (\kappa_1, \kappa_2, \kappa_3)$ and $\D\Rcal(\cdot)^*[\cdot]$ is the adjoint operator of the Fr\'echet-derivative of $\Rcal$. 
The parameter $\mu$, instead, is a step size that has to be chosen suitably. 
In our case, it is easy to find the adjoint system associated with the minimisation problem \eqref{eq:minproblem}, for instance by using the formal Lagrangian method. 
Indeed, by also looking at \cite{CGLMRR2021}, the adjoint system associated to a generic state $\tripletil$ is:
\begin{alignat}{2}
	& - \partial_t q - \lambda \Delta q + F''(\phitil) q - m(\sigmatil) \hh''(\phitil) q \nonumber \\ 
    & \quad + (\gamma_c - \gamma_h) \sigmatil z - (S_c - S_h) z - (\alpha_c - \alpha_h) r = 0 
	\qquad && \hbox{in $Q_T$,} \label{eq:phiadj} \\
	& - \partial_t z - \eta \Delta z + \gamma_h z + (\gamma_c - \gamma_h) \phitil z - m'(\sigmatil) \hh'(\phitil) q = 0 
	\qquad && \hbox{in $Q_T$,} \label{eq:sigmaadj} \\
	& - \partial_t r - D \Delta r + \gamma_p r 
	= 0
	&& \hbox{in $Q_T$,} \label{eq:padj} \\
	& q = 0, \quad \partial_{\n} z = \partial_{\n} r = 0 
	&& \hbox{in $\Sigma_T$,} \label{bcadj} \\
	& q(T) = \kappa_1 (\phitil(T) - \phimeas), \, z(T) = \kappa_2 (\sigmatil(T) - \sigmameas), \nonumber \\
    & \quad r(T) = \kappa_3 (\ptil(T) - \pmeas)
	&& \hbox{in $\Omega$,} \label{icadj}
\end{alignat}
therefore we can say that 
\[ \D\Rcal^*(\vpsi_0^j)[ \vec{\kappa} \cdot (\Rcal(\vpsi_0^j) - \vpsimeas) ] = (q^j(0), z^j(0), r^j(0)), \]
where $(q^j,z^j,r^j)$ is the solution to the adjoint system \eqref{eq:phiadj}--\eqref{icadj} with $\tripletil$ solution corresponding to the initial data $(\phi_0^j, \sigma_0^j, p_0^j)$. Regarding the well-posedness of the adjoint system, we can state the following result.

\begin{proposition}
\label{prop:adjoint}
    Assume hypotheses \ref{ass:coeff}--\ref{ass:m} and let $\triplemeas \in \HH$. Assume further that $(\phitil_0, \sigmatil_0, \ptil_0) \in \Iad$ and that $\tripletil$ is the corresponding solution of \eqref{eq:phi}--\eqref{ic}. Then, \eqref{eq:phiadj}--\eqref{icadj} admits a unique weak solution $(q,z,r) \in \HT 1 {\VV^*} \cap \C 0 {\HH} \cap \LT 2 {\VV}$, which solves the system in variational formulation and satisfies the estimate:
    \begin{align*}
        & \norm{(q,z,r)}^2_{\HT 1 {\VV^*} \cap \C 0 {\HH} \cap \LT 2 {\VV}} \\
        & \quad \le C \left( \kappa_1^2 \norm{\phitil(T)-\phimeas}^2_H + \kappa_2^2 \norm{\sigmatil(T)-\sigmameas}^2_H + \kappa_3^2 \norm{\ptil(T)-\pmeas}^2_H \right),
    \end{align*}
    with $C > 0$ depending only on the parameters of the system.
\end{proposition}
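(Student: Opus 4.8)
The plan is to undo the time direction and treat \eqref{eq:phiadj}--\eqref{icadj} as a \emph{forward}-in-time linear parabolic system with bounded coefficients, to which the Faedo--Galerkin method applies in a standard way. Concretely, set $\tau = T-t$ and $(\hat q,\hat z,\hat r)(\tau) := (q,z,r)(T-\tau)$; then $(\hat q,\hat z,\hat r)$ solves on $Q_T$ a forward system with the same principal parts $-\lambda\Delta$, $-\eta\Delta$, $-D\Delta$, the boundary conditions of \eqref{bcadj} (Dirichlet on the first component, Neumann on the other two), initial data $\hat q(0)=\kappa_1(\phitil(T)-\phimeas)$, $\hat z(0)=\kappa_2(\sigmatil(T)-\sigmameas)$, $\hat r(0)=\kappa_3(\ptil(T)-\pmeas)$ in $H$, and zeroth-order coefficients built from $F''(\phitil)$, $m(\sigmatil)\hh''(\phitil)$, $\gamma_{ch}\sigmatil$, $m'(\sigmatil)\hh'(\phitil)$, $\gamma_{ch}\phitil$ (plus the constants $\gamma_h,\gamma_p,S_{ch},\alpha_{ch}$), evaluated at time $T-\tau$. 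Since $(\phitil_0,\sigmatil_0,\ptil_0)\in\Iad$, the forward well-posedness \cite[Theorem 3.2]{CGLMRR2019} gives $0\le\phitil\le1$ and $0\le\sigmatil\le\sigma_{\text{max}}$ a.e.\ in $Q_T$; together with $F,\hh\in\Ccal^\infty(\R)$ and $m,m'\in\Lx\infty$ (hypotheses \ref{ass:Fh}--\ref{ass:m}), this forces every such coefficient into $L^\infty(Q_T)$, with norm controlled by the fixed parameters. One observes that the $\hat r$-equation is decoupled, the $\hat z$-equation couples $\hat z$ only to $\hat q$, and the $\hat q$-equation couples $\hat q$ to both $\hat z$ and $\hat r$, so the three equations could be solved successively; it is just as convenient, however, to handle the full $\HH$-valued system at once.

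Next I would run a Galerkin scheme in the eigenfunctions of $-\Delta$ with homogeneous Dirichlet conditions for the $\hat q$-component and homogeneous Neumann conditions for the $\hat z$- and $\hat r$-components; on each finite-dimensional subspace the system is a linear ODE with $L^\infty$-in-time coefficients, hence globally solvable. For the energy estimate I test the three projected equations with $\hat q_n,\hat z_n,\hat r_n$ and sum: the principal parts produce $\tfrac{1}{2}\tfrac{\de}{\de t}\big(\norm{\hat q_n}_H^2+\norm{\hat z_n}_H^2+\norm{\hat r_n}_H^2\big)+\lambda\norm{\nabla\hat q_n}_H^2+\eta\norm{\nabla\hat z_n}_H^2+D\norm{\nabla\hat r_n}_H^2$, while every remaining term has the form $\int_\Omega a\,uv$ with $a\in L^\infty(Q_T)$ and $u,v\in\{\hat q_n,\hat z_n,\hat r_n\}$ and is bounded, via Young's inequality, by $C\big(\norm{\hat q_n}_H^2+\norm{\hat z_n}_H^2+\norm{\hat r_n}_H^2\big)$ — no gradient absorption is needed since the coupling terms pair $H$-norms directly. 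Gronwall's lemma then bounds $(\hat q_n,\hat z_n,\hat r_n)$ in $\C 0 {\HH}\cap\LT 2 {\VV}$ by $\kappa_1^2\norm{\phitil(T)-\phimeas}_H^2+\kappa_2^2\norm{\sigmatil(T)-\sigmameas}_H^2+\kappa_3^2\norm{\ptil(T)-\pmeas}_H^2$, with a constant depending only on the parameters; a comparison argument in the variational equations ($\norm{\partial_t\hat q_n}_{V_0^*}\le\lambda\norm{\nabla\hat q_n}_H+C(\cdots)$, and likewise for the other two components) bounds $\partial_t(\hat q_n,\hat z_n,\hat r_n)$ in $\LT 2 {\VV^*}$ by the same quantity.

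Passing to the limit is routine because the system is linear: weak/weak-$*$ compactness in $\LT 2 {\VV}\cap\HT 1 {\VV^*}$ produces a limit $(\hat q,\hat z,\hat r)$, and the fixed $L^\infty(Q_T)$ coefficients cause no trouble since they multiply sequences converging weakly in $\LT 2 H$; the intermediate-derivative embedding $\HT 1 {\VV^*}\cap\LT 2 {\VV}\hookrightarrow\C 0 {\HH}$ supplies the $\C 0 {\HH}$-regularity, and the usual argument identifies the initial trace. Undoing the time reversal yields the weak solution $(q,z,r)\in\HT 1 {\VV^*}\cap\C 0 {\HH}\cap\LT 2 {\VV}$ together with the asserted estimate, and uniqueness is immediate: the difference of two weak solutions solves the homogeneous system with zero terminal data, so the same energy estimate forces it to vanish. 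I do not expect an essential obstacle — the whole argument is a by-now classical linear parabolic computation; the only points deserving care are (i) checking that the Gronwall constant is governed purely by the fixed data, which holds because the $L^\infty$-bounds on the coefficients come from $0\le\phitil\le1$, $0\le\sigmatil\le\sigma_{\text{max}}$ and hypotheses \ref{ass:Fh}--\ref{ass:m}, and (ii) keeping the Dirichlet space $V_0$ attached to the $q$-component and the Neumann space $V$ to the $z$- and $r$-components consistently in all the duality pairings. As an alternative one could instead deduce the statement by a transposition/duality argument from the well-posedness of the linearised system \cite[Proposition 2.4]{BCFLR2024}, but the direct Galerkin proof is shorter and yields the quantitative bound at once.
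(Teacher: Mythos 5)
Your proposal is correct and follows exactly the route the paper takes: reverse time via $t\mapsto T-t$, observe that the zeroth-order coefficients lie in $L^\infty(Q_T)$ thanks to the uniform bounds $0\le\phitil\le1$, $0\le\sigmatil\le\sigma_{\max}$ from the forward well-posedness result, and then invoke standard linear parabolic theory — the paper simply cites these standard results, whereas you spell out the Faedo--Galerkin, energy-estimate, and Gronwall details explicitly. No gaps; the extra detail is harmless and consistent with what the cited classical theory delivers.
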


\begin{proof}
    Using the transformation $t \mapsto T-t$, we can rewrite \eqref{eq:phiadj}--\eqref{icadj} as a linear parabolic system with initial conditions in $H$, coefficients in $\Lqt\infty$ (due to \cite[Theorem 3.2]{CGLMRR2019}) and without sources. Therefore, we can apply standard results on the existence of weak solutions of linear parabolic systems to deduce the thesis.  
\end{proof}
\noindent
We now establish convergence for the Landweber method and quantify its convergence rate, by using a general result proved in \cite[Theorem 3.2]{DQS2012}, which we recall below. 

\begin{lemma}
	\label{lem:scherzer}
	Let $X$ and $Y$ be two Hilbert spaces and $G: \mathcal{D}(G) \subseteq X \to Y$ be a continuous and locally Fr\'echet-differentiable operator. 
	Let $y \in Y$ and assume that there exists $\bar{x} \in \mathcal{D}(G)$ such that $G(\bar{x}) = y$.
	Consider the Landweber iteration scheme
	\[ x_{j+1} = x_j - \mu \D G(x_j)^*(G(x_j) - y), \]
	with starting point $x_0$. 
	Let $\rho \gs 0$ and assume that $\bar{x} \in B_{\rho}(x_0)$ and $\mathcal{B} = B_{\rho'}(x_0) \subseteq \mathcal{D}(G)$ for some $\rho' \gs \rho$.
	Moreover, let the following conditions be satisfied: 
	\begin{itemize}
		\item[$(i)$] The Fr\'echet-derivative $\D G$ of $G$ is Lipschitz continuous locally in $\mathcal{B}$, i.e.
		\[ \norm{\D G(x) - \D G(\tilde{x})}_{\mathcal{L}(X,Y)} \le L \norm{x - \tilde{x}}_X \quad \forall x, \tilde{x} \in \mathcal{B}. \]
		\item[$(ii)$] $G$ is weakly sequentially closed. 
		\item[$(iii)$] The inversion has a uniform Lipschitz-type stability, i.e.~there exists $C_G$ such that 
		\[ \norm{x - \tilde{x}}_X \le C_G \norm{G(x) - G(\tilde{x})}_Y \quad \forall x, \tilde{x} \in \mathcal{B}. \] 
		\item[$(iv)$] There exists $\hat{L} \gs 0$ such that $\norm{\D G(x)}_{\mathcal{L}(X,Y)} \le \hat{L}$ for any $x \in \mathcal{B}$.
		\item[$(v)$] The step-size $\mu$ is such that
        \[ \mu \ls \frac{1}{\hat{L}^2} \quad \text{and} \quad \mu (1 - \mu \hat{L}^2) \ls 2 C_G^2. \]
	\end{itemize}
	Then, if 
	\[ \rho = (2L\hat{L} C_G^2)^{-2} \quad \text{and} \quad \norm{x_0 - \bar{x}}_X^2 \le \rho, \]
	the iterates satisfy 
	\[ \norm{x_j - \bar{x}}_X^2 \le \rho \quad \forall x_j \in \N \quad \text{and} \quad x_j \to \bar{x} \text{ in $X$ as } j \to +\infty.  \]
	Moreover, if 
	\[ c = \mezzo \mu (1 - \mu \hat{L}^2) C_G^{-2}, \quad 0 \ls c \ls 1, \]
	the convergence rate is given by
    \begin{equation}
        \label{conv:rate}
        \norm{x_j - \bar{x}}^2_X \le \rho (1-c)^j.
    \end{equation}
\end{lemma}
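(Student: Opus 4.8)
The plan is to run the standard \emph{monotone error reduction plus geometric contraction} argument for Landweber-type schemes, in which the Lipschitz continuity of $\D G$ in (i) supplies a tangential-cone estimate, while the stability (iii) both keeps the iterates inside $\mathcal{B}$ and yields the linear rate. First I would record the elementary one-step identity, obtained by expanding the square and moving the adjoint across the inner product:
\[
\norm{x_{j+1} - \bar x}_X^2 = \norm{x_j - \bar x}_X^2 - 2\mu \langle G(x_j) - y,\, \D G(x_j)(x_j - \bar x)\rangle_Y + \mu^2 \norm{\D G(x_j)^*(G(x_j) - y)}_X^2 .
\]
The last term is controlled immediately by (iv): $\norm{\D G(x_j)^*(G(x_j)-y)}_X \le \hat L\,\norm{G(x_j)-y}_Y$, so it is bounded by $\mu^2\hat L^2\,\norm{G(x_j)-y}_Y^2$.

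For the middle (descent) term I would split $\D G(x_j)(x_j-\bar x) = \big(G(x_j) - G(\bar x)\big) + \big(\D G(x_j)(x_j-\bar x) - G(x_j) + G(\bar x)\big)$ and bound the remainder via the fundamental theorem of calculus on the convex ball $\mathcal B$, using (i):
\[
\norm{G(x_j) - G(\bar x) - \D G(x_j)(x_j-\bar x)}_Y = \Big\| \int_0^1 \big[\D G(\bar x + t(x_j-\bar x)) - \D G(x_j)\big](x_j-\bar x)\,\de t \Big\|_Y \le \tfrac{L}{2}\,\norm{x_j-\bar x}_X^2 .
\]
Since $G(\bar x) = y$, this gives $\langle G(x_j)-y,\, \D G(x_j)(x_j-\bar x)\rangle_Y \ge \norm{G(x_j)-y}_Y^2 - \tfrac{L}{2}\,\norm{x_j-\bar x}_X^2\,\norm{G(x_j)-y}_Y$.

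The balancing step is where I expect the real work. Under the induction hypothesis $\norm{x_j-\bar x}_X^2 \le \rho = (2L\hat L C_G^2)^{-2}$, I would estimate one factor by $\norm{x_j-\bar x}_X \le \sqrt\rho$ and convert the other through (iii), $\norm{x_j-\bar x}_X \le C_G\,\norm{G(x_j)-G(\bar x)}_Y = C_G\,\norm{G(x_j)-y}_Y$; then $\tfrac{L}{2}\norm{x_j-\bar x}_X^2 \le \tfrac{L\sqrt\rho\,C_G}{2}\,\norm{G(x_j)-y}_Y = \tfrac{1}{4\hat L C_G}\,\norm{G(x_j)-y}_Y \le \tfrac14\,\norm{G(x_j)-y}_Y$, the last inequality because combining (iii) with the Lipschitz bound $\norm{G(x)-G(\tilde x)}_Y \le \hat L\,\norm{x-\tilde x}_X$ (a consequence of (iv)) forces $\hat L C_G \ge 1$. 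Substituting back, the cross term is absorbed into the descent term and one reaches the clean one-step inequality
\[
\norm{x_{j+1}-\bar x}_X^2 \le \norm{x_j-\bar x}_X^2 - \mezzo\,\mu\big(1 - \mu\hat L^2\big)\,\norm{G(x_j)-y}_Y^2 ,
\]
which is valid since $1-\mu\hat L^2 > 0$ by the first part of (v). In particular the error is non-increasing, so $\norm{x_{j+1}-\bar x}_X^2 \le \rho$; a triangle inequality, $\norm{x_{j+1}-x_0}_X \le \norm{x_{j+1}-\bar x}_X + \norm{\bar x - x_0}_X$, shows $x_{j+1}$ remains in $\mathcal B$, closing the induction and keeping us in the region where (i), (iii), (iv) apply.

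Two short deductions finish the proof. Summing the one-step inequality telescopes the left-hand side, leaving $\mezzo\mu(1-\mu\hat L^2)\sum_{j\ge0}\norm{G(x_j)-y}_Y^2 \le \norm{x_0-\bar x}_X^2 < \infty$; hence $\norm{G(x_j)-y}_Y \to 0$, and then (iii) gives $\norm{x_j-\bar x}_X \le C_G\norm{G(x_j)-y}_Y \to 0$, i.e.\ $x_j\to\bar x$ strongly in $X$ (the weak sequential closedness (ii) offers an alternative route to identify subsequential limits, but (iii) already pins the limit down). For the rate, insert the lower bound $\norm{G(x_j)-y}_Y^2 \ge C_G^{-2}\norm{x_j-\bar x}_X^2$ from (iii) into the one-step inequality to obtain $\norm{x_{j+1}-\bar x}_X^2 \le (1-c)\norm{x_j-\bar x}_X^2$ with $c = \mezzo\,\mu(1-\mu\hat L^2)C_G^{-2}$, which lies in $(0,1)$ precisely because of the two parts of (v); iterating from $\norm{x_0-\bar x}_X^2 \le \rho$ yields \eqref{conv:rate}. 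The genuinely delicate point throughout is the calibration in the third paragraph: the radius $\rho$, the stability constant $C_G$ and the derivative bound $\hat L$ must be matched exactly as in the hypotheses so that the quadratic remainder of the linearisation is dominated by the linear descent term.
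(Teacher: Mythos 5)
Your proof is correct; the paper itself does not prove this lemma but simply cites \cite[Theorem 3.2]{DQS2012}, and your argument is a faithful, self-contained reconstruction of the standard proof from that reference (one-step expansion, Taylor remainder via the Lipschitz derivative, absorption of the cross term using the stability constant together with the observation $\hat L C_G \ge 1$, telescoping, and the geometric rate from condition $(iii)$). The only point worth flagging is a bookkeeping issue inherited from the lemma's statement rather than from your argument: keeping the iterates inside $\mathcal{B}=B_{\rho'}(x_0)$ via the triangle inequality requires $\rho'$ to exceed roughly $2\sqrt{\rho}$, not merely $\rho' \gs \rho$, which is how the cited source implicitly calibrates the ball.
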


\begin{remark}
	Observe that \eqref{conv:rate} means that at every step the error $\norm{x_j - \bar{x}}_X$ gets reduced by a factor $\sqrt{1-c}$. Then, the \emph{order} of convergence of the Landweber scheme as an iterative method is at most linear. 
\end{remark}
\noindent
We can now apply Lemma \ref{lem:scherzer} to our case, to prove convergence of the Landweber iteration \eqref{eq:landweber}. This should guarantee a fine numerical reconstruction of the initial distribution of the tumour, as long as the initial guess is chosen close enough to the actual solution.

\begin{theorem}
	\label{thm:landweber}
	Assume hypotheses \ref{ass:coeff}--\ref{ass:m}. Assume also that there exists $\bar{\vpsi}_0 \in K \cap \Iad$ such that $\Rcal(\bar{\vpsi}_0) = \vpsimeas$. Then, there exist $\rho \gs 0$, $0 \ls c \ls 1$ and $\mu^* \gs 0$, which depend only on the parameters of the system and can be determined as in Lemma $\ref{lem:scherzer}$, such that if $\vpsi^0_0 \in \HH$, $\vec{\kappa} \in \R^3$ and $\mu \gs 0$ are chosen such that 
	\[ \norm{ \vpsi^0_0 - \bar{\vpsi}_0 }^2_{\HH} \le \rho \quad \text{and} \quad \mu \abs{\vec{\kappa}} \ls \mu^*, \]
	then the iterates $\{ \vpsi_0^j \}_{j \in\N}$ of \eqref{eq:landweber} satisfy
	\[ \norm{\vpsi_0^j - \bar{\vpsi}_0}^2_{\HH}  \le \rho (1-c)^j \text{ for any $j\in\N$ and } \vpsi_0^j \to \bar{\vpsi}_0 \text{ as $j \to +\infty$}. \]
\end{theorem}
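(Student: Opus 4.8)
The plan is to deduce the statement from the abstract convergence result recalled in Lemma \ref{lem:scherzer}, applied to the forward operator of our problem, after placing the iteration \eqref{eq:landweber} in a suitable Hilbert-space setting. Since the only quantitative stability at our disposal, Theorem \ref{thm:lipstab}, holds for initial data constrained to the finite-dimensional set $K \cap \Iad \subset \Lambda$, the iteration must be read \emph{within} $\Lambda$; accordingly I would take $X := \Lambda$ equipped with the norm inherited from $\HH$ (a finite-dimensional Hilbert space, on which the $\HH$-, $\VV$- and $\Lx\infty^3$-norms are mutually equivalent, with constants controlled by $C_\Lambda$), $Y := \HH$, and $G := \sqrt{\vec\kappa}\cdot\Rcal|_{\Lambda}$, i.e.\ the componentwise $\kappa_i^{1/2}$-rescaling of the forward map restricted to $\Lambda$, with domain $\mathcal{D}(G) := \mathcal{I}_R \cap \Lambda$. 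A direct computation gives $\D G(x)^*\bigl(G(x) - \sqrt{\vec\kappa}\cdot\vpsimeas\bigr) = P_\Lambda\,\D\Rcal(x)^*\bigl[\vec\kappa\cdot(\Rcal(x)-\vpsimeas)\bigr]$, with $P_\Lambda$ the $\HH$-orthogonal projection onto $\Lambda$, so that \eqref{eq:landweber} is exactly the Landweber scheme for $G$ with data $y := \sqrt{\vec\kappa}\cdot\vpsimeas$, sought point $\bar x := \bar{\vpsi}_0$ (which satisfies $G(\bar x)=y$ by hypothesis) and starting point $x_0 := \vpsi^0_0$, implicitly taken in $\Lambda$ as befits a discretised scheme. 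Absorbing $\vec\kappa$ in this way only rescales all the constants below by factors comparable with $|\vec\kappa|^{1/2}$ and $(\min_i\kappa_i)^{1/2}$, which is why the step-size hypothesis will appear in the scaling-invariant form $\mu|\vec\kappa|<\mu^*$.

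Next I would verify conditions $(i)$--$(v)$ of Lemma \ref{lem:scherzer} on a ball $\mathcal{B} = B_{\rho'}(x_0) \subseteq \mathcal{D}(G) \cap K \cap \Iad$. Condition $(i)$ (local Lipschitz continuity of $\D G$) follows from \eqref{lipconst:frechet}: for $h\in\Lambda$ one has $\norm{h}_{\VV\cap\Lx\infty^3}\le C\norm{h}_{\HH}$, whence $\norm{\D G(x)-\D G(\tilde x)}_{\mathcal{L}(X,Y)}\le L\norm{x-\tilde x}_X$ with $L$ depending on $C_0$, $C_\Lambda$ and $\vec\kappa$. Condition $(ii)$ (weak sequential closedness of $G$) is immediate, since $\Lambda$ is finite-dimensional and $\Rcal$ is Lipschitz continuous by \cite[Theorem 3.2]{CGLMRR2019} and \cite[Proposition 2.3]{BCFLR2024}. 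Condition $(iii)$ (uniform Lipschitz stability, with $C_G$ comparable to the constant $C_s$ of Theorem \ref{thm:lipstab}) is precisely the content of that theorem. Condition $(iv)$ (a uniform bound $\norm{\D G(x)}_{\mathcal{L}(X,Y)}\le\hat L$ on $\mathcal{B}$) follows from the well-posedness of the linearised system \eqref{eq:philin}--\eqref{iclin} in \cite[Proposition 2.4]{BCFLR2024}, whose constant is uniform over $\Iad$ because the coefficients $F''(\bar\phi)$, $m(\bar\sigma)\hh''(\bar\phi)$, $m'(\bar\sigma)\hh'(\bar\phi)$, $\ldots$ are uniformly bounded on $\Iad$ thanks to the a priori estimates of \cite[Theorem 3.2]{CGLMRR2019}. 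Finally, condition $(v)$, namely $\mu<\hat L^{-2}$ and $\mu(1-\mu\hat L^2)<2C_G^2$, holds for $\mu$ small; tracking the $\vec\kappa$-rescaling turns it into $\mu|\vec\kappa|<\mu^*$ for a threshold $\mu^*$ depending only on the system parameters (and the fixed choices of $\Lambda$, $K$). With $(i)$--$(v)$ in hand, Lemma \ref{lem:scherzer} provides $\rho = (2L\hat L C_G^2)^{-2}\gs 0$ and $c = \mezzo\,\mu(1-\mu\hat L^2)C_G^{-2}\in(0,1)$, determined by the fixed data, such that whenever $\norm{\vpsi^0_0-\bar{\vpsi}_0}_{\HH}^2\le\rho$ the iterates stay in $\mathcal{B}$, obey $\norm{\vpsi_0^j-\bar{\vpsi}_0}_{\HH}^2\le\rho(1-c)^j$ and converge to $\bar{\vpsi}_0$ in $\HH$, which is the claim.

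The step I expect to require the most care is the compatibility of radii and domains: one must ensure that the ball $\mathcal{B}$ on which $(i)$, $(iii)$, $(iv)$ are invoked is genuinely contained in $K\cap\Iad\subseteq\mathcal{I}_R$ — so that $\Rcal$ is well-defined, of class $\Ccal^1$, and Lipschitz-stably invertible there — and that the iterates, which by the lemma never leave $\mathcal{B}$, remain admissible so the scheme is well-posed at every step. This is precisely where the finite-dimensionality of $\Lambda$ is essential: it makes $C_\Lambda$ finite (legitimising the norm comparisons in $(i)$ and $(iv)$) and reduces weak closedness in $(ii)$ to plain continuity. Everything else is a routine transcription of the constants obtained in \cite{BCFLR2024} into the hypotheses of Lemma \ref{lem:scherzer}.
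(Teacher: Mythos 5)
Your proposal is correct and follows essentially the same route as the paper: both apply Lemma \ref{lem:scherzer} with $X=\Lambda$, $Y=\HH$ and the forward map restricted to $\Lambda$, verifying $(i)$--$(v)$ from \eqref{lipconst:frechet}, the continuity of $\Rcal$ plus finite-dimensionality, Theorem \ref{thm:lipstab}, and the well-posedness of the linearised system, arriving at the same $\rho$, $c$ and the condition $\mu\abs{\vec{\kappa}}\ls\mu^*$. The only (cosmetic) difference is that you absorb the weights by rescaling $G$ with $\sqrt{\vec{\kappa}}$ whereas the paper factors $\vec{\kappa}$ out of the linear adjoint; your explicit remarks on the projection $P_\Lambda$ and on keeping $\mathcal{B}$ inside $K\cap\Iad$ are, if anything, slightly more careful than the published argument.
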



\begin{proof}
	We just need to apply Lemma \ref{lem:scherzer} with $X = \Lambda$, $Y = \HH$ and $G = \Rcal$. Indeed, hypothesis $(i)$ on Lipschitz-continuity of the Fr\'echet-derivative $\D \Rcal$ is verified even globally by the Fr\'echet-differentiability result in \cite[Theorem 2.5]{BCFLR2024}. Then, condition $(ii)$ on weak sequential closedness follows immediately by the well-posedness result in \cite[Theorem 3.2]{CGLMRR2019}, which implies continuity of $\Rcal$, and by the fact that $\Lambda$ is finite-dimensional, so weak and strong topologies coincide. Moreover, hypothesis $(iii)$ is exactly Theorem \ref{thm:lipstab}, giving Lipschitz stability of the inverse map, and condition $(iv)$ follows easily by \cite[Proposition 2.4]{BCFLR2024} on well-posedness of the linearised system. 
    Regarding the correspondence of the constants appearing in Lemma \ref{lem:scherzer}, we have that in our case $L = C_0$ given by \eqref{lipconst:frechet}, $C_G = C_s$ given by \eqref{eq:lipstab} and $\hat{L}$ is related to the well-posedness of the linearised system \eqref{eq:philin}--\eqref{iclin}.
    In particular, $\hat{L}$ should be such that
    \[ \norm{\D \Rcal(\phi_0, \sigma_0, p_0)}_{\mathcal{L}(\Lambda, \HH)} = \sup_{\norm{(h,k,w)}_{\HH} = 1} \, \norm{(Y(T), Z(T), P(T))}_{\HH} \le \hat{L}, \]
    for any choice of $(\phi_0, \sigma_0, p_0) \in \Lambda$.
    Additionally, we recall that, in our formulation \eqref{eq:landweber} of the algorithm, we also added the weights $\vec{\kappa}$, which can be also factored out of the adjoint operator, since it is linear. This means that we actually have to impose a smallness condition like $(v)$ above on $\mu \abs{\vec{\kappa}}$, i.e. 
    \[ \mu \abs{\vec{\kappa}} \ls \frac{1}{\hat{L}^2} \quad \text{and} \quad \mu \abs{\vec{\kappa}} (1 - \mu \abs{\vec{\kappa}} \hat{L}^2) \ls 2 C_s^2. \]
    Then $\mu^*$ can be deduced by the previous inequalities and we can define 
    \[ \rho = (2C_0\hat{L} C_s^2)^{-2} \quad \text{and} \quad c = \mezzo \mu \abs{\vec{\kappa}}(1 - \mu \abs{\vec{\kappa}} \hat{L}^2) C_s^{-2}. \]
    Therefore Lemma \ref{lem:scherzer} can be applied, and the proof is concluded.
\end{proof}

\begin{remark}
	Observe that, by Theorem \ref{thm:landweber}, the scheme \eqref{eq:landweber} converges if the initial guess $\vpsi_0^0$ is chosen such that $\norm{\vpsi^0_0 - \bar{\vpsi}_0}^2_{\HH} < \rho$. However, theoretically, the Lipschitz stability estimate \eqref{eq:lipstab} gives us an explicit condition to choose our starting guess. Indeed, by Theorem \ref{thm:lipstab}, we know that, if we assume $\bar{\vpsi}_0 \in K \cap \Iad$, we can expect that 
    \[ \norm{\vpsi^0_0 - \bar{\vpsi}_0}_{\HH} \le C_s \norm{ \Rcal(\vpsi_0^0) - \vpsimeas }_{\HH}, \]
    where the term on the right-hand side is explicitely computable in practice.
    Then, with a similar reasoning as in \cite[Lemma 2]{AS2022}, we just need to find $\vpsi_0^0 \in K$ such that 
	\begin{equation}
		\label{eq:guess}
		\norm{ \Rcal(\vpsi_0^0) - \vpsimeas }_{\HH} \le \frac{\sqrt{\rho}}{C_s}.
	\end{equation}
    This would immediately imply that $\norm{\vpsi^0_0 - \bar{\vpsi}_0}^2_{\HH} < \rho$.
	Since $K$ is compact, \eqref{eq:guess} can be guaranteed for example by covering $K$ with a fine enough finite lattice or by random sampling until one finds a starting point close enough.
    Clearly, this is not efficient and often unpractical when working with high-dimensional discretisations and long final times $T$, since, by its definition, the bound in \eqref{eq:guess} decays at least exponentially with both dimension and final time.  
    Due to these reasons, for our experiments in Section \ref{sec:sims}, we choose the initial guess through physical considerations. 
\end{remark}

While being an interesting result in theory, Theorem \ref{thm:landweber} requires highly restrictive hypotheses to be verified in practice. In particular, the convergence radius $\rho$ becomes almost immediately too small when increasing the final time or the dimension of the discrete spaces. Moreover, the bound on the constant step-size $\mu$ is not easily computable. To overcome these issues, one usually tries to set up an adaptive choice of the step-size, thus considering algorithms of the form: 
\begin{equation}
	\label{eq:landweber_adapt}
	\vpsi_0^{j+1} = \vpsi_0^j - \mu^j \D\Rcal(\vpsi_0^j)^*[ \vec{\kappa} \cdot (\Rcal(\vpsi_0^j) - \vpsimeas) ]. 
\end{equation}
There are many possible choices of the adaptive step-size, due to the extensive literature on general gradient descent methods. One of the most commonly used is the \emph{steepest descent}, which essentially amounts to choosing the step that gives the biggest reduction along the direction of the gradient. Following \cite[Section 3.4]{kaltenbacher:neubauer:scherzer}, in case of our Landweber iteration this choice becomes
\begin{equation}
    \label{adaptive:step}
    \mu^j \abs{\vec{\kappa}} = \frac{\norm{\D\Rcal(\vpsi_0^j)^*[ \vec{\kappa} \cdot (\Rcal(\vpsi_0^j) - \vpsimeas) ]}^2_{\HH}}{\norm{\D\Rcal(\vpsi_0^j)[\D\Rcal(\vpsi_0^j)^*[ \vec{\kappa} \cdot (\Rcal(\vpsi_0^j) - \vpsimeas) ]]}^2_{\HH}} = \frac{\norm{(q^j(0), z^j(0), r^j(0))}^2_{\HH}}{\norm{(Y^j(T), Z^j(T), P^j(T))}^2_{\HH}},
\end{equation}
where $(q^j,z^j,r^j)$ is the solution to the adjoint system \eqref{eq:phiadj}--\eqref{icadj} with $\tripletil = (\phi^j, \sigma^j, p^j)$ solution corresponding to the initial data $(\phi_0^j, \sigma_0^j, p_0^j)$ as before, and $(Y,Z,P)$ is the solution to the linearised system \eqref{eq:philin}--\eqref{iclin} with initial data $(h,k,w) = (q^j(0), z^j(0), r^j(0))$ and $(\phib, \sigmab, \pb) = (\phi^j, \sigma^j, p^j)$ solution corresponding to the initial data $(\phi_0^j, \sigma_0^j, p_0^j)$, namely 
\begin{alignat*}{2}
	& \partial_t Y - \lambda \Delta Y + F''(\phi^j) Y - m(\sigma^j) \hh''(\phi^j) Y - m'(\sigma^j) \hh'(\phi^j) Z = 0 
	\qquad && \hbox{in $Q_T$,} \\
	& \partial_t Z - \eta \Delta Z + \gamma_h Z + (\gamma_c - \gamma_h) (\sigma^j Y + \phi^j Z) - (S_c - S_h) Y = 0 
	\quad && \hbox{in $Q_T$,} \\
	& \partial_t P - D \Delta P + \gamma_p P 
	= (\alpha_c - \alpha_h) Y 
	&& \hbox{in $Q_T$,} \\
	& Y = 0, \quad \partial_{\n} Z = \partial_{\n} P = 0 
	&& \hbox{on $\Sigma_T$,} \\
	& Y(0) = q^j(0), \quad Z(0) = z^j(0), \quad P(0) = r^j(0)
	&& \hbox{in $\Omega$.}
\end{alignat*}
It is shown in \cite[Theorems 3.21 and 3.22]{kaltenbacher:neubauer:scherzer} that this method converges to the exact solution $\tripleinbar$, under the validity of the tangential cone condition on $\Rcal$ and the choice of a suitable stopping criterion for the number of iterations, via a discrepancy principle. For the sake of completeness, we recall that the operator $\Rcal$ satisfies the tangential cone condition if there exist $r \gs 0$ and $\eta \ls \mezzo$ such that
\begin{equation}
    \label{eq:tangentialcone}
    \norm{\Rcal(\vpsi_0) - \Rcal(\widetilde{\vpsi}_0) - \D \Rcal(\vpsi_0)[\vpsi_0 - \widetilde{\vpsi}_0]}_{\HH} \le \eta \norm{\Rcal(\vpsi_0) - \Rcal(\widetilde{\vpsi}_0)}_{\HH} \quad \forall \vpsi_0, \widetilde{\vpsi}_0 \in B_{2r}(\vpsi_0^0).
\end{equation}
In our case, since $\Rcal$ is Fr\'echet-differentiable with a Lipschitz continuous derivative and the stability estimate \eqref{eq:lipstab} holds, it is easy to see that this local condition holds. Indeed, by straight-forward computations, it follows that 
\begin{align*}
   & \norm{\Rcal(\vpsi_0) - \Rcal(\widetilde{\vpsi}_0) - \D \Rcal(\vpsi_0)[\vpsi_0 - \widetilde{\vpsi}_0]}_{\HH} \le C_0 \norm{\vpsi_0 - \widetilde{\vpsi}_0}^2_{\HH} \\
   & \quad = C_0 \norm{\vpsi_0 - \widetilde{\vpsi}_0}_{\HH} \norm{\vpsi_0 - \widetilde{\vpsi}_0}_{\HH} \le C_0 C_s 4 r \,\norm{\Rcal(\vpsi_0) - \Rcal(\widetilde{\vpsi}_0)}_{\HH},
\end{align*}
where $\eta = C_0 C_s 4 r \ls \mezzo$, if $r \gs 0$ is small enough. As said above (cf. \cite{kaltenbacher:neubauer:scherzer}), this condition guarantees local convergence of the adaptive Landweber method \eqref{eq:landweber_adapt}. 
We further comment that the Landweber method, both with the fixed step size and the steepest descent version, is known to be a reliable regularisation method in the presence of noisy data (cf. \cite{kaltenbacher:neubauer:scherzer}).
Indeed, if paired with a suitable stopping rule for the number of iterations, the method reconstructs a regularised version of the initial data and converges to the exact solution if the noise level goes to zero. 
The reason why the number of iterations has to be stopped accordingly in the presence of noise is to avoid the amplification of errors, as the Landweber scheme shows a typical semi-convergence behaviour.
This will be clearer by looking at the simulation results in Section \ref{sec:sims}.

However, due to its slow convergence, the proposed Landweber schemes may still be not enough to have convergence when the final time $T$ is too large, since all the stability constants blow up exponentially.
For this reason, when doing numerical simulations, we are forced to adopt more efficient adaptive step-size choices than those mentioned above. 
The literature on methods for accelerating Landweber iterations or, more in general, gradient descent schemes is definitely vast, we cite for example \cite{HR2017, BBT2017, DHS2011, nesterov} and references therein. 
In particular, upon many trial and error procedures, we especially mention \cite{MM2019adaptive}, since the method proposed there proved particularly useful to our situation. 
Indeed, they propose a new choice of the adaptive step size for a general gradient descent method and they show its convergence, both theoretically and numerically, for a convex function in $\R^n$ with locally Lipschitz gradient.
Their proposed choice for the step size is particularly easy to implement and in our case takes the following form
\begin{align}
    & \mu^j \abs{\vec{\kappa}} = \min \left\{ \sqrt{1 + \theta^{j-1}} \, \mu^{j-1} \abs{\vec{\kappa}}, \frac{\norm{\vpsi_0^j - \vpsi_0^{j-1}}_{\HH}}{2 \norm{((q^j - q^{j-1})(0), (z^j - z^{j-1})(0), (r^j - r^{j-1})(0)) }_{\HH}} \right\}, \nonumber \\
    & \hbox{where } \theta^{j} = \frac{\mu^j}{\mu^{j-1}} \hbox{ for any $j \in \N$ and $\mu^0 \gs 0$, $\theta^0 = + \infty$ are given as inputs.} \label{eq:malitsky}
\end{align}
Heuristically, the first term inside the minimum assures that some kind of discrete Lyapunov energy associated to the scheme keeps decreasing, while the second one is an approximation of the reciprocal of the Lipschitz constant of the descent direction.
Despite the lack of analysis in infinite-dimensional spaces with non-convex operators, this method still proved powerful in tackling our problem.
More details will be given when speaking about numerical results in the next sections. 
To distinguish the two proposed variants of the choice of the adaptive step size, in what follows we will denote \eqref{eq:landweber_adapt} with the steepest descent choice \eqref{adaptive:step} as the Landweber scheme and \eqref{eq:landweber_adapt} with the second choice \eqref{eq:malitsky} as the Adaptive Gradient Descent scheme. 
Keep in mind, however, that both schemes fit into the Landweber framework, even if we call them differently. 

\section{Numerical methods}\label{sec:numeth}

\subsection{Spatial discretisation}\label{sec:sdisc}
We use Isogeometric Analysis (IGA) to discretise in space the forward, linearised, and adjoint problems. IGA is a recent, rapidly-growing technique that can be regarded as a generalisation of the classical Finite Element Method \cite{Cottrell2009}. In particular, we use a standard isogeometric Bubnov-Galerkin method based on a $C^1$-continuous quadratic B-spline space (see Ref.~\cite{Cottrell2009} for further detail). Hence, our spatial discretisation requires the definition of the weak forms of the forward, linearised, and adjoint problems. Let us first consider the forward problem in Eqs.~\eqref{eq:phi}-\eqref{eq:p}, whose weak form can be written as: find $\phi\in V_0$, $\sigma\in V$, and $p\in V$ such that
\begin{align} 
\label{bwf1} & B_1^\chi(\chi_1,\phi,\sigma,p)=0\quad\text{for all}\quad \chi_1\in V_0, \\
\label{bwf2} & B_2^\chi(\chi_2,\phi,\sigma,p)=0\quad\text{for all}\quad \chi_2\in V,   \\
\label{bwf3} & B_3^\chi(\chi_3,\phi,\sigma,p)=0\quad\text{for all}\quad \chi_3\in V, 
\end{align}
\noindent where
\begin{align} 
\label{wf1} B_1^\chi(\chi_1,\phi,\sigma,p) & = \int_{\Omega} \chi_1 \left[ \partial_t \phi+ F^\prime(\phi) - m(\sigma)\hh^\prime(\phi) \right] \, \de x  + \int_{\Omega} \lambda\nabla \chi_1\cdot  \nabla \phi \, \de x, \\
\label{wf2} B_2^\chi(\chi_2,\phi,\sigma,p) & = \int_{\Omega} \chi_2 \left[ \partial_t \sigma + \gamma_h\sigma + \gamma_{ch}\sigma \phi  - S_h - S_{ch} \phi \right] \,\de x +  \int_{\Omega} \eta\nabla \chi_2\cdot  \nabla \sigma \,\de x, \\
\label{wf3} B_3^\chi(\chi_3,\phi,\sigma,p) & = \int_{\Omega} \chi_3 \left[ \partial_t p + \gamma_p p -\alpha_h - \alpha_{ch}\phi \right] \,\de x  + \int_{\Omega} D\nabla \chi_3\cdot  \nabla p \,\de x.
\end{align}
We also introduce the weak form of the linearised problem defined by Eqs.~\eqref{eq:philin}-\eqref{eq:plin}, which is stated as: find $Y\in V_0$, $Z\in V$, and $P\in V$ such that
\begin{align} 
\label{b2wf1} & B_1^\zeta(\zeta_1,Y,Z,P)=0\quad\text{for all}\quad \zeta_1\in V_0, \\
\label{b2wf2} & B_2^\zeta(\zeta_2,Y,Z,P)=0\quad\text{for all}\quad \zeta_2\in V,   \\
\label{b2wf3} & B_3^\zeta(\zeta_3,Y,Z,P)=0\quad\text{for all}\quad \zeta_3\in V, 
\end{align}
\noindent where
\begin{align} 
\label{lwf1} B_1^\zeta(\zeta_1,Y,Z,P) & = \int_{\Omega} \zeta_1 \left[ \partial_t Y + F^{\prime\prime}(\bar{\phi})Y - m(\bar{\sigma})\hh^{\prime\prime}(\bar{\phi})Y - m^\prime(\bar{\sigma})\hh^\prime({\bar{\phi}})Z \right]\, \de x \\
& \notag \quad + \int_{\Omega} \lambda\nabla \zeta_1\cdot \nabla Y \de x, \\
\label{lwf2} B_2^\zeta(\zeta_2,Y,Z,P) & = \int_{\Omega} \zeta_2 \left[ \partial_t Z + \gamma_h Z + \gamma_{ch}(\bar{\sigma}Y  + \bar{\phi}Z) - S_{ch}Y \right] \,\de x +  \int_{\Omega} \eta\nabla \zeta_2\cdot\nabla Z \,\de x, \\
\label{lwf3} B_3^\zeta(\zeta_3,Y,Z,P) & = \int_{\Omega} \zeta_3 \left[ \partial_t P + \gamma_p P - \alpha_{ch} Y \right] \,\de x  + \int_{\Omega} D\nabla \zeta_3\cdot  \nabla P \,\de x.
\end{align}
Likewise, we express the weak form of the adjoint problem given by Eqs.~\eqref{eq:phiadj}-\eqref{eq:padj} as follows: find $q\in V_0$, $z\in V$, and $r\in V$ such that
\begin{align} 
\label{b3wf1} & B_1^\psi(\psi_1,q,z,r)=0\quad\text{for all}\quad \psi_1\in V_0, \\
\label{b3wf2} & B_2^\psi(\psi_2,q,z,r)=0\quad\text{for all}\quad \psi_2\in V,   \\
\label{b3wf3} & B_3^\psi(\psi_3,q,z,r)=0\quad\text{for all}\quad \psi_3\in V, 
\end{align}
\noindent where
\begin{align} 
\label{awf1} B_1^\psi(\psi_1,q,z,r) & = \int_{\Omega} \psi_1 \left[ -\partial_t q + F^{\prime\prime}(\bar{\phi})q - m(\bar{\sigma})\hh^{\prime\prime}(\bar{\phi})q + \gamma_{ch}\bar{\sigma}z - S_{ch}z - \alpha_{ch}r \right] \, \de x \\
& \notag \quad + \int_{\Omega} \lambda\nabla \psi_1\cdot \nabla q \, \de x, \\
\label{awf2} B_2^\psi(\psi_2,q,z,r) & = \int_{\Omega} \psi_2 \left[ -\partial_t z + \gamma_h z + \gamma_{ch}\bar{\phi}z  - m^\prime(\bar{\sigma})\hh^\prime(\bar{\phi})q \right] \, \de x +  \int_{\Omega} \eta\nabla \psi_2\cdot\nabla z \, \de x, \\
\label{awf3} B_3^\psi(\psi_3,q,z,r) & = \int_{\Omega} \psi_3 \left[ -\partial_t r + \gamma_p r \right] \, \de x  + \int_{\Omega} D\nabla \psi_3\cdot  \nabla r \, \de x.
\end{align}
The spatial discretisation of the weak forms defined above further relies on defining finite-dimensional spaces $V^h\subset V$ and $V_0^h\subset V_0$. We construct these discrete spaces leveraging the aforementioned $C^1$-continuous quadratic B-spline space \cite{Cottrell2009}. For example, the space $V^h$ can be defined in terms of a spline basis as $V^h=\mbox{span}\left\{ N_A(x)\right\}_{A=1,\dots,n_f}$, where $n_f$ is the number of basis functions (i.e., $n_f=\dim(V^h)$) and $N_A(x)$ represents each multivariate spline basis function. We also utilise the superscript $h$ to denote finite-dimensional approximations to the exact solution of the forward, linearised, and adjoint problems. For instance, the finite-dimensional approximation to the tumour phase field is given by $\phi^h(t,x)=\sum_{A=1}^{n_f}\phi_A(t)N_A(x)$, where the time-dependent coefficients $\phi_A(t)$ are called control variables. The functions $\sigma^h$, $p^h$, $Y^h$, $Z^h$, $P^h$, $q^h$, $z^h$, and $r^h$ are defined analogously. Importantly, the functions that belong to $V_0^h$ will have some control variables constrained to ensure that the Dirichlet boundary conditions are satisfied. Furthermore, the Neumann boundary conditions that are relevant to the forward, linearised, and adjoint problems are naturally enforced within the weak form.
We finally mention that the space $\VV^h := V_0^h \times V^h \times V^h$ can then be considered as an example of the finite-dimensional subspace $\Lambda \subseteq \VV$ in Theorem \ref{thm:lipstab}. Actually, in the following, we will tacitly assume $\Lambda =\VV^h$.

\subsection{Time discretisation}\label{sec:tdisc}
We use the generalized-$\alpha$ method to integrate in time \cite{Cottrell2009,Chung1993,Jansen2000}. This method is applied to the spatially-discretised version of the weak forms of the forward, linearised, and adjoint problems introduced in Section~\ref{sec:sdisc}. Let us first consider the forward problem. We define $\boldsymbol{\phi}$ as the global vector of control variables associated with the unknown field $\phi^h$, i.e., $\boldsymbol{\phi}=\left\{\phi_A \right\}_{A=1,\dots,n_f}$. Similarly, we further define the vectors $\boldsymbol{\sigma}=\left\{\sigma_A \right\}_{A=1,\dots,n_f}$ and  $\boldsymbol{p}=\left\{p_A \right\}_{A=1,\dots,n_f}$. We can now introduce the residual vector of the forward problem as
\begin{equation}
\vec{\rm Res^F} = \{ \boldsymbol{R}^\phi,\boldsymbol{R}^\sigma, \boldsymbol{R}^p\} 
\end{equation}
where $\boldsymbol{R}^\phi=\{R^\phi_A\}_{A=1,\dots,n_f}$, $\boldsymbol{R}^\sigma=\{R^\sigma_A\}_{A=1,\dots,n_f}$, and $\boldsymbol{R}^p=\{R^p_A\}_{A=1,\dots,n_f}$, such that
\begin{align} 
R^\phi_A   & = B_1^\chi(N_A,\boldsymbol{\phi},\boldsymbol{\sigma},\boldsymbol{p}),\\
R^\sigma_A & = B_2^\chi(N_A,\boldsymbol{\phi},\boldsymbol{\sigma},\boldsymbol{p}),\\
R^p_A      & = B_3^\chi(N_A,\boldsymbol{\phi},\boldsymbol{\sigma},\boldsymbol{p}).
\end{align}
Let us further define $\boldsymbol{U}_n=\{\boldsymbol{\phi}_n,\boldsymbol{\sigma}_n,\boldsymbol{p}_n\}$ as the time-discrete approximation to the control variables of the forward problem at time $t_n$.
Then, in the forward problem we calculate $\boldsymbol{U}_{n+1}$ from $\boldsymbol{U}_n$ by enforcing the equation
\begin{equation}\label{resi}
\vec{\rm Res^F}(\dot{\boldsymbol{U}}_{n+\alpha_m},\boldsymbol{U}_{n+\alpha_f})=\vec 0,
\end{equation}
where
\begin{alignat}{2}
&\vec U_{n+1}              &= &\,\vec U_n+(t_{n+1}-t_n)\dot{\vec U}_n+\gamma(t_{n+1}-t_n)(\dot{\vec U}_{n+1}-\dot{\vec U}_n), \label{resii} \\
&\dot{\vec U}_{n+\alpha_m} &= &\,\dot{\vec U}_n+\alpha_m(\dot{\vec U}_{n+1}-\dot{\vec U}_n),  \\
&{\vec U}_{n+\alpha_f}     &= &\,{\vec U}_n+\alpha_f({\vec U}_{n+1}-{\vec U}_n). \label{resf}
\end{alignat}
In Eqs.~\eqref{resi}-\eqref{resf}, we define $t_{n+1}-t_n=\Delta t_n>0$ as the time step, while $\alpha_m$, $\alpha_f$ and $\gamma$ are real-valued parameters that define the accuracy and stability of the time integration algorithm.

Similarly, for the linearised problem, we introduce the global vectors $\boldsymbol{Y}=\left\{Y_A \right\}_{A=1,\dots,n_f}$, $\boldsymbol{Z}=\left\{Z_A \right\}_{A=1,\dots,n_f}$ and $\boldsymbol{P}=\left\{P_A \right\}_{A=1,\dots,n_f}$ corresponding to the discrete functions $Y^h$, $Z^h$, and $P^h$. We further define the residual vector for the linearised problem as
\begin{equation}
\vec{\rm Res^L} = \{ \boldsymbol{R}^Y,\boldsymbol{R}^Z, \boldsymbol{R}^P\} 
\end{equation}
where $\boldsymbol{R}^Y=\{R^Y_A\}_{A=1,\dots,n_f}$, $\boldsymbol{R}^Z=\{R^Z_A\}_{A=1,\dots,n_f}$ and $\boldsymbol{R}^P=\{R^P_A\}_{A=1,\dots,n_f}$, such that
\begin{align} 
R^Y_A   & = B_1^\zeta(N_A,\boldsymbol{Y},\boldsymbol{Z},\boldsymbol{P}),\\
R^Z_A   & = B_2^\zeta(N_A,\boldsymbol{Y},\boldsymbol{Z},\boldsymbol{P}),\\
R^P_A   & = B_3^\zeta(N_A,\boldsymbol{Y},\boldsymbol{Z},\boldsymbol{P}).
\end{align}
Now, we can redefine $\boldsymbol{U}_n=\{\boldsymbol{Y}_n,\boldsymbol{Z}_n,\boldsymbol{P}_n\}$ using the time-discrete approximation to the control variables of the linearised problem at time $t_n$. Then, the time integration of the linearised problem consists of calculating $\boldsymbol{U}_{n+1}$ from $\boldsymbol{U}_n$ by requiring
\begin{equation}\label{resl}
\vec{\rm Res^L}(\dot{\boldsymbol{U}}_{n+\alpha_m},\boldsymbol{U}_{n+\alpha_f})=\vec 0,
\end{equation}
and using the expressions for $\boldsymbol{U}_{n+1}$, $\boldsymbol{U}_{n+\alpha_m}$, and $\boldsymbol{U}_{n+\alpha_f}$ provided by Eqs.~\eqref{resii}-\eqref{resf}. 

For the adjoint problem, we make the same initial definitions as we did for the forward and linearised problem. Hence, we introduce the global vectors $\boldsymbol{q}=\left\{q_A \right\}_{A=1,\dots,n_f}$, $\boldsymbol{z}=\left\{z_A \right\}_{A=1,\dots,n_f}$ and $\boldsymbol{r}=\left\{r_A \right\}_{A=1,\dots,n_f}$, which correspond to the discrete functions $q^h$, $z^h$, and $r^h$. Furthermore, we define the residual vector for the adjoint problem as
\begin{equation}
\vec{\rm Res^A} = \{ \boldsymbol{R}^q,\boldsymbol{R}^z, \boldsymbol{R}^r\} 
\end{equation}
where $\boldsymbol{R}^q=\{R^q_A\}_{A=1,\dots,n_f}$, $\boldsymbol{R}^z=\{R^z_A\}_{A=1,\dots,n_f}$, and $\boldsymbol{R}^r=\{R^r_A\}_{A=1,\dots,n_f}$, such that
\begin{align} 
R^q_A   & = B_1^\psi(N_A,\boldsymbol{q},\boldsymbol{z},\boldsymbol{r}),\\
R^z_A   & = B_2^\psi(N_A,\boldsymbol{q},\boldsymbol{z},\boldsymbol{r}),\\
R^r_A   & = B_3^\psi(N_A,\boldsymbol{q},\boldsymbol{z},\boldsymbol{r}).
\end{align}
Let us now redefine $\boldsymbol{U}_n=\{\boldsymbol{q}_n,\boldsymbol{z}_n,\boldsymbol{r}_n\}$ using the time-discrete approximation to the control variables of the adjoint problem at time $t_n$. A fundamental difference of the adjoint problem with respect to the forward and linearised problems is that the former is solved backwards in time starting at $t=T$, so we need to redefine $\Delta t_n=t_n-t_{n+1}<0$. Then, the time integration of the adjoint problem consists of computing $\boldsymbol{U}_n$ from $\boldsymbol{U}_{n+1}$ by imposing 
\begin{equation}\label{resaa}
\vec{\rm Res^A}(\dot{\boldsymbol{U}}_{n+\alpha_m},\boldsymbol{U}_{n+\alpha_f})=\vec 0,
\end{equation}
where
\begin{alignat}{2}
&\vec U_{n}              &= &\,\vec U_{n+1}+(t_{n}-t_{n+1})\dot{\vec U}_{n+1}+\gamma(t_{n}-t_{n+1})(\dot{\vec U}_{n}-\dot{\vec U}_{n+1}), \label{resai} \\
&\dot{\vec U}_{n+\alpha_m} &= &\,\dot{\vec U}_{n+1}+\alpha_m(\dot{\vec U}_{n}-\dot{\vec U}_{n+1}),  \\
&{\vec U}_{n+\alpha_f}     &= &\,{\vec U}_{n+1}+\alpha_f({\vec U}_{n}-{\vec U}_{n+1}). \label{resaf}
\end{alignat}
Note that we have redefined $\boldsymbol{U}_{n+1}$, $\boldsymbol{U}_{n+\alpha_m}$, and $\boldsymbol{U}_{n+\alpha_f}$ in Eqs.~\eqref{resai}-\eqref{resaf} with respect to their former expressions used for the time integration of the forward and linearised problems provided by Eqs.~\eqref{resii}-\eqref{resf} in order to accommodate the backwards nature of the adjoint problem.

As shown in Ref.~\cite{Jansen2000}, $A$-stability and second-order accuracy can be obtained with the generalized-$\alpha$ method if $\rho_\infty\in[0,1]$ and
\begin{equation}\label{alphas}
\alpha_m=\frac{1}{2}\left(\frac{3-\rho_\infty}{1+\rho_\infty}\right),\quad \alpha_f=\frac{1}{1+\rho_\infty},\quad \gamma=\frac{1}{2}+\alpha_m-\alpha_f.
\end{equation}
All the simulations presented in this work were carried out by using $\rho_\infty=1/2$ and leveraging the definitions provided in Eq.~\eqref{alphas}. Additionally, we used the Newton-Raphson method \cite{Cottrell2009} to solve the algebraic systems given by Eqs.~\eqref{resi}-\eqref{resf} for the forward problem, Eqs.~\eqref{resl} and \eqref{resii}-\eqref{resf} for the linearised problem, and Eqs.~\eqref{resaa}-\eqref{resaf} for the adjoint problem. For each problem, the convergence criterion to advance from one time step to the next one consists of reducing the individual residuals to $\eps_{NR}$ of its initial value (i.e., $\vec R^\phi$, $\vec R^\sigma$, and $\vec R^p$ for the forward problem; $\vec R^Y$, $\vec R^Z$, and $\vec R^P$ for the linearised problem; and $\vec R^q$, $\vec R^z$, and $\vec R^r$ for the adjoint problem). Then, the linear systems that result after the application of the Newton-Raphson method are solved using GMRES \cite{Saad1986} with a diagonal preconditioner up to a predefined tolerance $\eps_{GMRES}$ or a maximum number of iterations.

\subsection{Iterative algorithms for the identification of initial data}
In the simulation study conducted in this work (see Section~\ref{sec:sims}), we will focus on reconstructing the initial tumour phase field $\phi_0$.
The rationale for this choice is that knowledge on the tumour burden (e.g., volume, extension) is a central piece of information in clinical decision-making for prostate cancer management \cite{Mottet2021,Cornford2021}.
Additionally, clinical imaging techniques that are currently used to diagnose, monitor and plan treatments for prostate cancer patients could be leveraged to obtain measurements of the spatial map of the tumour phase field (e.g., multiparametric magnetic resonance imaging) \cite{Mottet2021,Cornford2021,Lorenzo2022_review,Lorenzo2024}, including $\phi_\mathrm{meas}$ at time $t=T$ for the purposes of the present work.
Then, given a reconstruction of $\phi_0$, for every step we can approximate the initial data $\sigma_0$ and $p_0$ by using linear phenomenological laws, as already done in \cite{CGLMRR2019, CGLMRR2021}:
\begin{equation}\label{eq:s0p0}
    \sigma_0 = c_{0,\sigma} + c_{1,\sigma} \phi_0, \qquad p_0 = c_{0,p} + c_{1,p} \phi_0,
\end{equation}
where $c_{0,\sigma}, c_{1,\sigma}, c_{0,p}, c_{1,p} \in \R$ are explicit coefficients. 
In this context, we set $\kappa_1 = 1$, $\kappa_2 = \kappa_3 = 0$ in the general objective functional provided by Eq.~\eqref{eq:minproblem}, such that the minimisation problem becomes
\begin{equation}
	\label{eq:minproblemsimu}
	 \argmin_{\triplein \in K \cap \Iad} \Jcal (\phi(T)) = \argmin_{\triplein \in K \cap \Iad} \frac{1}{2} \norm{\phi(T) - \phimeas}^2_H,
\end{equation}
and such that $\sigma_0$ and $p_0$ are estimated from $\phi_0$ according to Eq.~\eqref{eq:s0p0}

\subsubsection{Short time horizon}\label{sec:itshortT}
For short time horizons (e.g. $T=10$ days), we will use the Landweber iteration scheme \eqref{eq:landweber_adapt} with the steepest descent step size choice \eqref{adaptive:step} presented in Section~\ref{sec:landweber}. Thus, given a final measurement $\phimeas$, an initial guess $\phi_0^0$ and a maximum number of iterations $j^*$, for any $j=0,\dots,j^*$ we follow the following algorithm:
\begin{enumerate}[font = \bfseries, label = \arabic*.]
    \item Given $\phi_0^j$, set $\sigma_0^j = c_{0,\sigma} + c_{1,\sigma} \phi_0^j$ and $p_0^j = c_{0,p} + c_{1,p} \phi_0^j$.
    \item Run the forward system
    \begin{alignat*}{2}
	& \partial_t \phi - \lambda \Delta \phi + F'(\phi) - m(\sigma) \hh'(\phi) = 0 
	&& \hbox{in $Q_T$,} \\
	& \partial_t \sigma - \eta \Delta \sigma = S_h + S_{ch} \phi - \gamma_h \sigma - \gamma_{ch} \sigma \phi  
	\qquad && \hbox{in $Q_T$,} \\
	& \partial_t p - D \Delta p + \gamma_p p 
	= \alpha_h + \alpha_{ch} \phi 
	&& \hbox{in $Q_T$,} \\
	& \phi = 0, \quad \partial_{\n} \sigma = \partial_{\n} p = 0 
	&& \hbox{on $\Sigma_T$,} \\
	& \phi(0) = \phi_0^j, \quad \sigma(0) = \sigma_0^j, \quad p(0) = p_0^j
	&& \hbox{in $\Omega$,}
    \end{alignat*}
    and call the corresponding solution $(\phi^j, \sigma^j, p^j)$.
    \item Run the adjoint system
    \begin{alignat*}{2}
	& - \partial_t q - \lambda \Delta q + F''(\phi^j) q - m(\sigma^j) \hh''(\phi^j) q + \gamma_{ch} \sigma^j z - S_{ch} z - \alpha_{ch} r = 0 
	\qquad && \hbox{in $Q_T$,} \\
	& - \partial_t z - \eta \Delta z + \gamma_h z + \gamma_{ch} \phi^j z - m'(\sigma^j) \hh'(\phi^j) q = 0 
	&& \hbox{in $Q_T$,} \\
	& - \partial_t r - D \Delta r + \gamma_p r 
	= 0
	&& \hbox{in $Q_T$,} \\
	& q = 0, \quad \partial_{\n} z = \partial_{\n} r = 0 
	&& \hbox{on $\Sigma_T$,} \\
	& q(T) = \phi^j(T) - \phimeas, \quad z(T) = 0, \quad r(T) = 0
	&& \hbox{in $\Omega$,}
    \end{alignat*}
    and call the corresponding solution $(q^j, z^j, p^j)$.
    \item Run the linearised system
    \begin{alignat*}{2}
	& \partial_t Y - \lambda \Delta Y + F''(\phi^j) Y - m(\sigma^j) \hh''(\phi^j) Y - m'(\sigma^j) \hh'(\phi^j) Z = 0 
	\qquad && \hbox{in $Q_T$,} \\
	& \partial_t Z - \eta \Delta Z + \gamma_h Z + \gamma_{ch} \sigma^j Y + \gamma_{ch} \phi^j Z - S_{ch} Y = 0 
	&& \hbox{in $Q_T$,} \\
	& \partial_t P - D \Delta P + \gamma_p P 
	= \alpha_{ch} Y 
	&& \hbox{in $Q_T$,} \\
	& Y = 0, \quad \partial_{\n} Z = \partial_{\n} P = 0 
	&& \hbox{on $\Sigma_T$,} \\
	& Y(0) = q^j(0), \quad Z(0) = z^j(0), \quad P(0) = r^j(0)
	&& \hbox{in $\Omega$.}
    \end{alignat*}
    and call the corresponding solution $(Y^j, Z^j, P^j)$.
    \item Compute the adaptive steepest descent step-size as in \eqref{adaptive:step} with $\kappa_1 = 1$, $\kappa_2 = \kappa_3 = 0$:
    \begin{equation}\label{eq:landweber_step}
    \mu^j = \frac{\norm{q^j(0)}^2_{L^2(\Omega)}}{\norm{Y^j(T)}^2_{L^2(\Omega)}} 
    \end{equation}
    \item Do the Landweber step as in \eqref{eq:landweber_adapt} with $\kappa_1 = 1$, $\kappa_2 = \kappa_3 = 0$:
    \begin{equation}\label{eq:landweber_update}
    \phi_0^{j+1} = \phi_0^j - \mu^j q^j(0).
    \end{equation}
    Check that the updated tumour phase field at $t=0$ verifies $0 \leq \phi_0^{j+1} \leq 1$; otherwise truncate $\phi_0^{j+1}$ accrodingly to ensure that this iterate is admissible.
    \item Check if the following two convergence criteria are satisfied:
    \begin{align*}
        & \mbox{Criterion 1: }\norm{q_0^j}^2_{L^2(\Omega)} \le \eps_{SD} \norm{q_0^0}^2_{L^2(\Omega)}\ \mbox{or}\ \norm{q_0^j - q_0^{j-1}}^2_{L^2(\Omega)} \leq \eps_{SD} \norm{q_0^{j-1}}^2_{L^2(\Omega)},\\
        & \mbox{Criterion 2: } \Jcal (\phi^j(T)) \le \eps_{SD} \Jcal (\phi^0(T))\ \mbox{or}\ \Jcal (\phi^j(T)) -\Jcal (\phi^{j-1}(T))\le \eps_{SD} \Jcal (\phi^{j-1}(T)).
    \end{align*}
    Notice that these two criteria control for convergence of $\phi_0$ and $\phi_T$, respectively. If these convergence criteria are not satisfied, then restart from the top in a new iteration.
\end{enumerate}

\subsubsection{Long time horizon}\label{sec:itlongT}

Unfortunately, the steepest descent step size \eqref{adaptive:step} often becomes very small as the end-time $T$ grows larger, therefore it does not seem the best choice overall, even if we have a good theoretical basis. 
In our case, this can be easily inferred from the results of the simulation study conducted in the next Section \ref{sec:sims}.
Thus, for longer time horizons (e.g. $T=100$ days), we will use the adaptive gradient descent choice proposed in \eqref{eq:malitsky} for the step size $\mu^j$ (see Algorithm 1 in~\cite{MM2019adaptive}). 
We recall that we are still dealing with a Landweber scheme, but with a different choice for the step size. 
However, in the following we will refer to the algorithm below as the adaptive gradient descent method, to differentiate it from the previous one.
Hence, this method requires a final measurement of the tumour phase field $\phimeas$, an initial guess $\phi_0^0$, initial values for the internal parameters $\mu^0$ and $\theta^0$, and a maximum number of iterations $j^*$. Then, for $j=0,\dots,j^*$ we proceed as follows:

\begin{enumerate}[font = \bfseries, label = \arabic*.]
    \item Given $\phi_0^j$, set $\sigma_0^j = c_{0,\sigma} + c_{1,\sigma} \phi_0^j$ and $p_0^j = c_{0,p} + c_{1,p} \phi_0^j$.
    \item Run the forward system
    \begin{alignat*}{2}
	& \partial_t \phi - \lambda \Delta \phi + F'(\phi) - m(\sigma) \hh'(\phi) = 0 
	&& \hbox{in $Q_T$,} \\
	& \partial_t \sigma - \eta \Delta \sigma = S_h + S_{ch} \phi - \gamma_h \sigma - \gamma_{ch} \sigma \phi  
	\qquad && \hbox{in $Q_T$,} \\
	& \partial_t p - D \Delta p + \gamma_p p 
	= \alpha_h + \alpha_{ch} \phi 
	&& \hbox{in $Q_T$,} \\
	& \phi = 0, \quad \partial_{\n} \sigma = \partial_{\n} p = 0 
	&& \hbox{in $\Sigma_T$,} \\
	& \phi(0) = \phi_0^j, \quad \sigma(0) = \sigma_0^j, \quad p(0) = p_0^j
	&& \hbox{in $\Omega$,}
    \end{alignat*}
    and call the corresponding solution $(\phi^j, \sigma^j, p^j)$.
    \item Run the adjoint system
    \begin{alignat*}{2}
	& - \partial_t q - \lambda \Delta q + F''(\phi^j) q - m(\sigma^j) \hh''(\phi^j) q + \gamma_{ch} \sigma^j z - S_{ch} z - \alpha_{ch} r = 0 
	\qquad && \hbox{in $Q_T$,} \\
	& - \partial_t z - \eta \Delta z + \gamma_h z + \gamma_{ch} \phi^j z - m'(\sigma^j) \hh'(\phi^j) q = 0 
	&& \hbox{in $Q_T$,} \\
	& - \partial_t r - D \Delta r + \gamma_p r 
	= 0
	&& \hbox{in $Q_T$,} \\
	& q = 0, \quad \partial_{\n} z = \partial_{\n} r = 0 
	&& \hbox{in $\Sigma_T$,} \\
	& q(T) = \phi^j(T) - \phimeas, \quad z(T) = 0, \quad r(T) = 0
	&& \hbox{in $\Omega$,}
    \end{alignat*}
    and call the corresponding solution $(q^j, z^j, p^j)$.
    \item If $j=0$, set $\mu^0=0.2/q_M$ where $q_M$ is calculated from the global maximum and minimum control variables for $q_0^j$ (see Section~\ref{sec:sdisc}) as 
   \[q_M^j=\max \left\{ \max_{A=1,\dots,n_f}(q_A^j(0)), \left|\min_{A=1,\dots,n_f}(q_A^j(0))\right| \right\}.\]    
    Otherwise, for $j\geq 1$, calculate the next adaptive step size $\mu^j $ as
    \begin{equation}\label{eq:agd_update}
    \mu^j=\min\left\{ \sqrt{1+\theta^{j-1}}\mu^{j-1}, \frac{\norm{\phi_0^j-\phi_0^{j-1}}_{L^2(\Omega)}}{2\norm{q_0^j-q_0^{j-1}}_{L^2(\Omega)}} \right\}.   
    \end{equation}
    \item If $j=0$, set $\theta^0=+\infty$, otherwise update the internal parameter $\theta$ as
     \[ \theta^j = \frac{\mu^j}{\mu^{j-1}}. \]
    \item Calculate the new iterate of the initial tumour phase field as
    \[ \phi_0^{j+1}=\phi_0^j-\mu^jq_0^j. \]
    Check that the updated tumour phase field at $t=0$ verifies $0 \leq \phi_0^{j+1} \leq 1$; otherwise truncate $\phi_0^{j+1}$ accordingly to ensure that this iterate is admissible.
    \item Check if the same  two convergence criteria used for the Landweber iteration scheme in Section~\ref{sec:itshortT} are satisfied. Otherwise, restart from the top in a new iteration.
\end{enumerate}


\section{Simulation study}\label{sec:sims}

In this section, we perform a simulation study to explore the behaviour of the algorithms described in Sections~\ref{sec:itshortT} and \ref{sec:itlongT} to reconstruct the initial tumour phase field considering short and long time horizons respectively. The simulations of the forward prostate cancer model, the linearised problem, and the adjoint problem are carried out using the spatial and temporal discretisation schemes introduced in Sections~\ref{sec:sdisc} and \ref{sec:tdisc}. For simplicity of notation and unless otherwise indicated, in the simulation results provided herein, we drop the superindex $h$  that was introduced in Section~\ref{sec:sdisc} to denote finite-dimensional approximations.

\subsection{Computational scenario and setup}\label{compusetup}
\subsubsection{Model parameters}
We consider an aggressive case of prostate cancer, as detailed in Refs.~\cite{CGLMRR2019,CGLMRR2021}. This scenario could correspond to a tumour exhibiting an intermediate or high Gleason score, which is a fundamental histopathological metric associated with prostate cancer aggressiveness \cite{Mottet2021,Lorenzo2024}.
In the clinic, it is key to early identify these tumours to ensure adequate monitoring and treatment \cite{Mottet2021,Giganti2018,Giganti2021,Lorenzo2024}.
In the context of the reconstruction of the initial conditions of the cancer model considered herein, the aggressive case is also the most interesting: this tumour scenario leads to geometric changes of the tumour over time due to restricted access to the nutrients (see Ref.~\cite{Lorenzo2016,CGLMRR2019}), which pose a challenge for the estimation of the initial tumour phase-field map. We use the same parameters of the model as in Ref.~\cite{CGLMRR2021} for the simulations presented in this work, which are based on previous studies in the literature \cite{Berges1995,CGLMRR2019,Lorenzo2016,Lorenzo2017,Lorenzo2019,Lorenzo2024,Mottet2021,Schmid1993,Xu2016,Giganti2018,Giganti2021}.

\subsubsection{Numerical implementation details}
The numerical algorithms described in Section~\ref{sec:numeth} to solve the forward, linearised, and adjoint problems as well as to reconstruct the initial tumour phase field were implemented using our in-house isogeometric codes to simulate prostate cancer growth \cite{Lorenzo2016,Lorenzo2017,Lorenzo2019,CGLMRR2019,CGLMRR2021}. These codes were developed following the general directions and algorithms provided in Ref.~\cite{Cottrell2009}.

The numerical simulations for the reconstruction algorithms considered in this work are performed in a square tissue patch with side length $L_d=$\SI{3000}{\micro\meter}. This computational domain is discretised using 256 $C^1$-continuous quadratic B-spline elements per side. We further consider a constant time step, which is set to $\Delta t_n=0.1$ days for the forward and linearised problems. As the adjoint problem runs backwards in time, we set $\Delta t_n=-0.1$ days for its time discretisation.

We set the tolerance of the Newton-Rapshon method to $\eps_{NR}=10^{-3}$. The convergence of the GMRES method is set to a tolerance of $\eps_{GMRES}=10^{-3}$ and a maximum of 500 iterations. The convergence of the Landweber iteration scheme and the adaptive gradient descent algorithm used to reconstruct the initial tumour phase field is fixed at a tolerance of $\eps_{SD}=10^{-4}$ and a maximum of 500 iterations for both methods.

\subsubsection{Ground truth}
To assess the performance of the tumour phase field reconstruction methods described in Sections~\ref{sec:itshortT} and \ref{sec:itlongT}, we consider a reference simulation of the prostate cancer model.
This \emph{in silico} ground truth is generated with a finer mesh within the same spline space defined above.
In particular, we use twice the number of elements along each spatial direction (i.e., $512\times 512$ elements).
We consider an initial phase field configuration consisting of an ellipsoidal tumour placed in the centre of the computational domain (i.e., $(x_c,y_c)=(L_d/2, L_d/2)$) and with semi-axes $a=$\SI{150}{\micro\meter} and $b=$\SI{200}{\micro\meter} parallel to the sides of the domain. We implement this initial condition \emph{via} the $L^2$-projection of the hyperbolic tangent function
\begin{equation}\label{eq:iniphi}
\phi_0(x_1,x_2) = \frac{1}{2} - \frac{1}{2}\tanh\left( 10\left(\sqrt{\frac{(x_1-x_c/2)^2}{a^2} + \frac{(x_2-y_c/2)^2}{b^2}} - 1\right) \right)
\end{equation}
over the $C^1$-continuous quadratic B-spline space supporting our spatial discretisation. 
This operation provides the control variables $\phi_{0,A}=\phi_{A}(0)$, $A=1,\ldots,n_f$, for the spline representation of the phase-field initial condition, i.e., $\phi^h_0(x)=\phi^h(0,x)=\sum_{A=1}^{n_f}\phi_{0,A}N_A(x)$ (see Section~\ref{sec:sdisc}).
In particular, this ground truth simulation also provides the terminal measurement $\phimeas$.
Additionally, we analyse the performance of the reconstruction algorithms in the presence of noise. 
Towards this end, we affected the tumour measurement at the time horizon $\phimeas$ with 10\% Gaussian noise, which was applied to the control variables $\phi_{\text{meas},A}>0.001$.
This implementation follows the usual thresholding segmentation approaches used in the construction of tumour-defining spatial maps from imaging data for their ensuing use in biophysical modelling \cite{Lorenzo2024,agosti2018personalized}.

\subsubsection{Initial guess}\label{sec:iniguess}
To start the reconstruction algorithms, the initial guess $\phi_0^0$ consists of a small spherical tumour implemented using Eq.~\eqref{eq:iniphi} with radius $a=b=$\SI{100}{\micro\meter} and centre coordinates $(x_c,y_c)$ matching the centre of mass of the tumour phase field map $\phimeas$ measured at $t=T$, which is used to define the objective functional of the minimisation problem (see Eq.~\eqref{eq:minproblemsimu}). The rationale for this choice is having a sufficiently small tumour aligned with the observed tumour configuration at $t=T$ as a reasonable starting point to facilitate the iterative reconstruction of $\phi_0^0$. 

\subsubsection{Metrics to assess the reconstruction of the tumour phase field}
We assess the tumour phase field at $t=0$ and $t=T$ in terms of a panel of four metrics that are commonly used in the literature of mathematical models of cancer \cite{Lorenzo2022_review,Lorenzo2024, Hormuth2021,Jarrett2018,Wong2016}. First, we use the relative error in tumour volume $V_\phi$. The latter is calculated \emph{via} spatial integration of the tumour phase field over the volume enclosed by the isosurface $\phi=0.5$, which we denote by $\Omega_{\phi}$. This isosurface implicitly tracks the interface between healthy and tumour tissue \cite{Lorenzo2016,Lorenzo2019,CGLMRR2019,CGLMRR2021}. Thus, we calculate $V_\phi$ as 
\begin{equation*}
    V_\phi = \int_{\Omega_{\phi}} \, \de x, 
\end{equation*}
and the relative error in tumour volume as
\begin{equation*}
    e_{V,t}=e_V (t)= \frac{V_\phi^{ref}(t)-V_\phi^{rec}(t)}{V_\phi^{ref}(t)},
\end{equation*}
where $V_\phi^{ref}$ is the volume of the tumour calculated from the tumour phase field of the reference (i.e., ground truth) simulation $\phi^{ref}$ and $V_\phi^{rec}$ is the volume of the tumour calculated from the reconstructed tumour phase field $\phi^{rec}$.
Second, we use the S{\o}rensen-Dice similarity coefficient (DSC) to assess whether the volume of the reconstructed tumour phase field matches the one from the reference configuration at the same time \cite{Hormuth2021,Wong2016,Lorenzo2024}. Towards this end, we calculate the DSC at time $t$ as
\begin{equation*}
    \mathrm{DSC}_t=\mathrm{DSC}(t)=\frac{2V_\phi^{int}(t)}{V_\phi^{rec}(t)+V_\phi^{ref}(t)},
\end{equation*}
where $V_\phi^{int}$ is the volume of the intersection of the tumour phase fields from the ground truth and resulting from the reconstruction procedure. The volumes to calculate the DSC are also calculated with respect to the corresponding isosurfaces $\phi=0.5$ for the reference and reconstructed tumour phase fields. Third, we use the relative $L^2$ error of the reconstructed tumour phase field with respect to its counterpart in the reference simulation. Hence, the $L^2$ error is calculated as
\begin{equation*}
    e_{L^2,t}=e_{L^2}(t) = \frac{ \norm{\phi^{ref}(t) - \phi^{rec}(t)}_{\Lx2} }{ \norm{ \phi^{ref}(t) }_{\Lx2} }
\end{equation*}
Finally, we also calculate the concordance correlation coefficient (CCC) \cite{Lin1989,Hormuth2021,Jarrett2018,Lorenzo2024} to assess the pointwise agreement between the reference and reconstructed tumour phase fields as
\begin{equation*}
    \mbox{CCC}_t= \mbox{CCC}(t)=\frac{2 \mbox{Cov}(\phi^{ref}(t),\phi^{rec}(t))}{\mbox{Var}(\phi^{ref}(t)) +\mbox{Var}(\phi^{rec}(t)) + (\mbox{Mean}(\phi^{ref}(t))-\mbox{Mean}(\phi^{rec}(t)))^2},
\end{equation*}
where the covariance, variances, and means of the reference and reconstructed tumour phase fields are calculated over the volume enclosed by the corresponding $\phi=0.5$ isosurfaces.

\subsection{Reconstruction of the tumour phase field for short time horizons}\label{res:shortt}

\begin{figure}[!t]   
    \includegraphics[width=\textwidth]{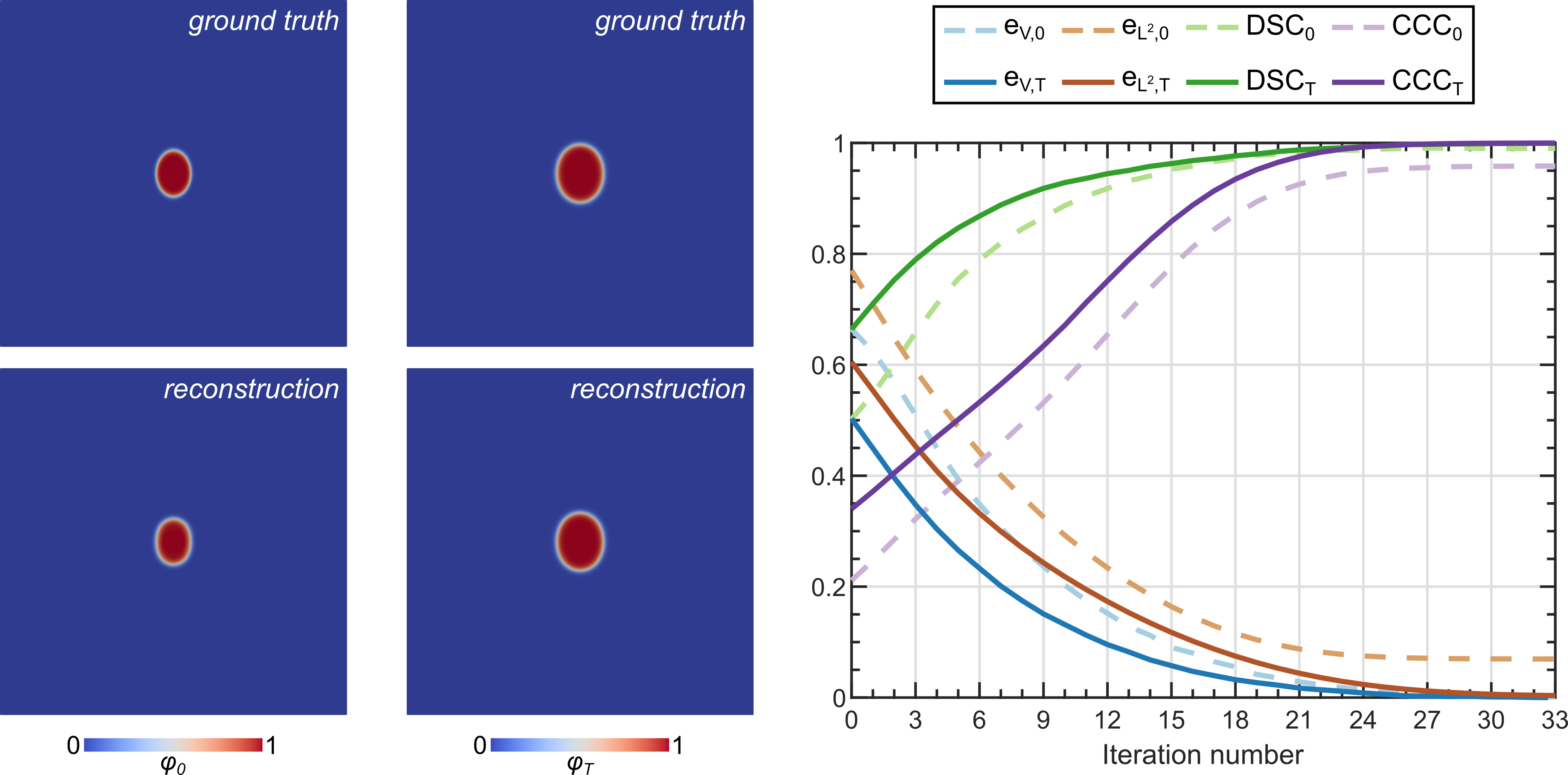} 
    \caption{Reconstruction of the initial tumour phase field from a measurement at $T=15$ days using the Landweber iteration scheme. The first two columns compare the tumour phase field from the reference simulation and the corresponding reconstruction at $t=0$ and $t=T$. The plot in the last column provides the values of the four metrics used to assess the reconstruction of the tumour phase field in each iteration of the Landweber algorithm. }
    \label{fig:landwebert15_metrics}
  \end{figure}

  \begin{figure}[!t]   
    \includegraphics[width=\textwidth]{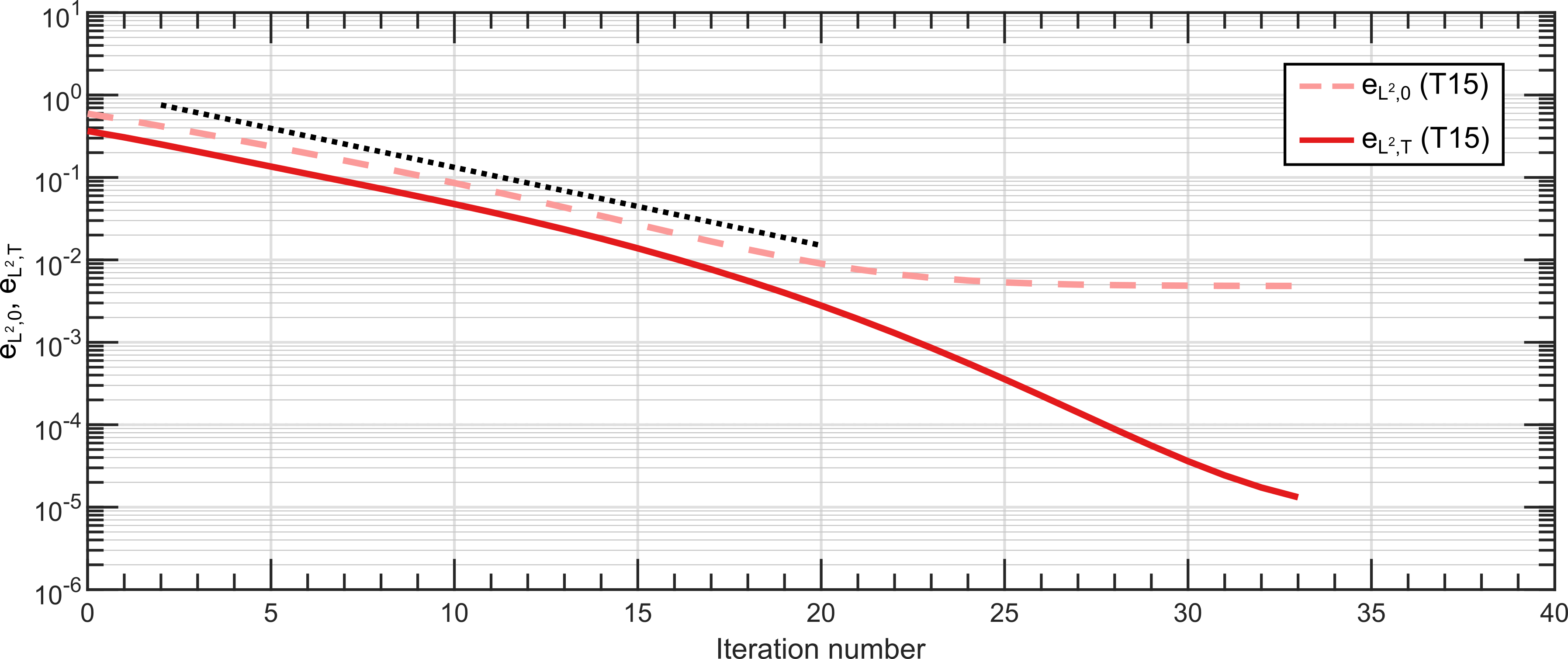} 
    \caption{Convergence of the Landweber algorithm for $T= 15$ days. This figure provides the changes of the relative $L^2$ error of the reconstructed tumour phase field at $t=0$ and $t=T$. For $e_{L^2,0}$, we also plot a straight dotted line with the same slope as the linear portion of the trajectory of the relative $L^2$ error. The value of this slope is -9.47$\cdot 10^{-2}$, which corresponds to a value of parameter $c$ of 1.96$\cdot 10^{-1}$ in Theorem~\ref{thm:landweber}. Thus, the convergence in all scenarios is infralinear, as it was found theoretically.}
    \label{fig:landweber_conv}
  \end{figure}

We analyse the performance of the Landweber iteration scheme in a computational scenario with a short time horizon of $T=15$ days.
Figure~\ref{fig:landwebert15_metrics} provides the reconstruction of the tumour phase field at $t=0$ and at the time horizon $t=T$, as well as a comparison of these results to the ground truth simulation at the same time points. 
Qualitatively, we observe that the Landweber method yields a very good reconstruction of the tumour phase field at $t=0$, and that this results in an excellent match to the measurement of the tumour phase field from the ground truth at $t=T$, which was used to drive the inverse problem.
Figure ~\ref{fig:landwebert15_metrics} also provides the values of the metrics used to assess the reconstruction of the tumour phase field at both $t=0$ and $t=T$ in each iteration of the Landweber scheme until reaching convergence.
These quantitative results demonstrate that this method achieves an accurate reconstruction of the tumour phase field at both $t=0$ and $t=T$. 
Indeed, these quantitative results show a minimal error at $t=T$, thereby confirming that the reconstruction algorithm yields a perfect match to the available measurement of the tumour phase field at the time horizon.
Furthermore,  we also obtain a very small error in the tumour volume and a high DSC for the reconstructed tumour phase field at $t=0$, which indicate an excellent reconstruction of the geometry of the initial tumour.
While the CCC and the $L^2$ error at $t=0$ reveal some mismatch between the reconstructed tumour and the ground truth at $t=0$, these results still represent a successful recovery of the tumour phase field as compared with the corresponding values of these metrics obtained in the assessment of model calibration and forecasting in the literature \cite{Wu2022, Hormuth2021, Lorenzo2024, Wong2016}.
Moreover, the trajectory of the values of all assessment metrics during the iterations plateaus towards the end of the reconstruction procedure, which suggests the achievement of sufficient convergence for the chosen tolerance.
In Figure~\ref{fig:landweber_conv} we provide more detail of the convergence rate of the Landweber iteration scheme according to the relative $L^2$ error.
In particular, the slope of the linear part of the trajectory of the relative $L^2$ error of the reconstructed initial tumour phase field along the Landweber iterations is -9.47$\cdot 10^{-2}$. 
This slope also corresponds to a value of parameter $c$ in Theorem~\ref{thm:landweber} of  1.96$\cdot 10^{-1}$.
Thus, Figure~\ref{fig:landweber_conv}  shows that the convergence rate of the Landweber reconstruction algorithm is infralinear in accordance with Theorem~\ref{thm:landweber}.

\begin{figure}[!t]
    \begin{subfigure}[t]{0.2\textwidth}
      \includegraphics[width=\textwidth]{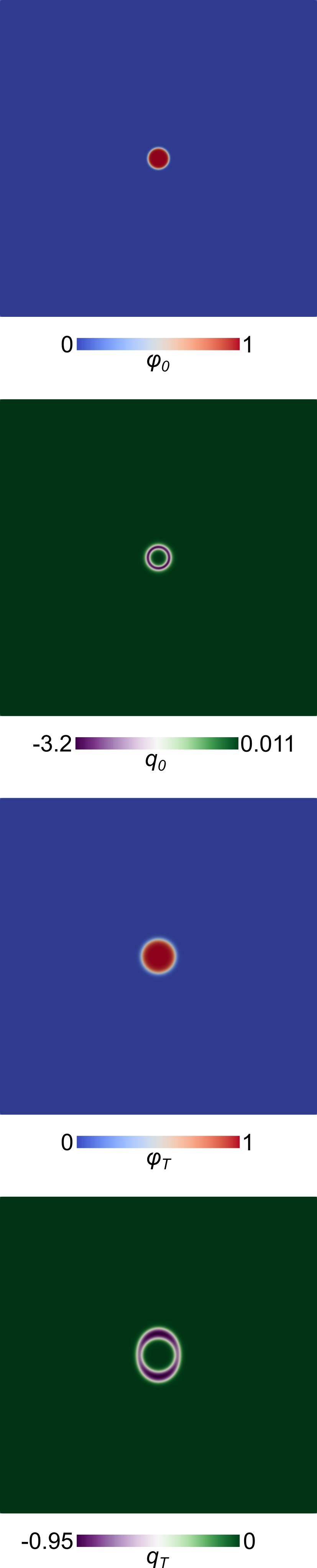}
      \caption{$j=0$}
    \end{subfigure}
    \hfill
    \begin{subfigure}[t]{0.2\textwidth}
      \includegraphics[width=\textwidth]{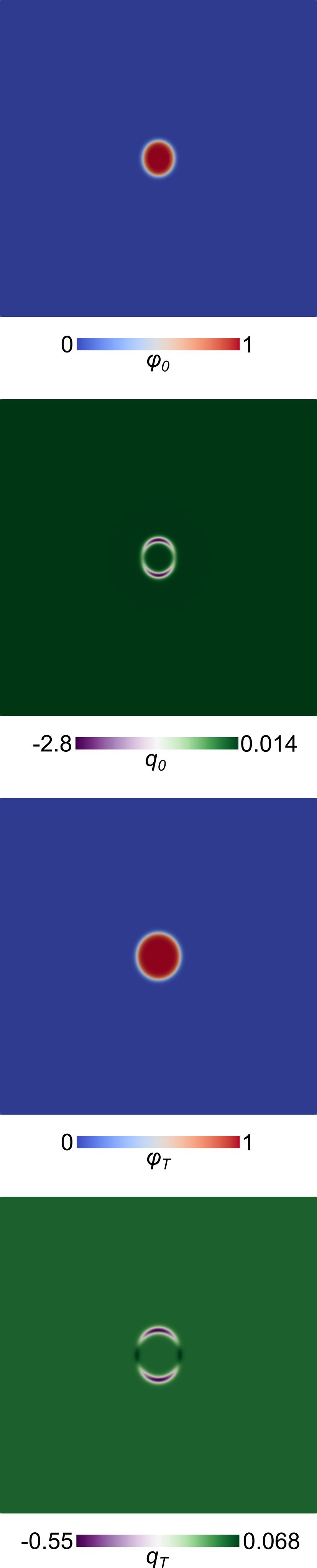}
      \caption{$j=11$}
    \end{subfigure}
    \hfill
    \begin{subfigure}[t]{0.2\textwidth}
      \includegraphics[width=\textwidth]{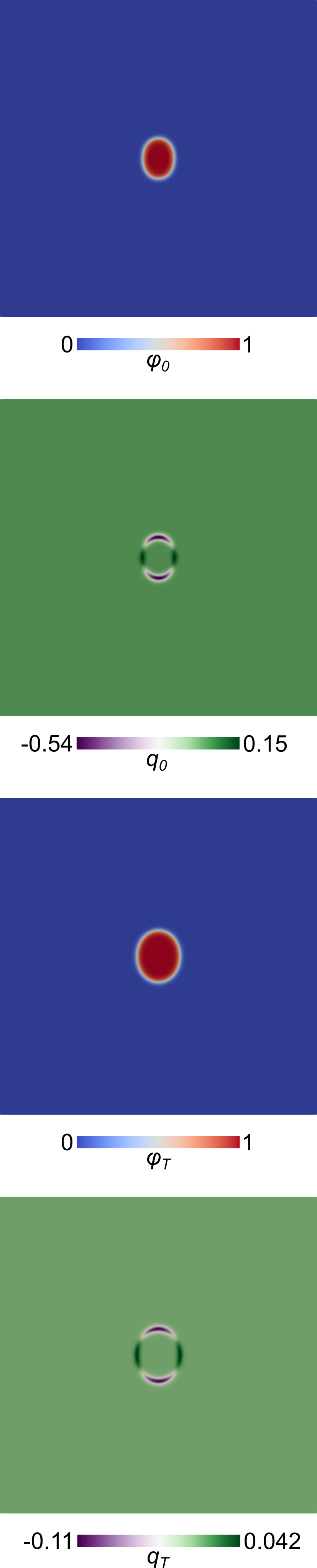}
      \caption{$j=22$}
    \end{subfigure}
    \hfill
    \begin{subfigure}[t]{0.2\textwidth}
      \includegraphics[width=\textwidth]{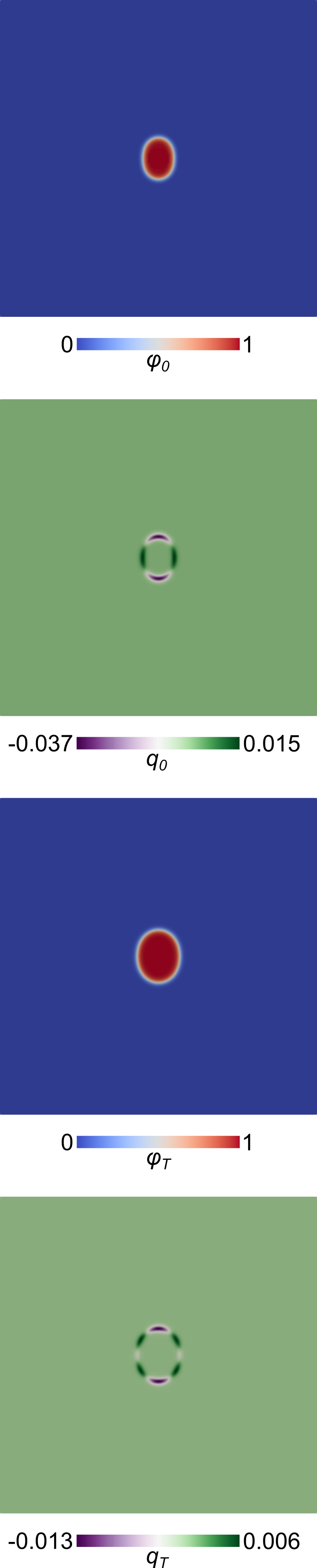}
      \caption{$j=33$}
    \end{subfigure}
    \hfill
    \caption{Reconstruction of the initial tumour phase field from a measurement at $T=15$ days using the Landweber iteration scheme. In each panel, the first two rows represent the tumour phase field and its corresponding adjoint variable at $t=0$ (i.e, $\phi_0$ and $q_0$), while the last two rows provide the same quantities at $t=T$ (i.e, $\phi_T$ and $q_T$). }
    \label{fig:landwebert15_iters}
  \end{figure}

To further understand the reconstruction procedure with the Landweber iteration scheme, Figure~\ref{fig:landwebert15_iters} shows the tumour phase field and its associated dual variable ($q$, which is used to update the iterates of the initial tumour phase field) at $t=0$ and $t=T$ during several iterations of the Landweber algorithm.
The results depicted in Figure~\ref{fig:landwebert15_iters} show how this reconstruction method progressively updates the starting guess of the initial tumour phase field to optimally match the measurement of the tumour at the time horizon.
We observe that the initial guess of the tumour is first increased in size in all directions of space.
Once the iterates of $\phi_0$ approach the size of the smallest axis of the ellipsoidal tumour in the ground truth at $t=0$, the algorithm then continues enlarging $\phi_0$ in the direction of the longest axis. 
Then, as the reconstruction procedure approaches convergence, the updates of the initial tumour phase field are increasingly focused along the interface between tumour and healthy tissue.
We note that these updates are obtained by solving the dual problem, which is initialised using the mismatch between the tumour phase field and its corresponding measurement at time $t=T$.
Hence, the spatial map of $q_T$ in the computational scenarios considered herein initially shows values between -1 and 0, in which the negative values indicate regions that need to be occupied by the tumour.
Towards this end, solving the dual problem maps this mismatch at time $t=T$ into precise updates of the tumour phase field at $t=0$.
As the reconstruction method proceeds, the mismatch between the tumour measurement and model reconstruction at the time horizon progressively diminishes, which progressively reduces the range of values of $q_T$ towards zero.
This results in a parallel contraction of the value range and increased localisation of  $q_0$ to update the initial tumour phase field.
Additionally, the Landweber adaptive step size exhibits a median (range) of 3.53$\cdot 10^{-2}$ (2.70$\cdot 10^{-2}$, 9.09$\cdot 10^{-2}$).
Hence, the product of these step size values by the increasingly lower values of $q_0$ during the iterations of the Landweber algorithm results in a progressively smaller update to the initial tumour phase field, which further translates into minimal changes to $\phi_T$ as the algorithm approaches convergence.

\subsection{Reconstruction of the tumour phase field for long time horizons}\label{res:longt}

\begin{figure}[!t]   
    \includegraphics[width=\textwidth]{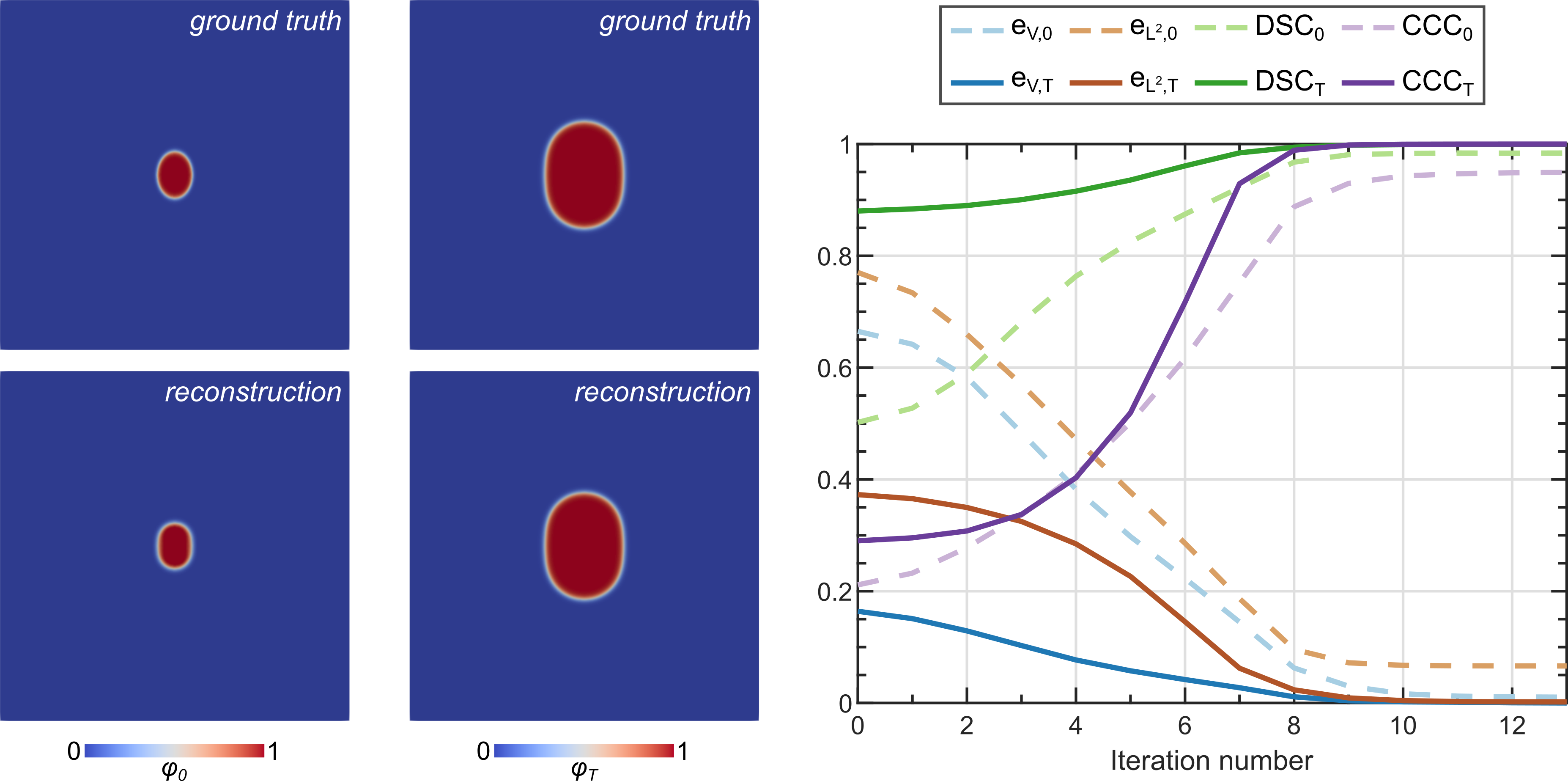} 
    \caption{Reconstruction of the initial tumour phase field from a measurement at $T=90$ days using the adaptive gradient descent algorithm. The first two columns compare the tumour phase field from the reference simulation and the corresponding reconstruction at $t=0$ and $t=T$. The plot in the last column provides the values of the four metrics used to assess the reconstruction of the tumour phase field in each iteration of the adaptive gradient descent algorithm.}
    \label{fig:agd90_metrics}
  \end{figure}

\begin{figure}[!t]   
    \includegraphics[width=\textwidth]{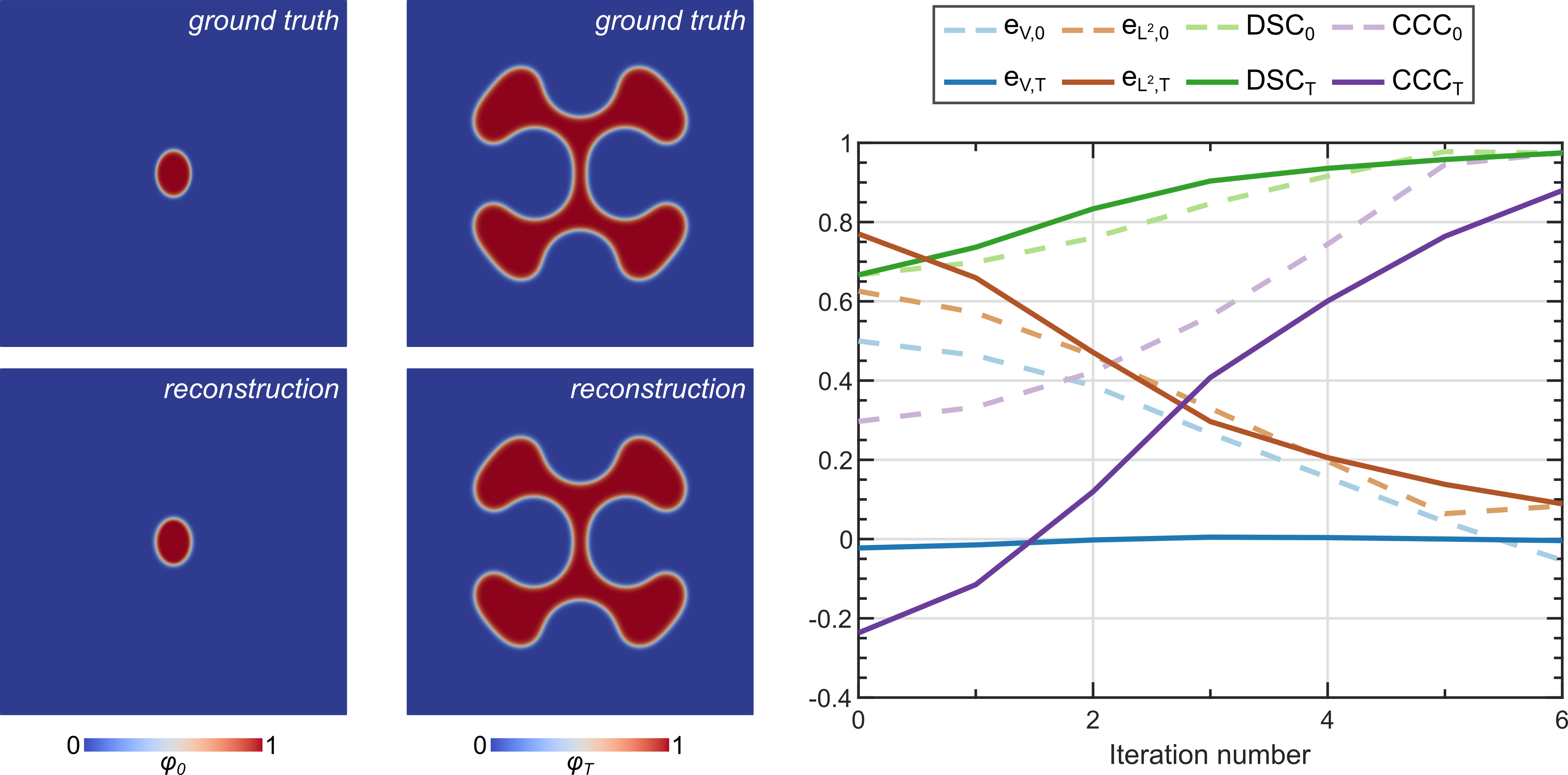} 
    \caption{Reconstruction of the initial tumour phase field from a measurement at $T=365$ days using the adaptive gradient descent algorithm. The first two columns compare the tumour phase field from the reference simulation and the corresponding reconstruction at $t=0$ and $t=T$. The plot in the last column provides the values of the four metrics used to assess the reconstruction of the tumour phase field in each iteration of the adaptive gradient descent algorithm.}
    \label{fig:agd365_metrics}
  \end{figure}

    \begin{figure}[!t]
    \begin{subfigure}[t]{0.2\textwidth}
      \includegraphics[width=\textwidth]{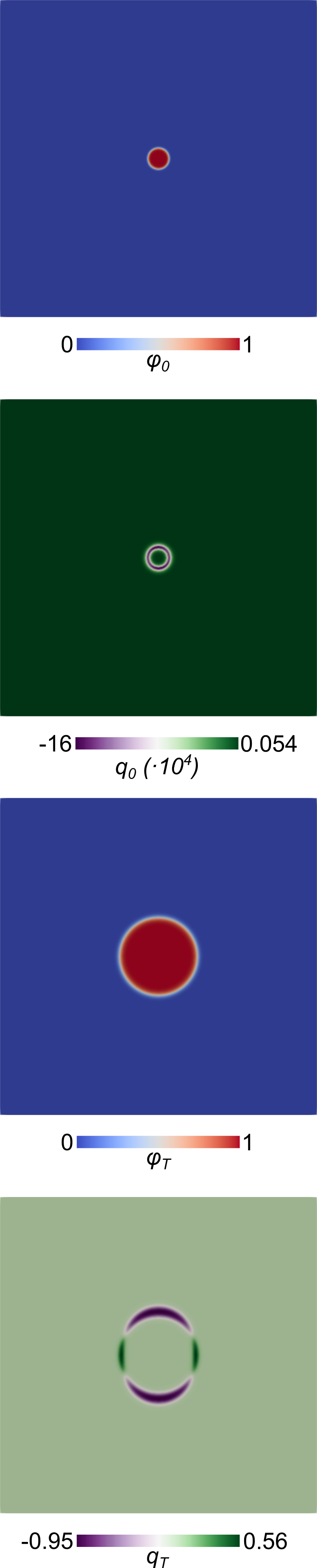}
      \caption{$j=0$}
    \end{subfigure}
    \hfill
    \begin{subfigure}[t]{0.2\textwidth}
      \includegraphics[width=\textwidth]{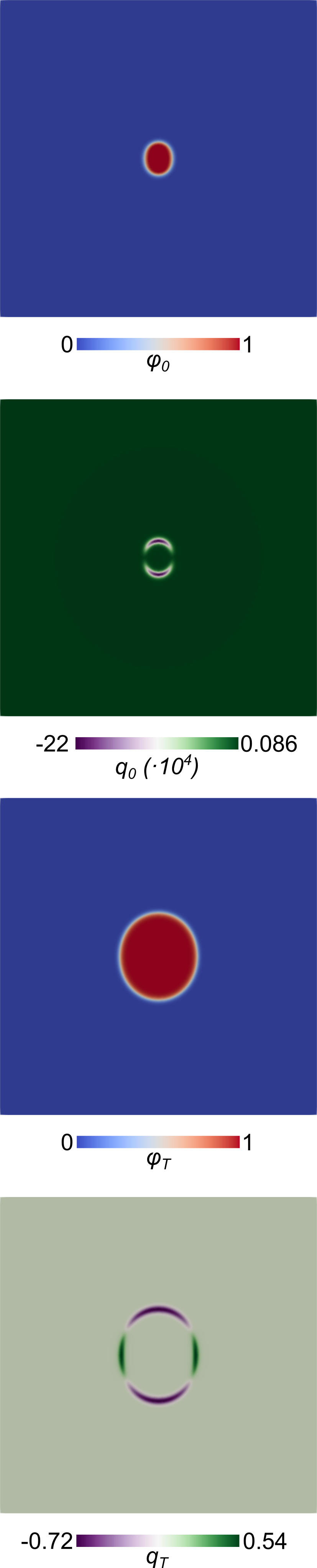}
      \caption{$j=5$}
    \end{subfigure}
    \hfill
    \begin{subfigure}[t]{0.2\textwidth}
      \includegraphics[width=\textwidth]{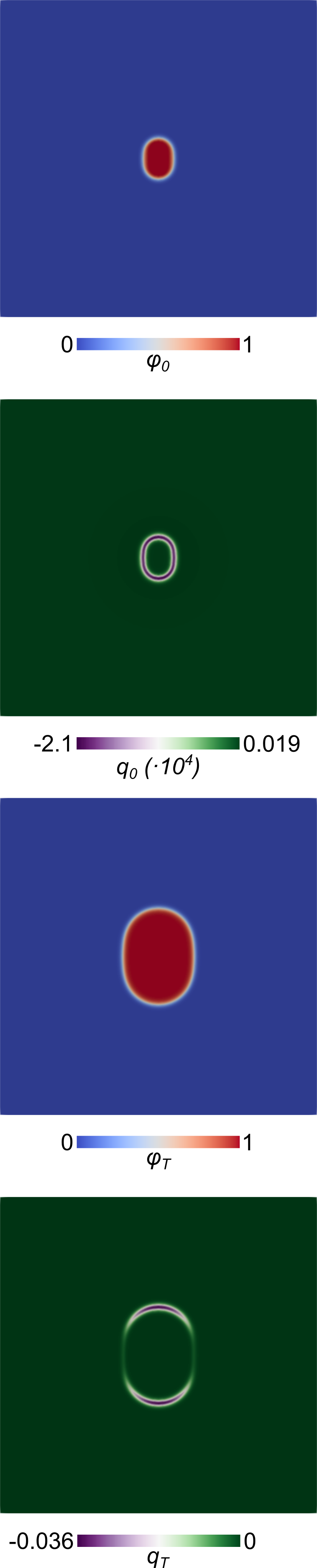}
      \caption{$j=9$}
    \end{subfigure}
    \hfill
    \begin{subfigure}[t]{0.2\textwidth}
      \includegraphics[width=\textwidth]{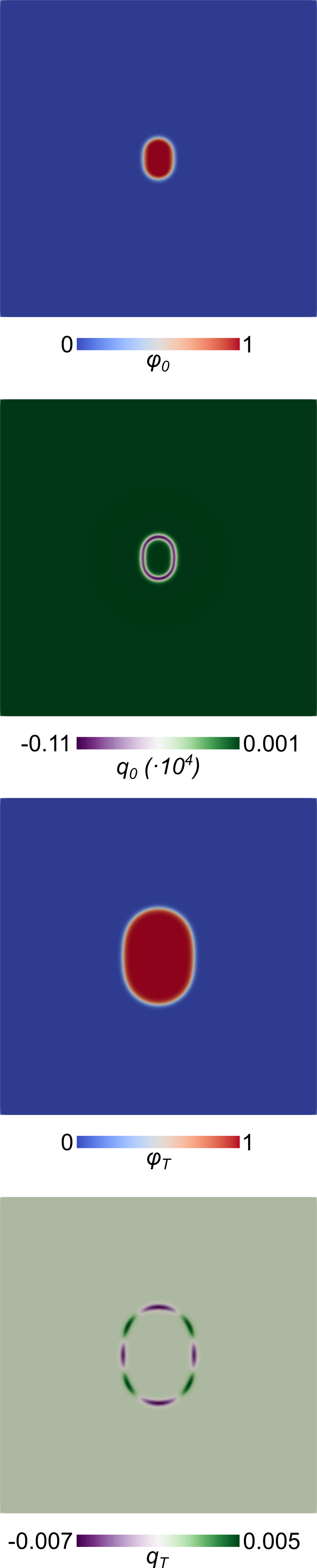}
      \caption{$j=13$}
    \end{subfigure}
    \hfill
    \caption{Reconstruction of the initial tumour phase field from a measurement at $T=90$ days using the adaptive gradient descent algorithm. In each panel, the first two rows represent the tumour phase field and its corresponding adjoint variable at $t=0$ (i.e, $\phi_0$ and $q_0$), while the last two rows provide the same quantities at $t=T$ (i.e, $\phi_T$ and $q_T$). }
    \label{fig:agd90_iters}
  \end{figure}

    \begin{figure}[!t]
    \begin{subfigure}[t]{0.2\textwidth}
      \includegraphics[width=\textwidth]{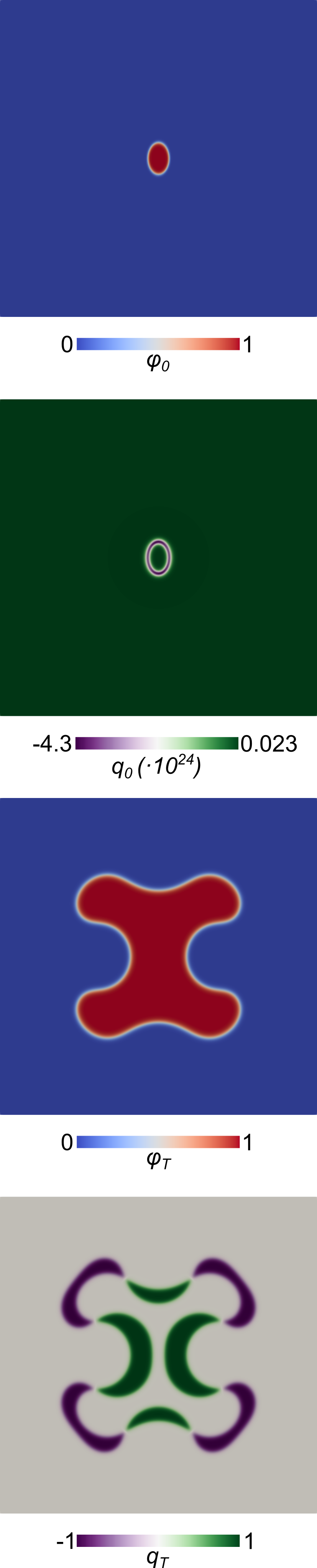}
      \caption{$j=0$}
    \end{subfigure}
    \hfill
    \begin{subfigure}[t]{0.2\textwidth}
      \includegraphics[width=\textwidth]{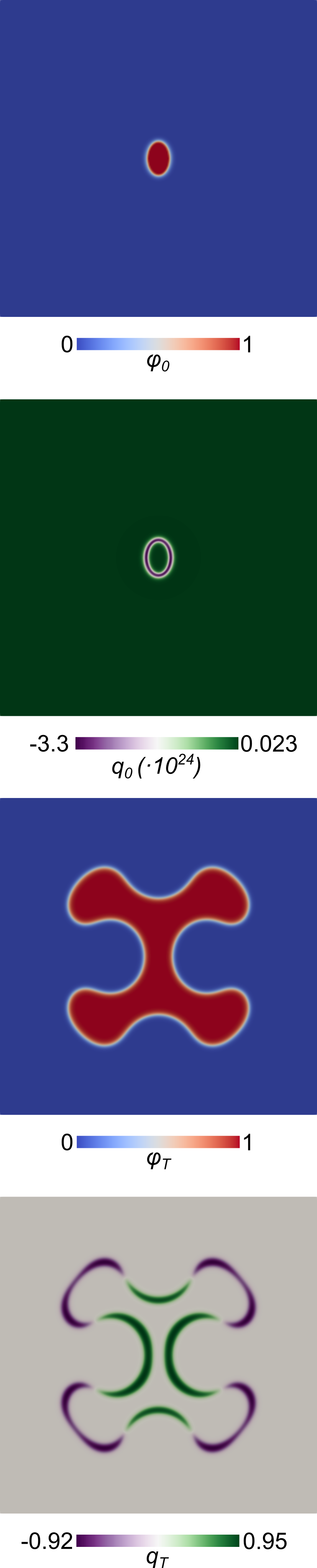}
      \caption{$j=2$}
    \end{subfigure}
    \hfill
    \begin{subfigure}[t]{0.2\textwidth}
      \includegraphics[width=\textwidth]{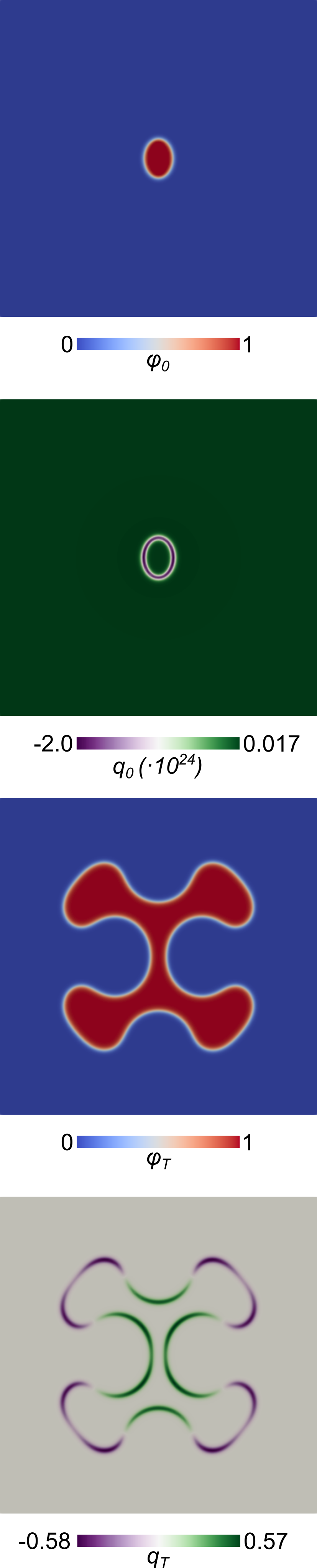}
      \caption{$j=4$}
    \end{subfigure}
    \hfill
    \begin{subfigure}[t]{0.2\textwidth}
      \includegraphics[width=\textwidth]{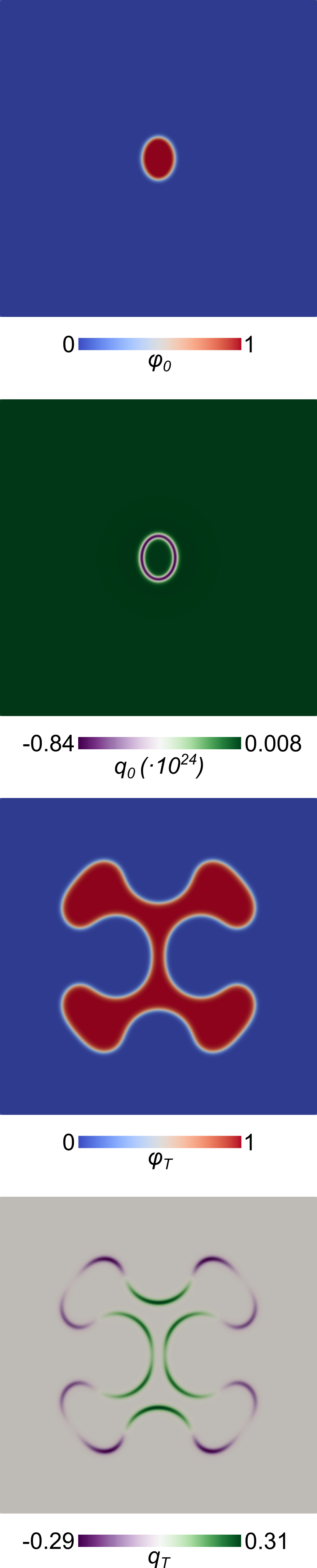}
      \caption{$j=6$}
    \end{subfigure}
    \hfill
    \caption{Reconstruction of the initial tumour phase field from a measurement at $T=365$ days using the adaptive gradient descent algorithm. In each panel, the first two rows represent the tumour phase field and its corresponding adjoint variable at $t=0$ (i.e, $\phi_0$ and $q_0$), while the last two rows provide the same quantities at $t=T$ (i.e, $\phi_T$ and $q_T$).}
    \label{fig:agd365_iters}
  \end{figure}

We study the performance of the adaptive gradient descent algorithm in two computational scenarios with a long time horizon of $T=90$ and $365$ days.
Figure~\ref{fig:agd90_metrics} shows that this method can reconstruct the earlier state of the tumour considering a time horizon of $T=90$ days efficiently and accurately.
Additionally, the results in Figure~\ref{fig:agd365_metrics} confirm that the adaptive gradient descent method can reconstruct the tumour state a year before acquiring the spatial measurement (i.e., $T=365$ days).
Nevertheless, for this last scenario, we increased the tolerance $\varepsilon_{SD}$ to $0.1$ and we had to change the initial guess from a small round tumour (as indicated in Section~\ref{sec:iniguess}) to an ellipsoidal tumour with larger vertical axis ($a=$\SI{100}{\micro\meter}, $b=$\SI{150}{\micro\meter}).
The tolerance increment is implemented based on the results of a first simulation with the original value indicated in Section~\ref{compusetup}, which showed that it was too strict for this case.
This issue could be motivated by the drastic geometric changes of the tumour phase field at $t=T$, which may accumulate small local reconstruction errors over a large tumour volume.
The rationale for adapting the initial guess is that the round tumour used in previous simulations led to a failure of the reconstruction algorithm, whereby the size of $\phi_0$ progressively decreases in each iteration of the adaptive gradient descent method until vanishing.
We believe that this issue stems from a choice of an initial guess that is too far from the ground truth in this computational scenario.
Thus, we opted to change the initial guess according to the known dynamics of the model.
In particular, the simulations of the model presented in \cite{CGLMRR2019} show that the horizontal tumour branches observed at $t=T$ come from an ellipsoidal tumour with a longer axis in the orthogonal direction to the branches at time $t=0$, as observed in the simulation used to generate the ground truth in this work.
After adjusting the initial guess to match this knowledge from the model dynamics, we observe an excellent performance of the adaptive gradient descent.
We would like to remark that the logic leveraged to adjust the initial guess can also be followed in actual clinical scenarios: given a spatial tumour measurement and a known model, one can choose the initial guess of the early state of the tumour phase field that favours a model prediction of the tumour morphology observed in the measurement.

Figures~\ref{fig:agd90_metrics} and \ref{fig:agd365_metrics} demonstrate that the adaptive gradient descent method achieves a high qualitative and quantitative agreement between the reconstructed tumour phase field and the ground truth both at $t=0$ and $t=T$ in the two computational scenarios considered in this section.
As observed for the Landweber method, the reconstruction yields very good results at time $t=T$ according to all metrics.
While the DSC and relative error in the tumour volume at $t=0$ show that the reconstructed tumour at $t=0$ is practically identical to the ground truth, the CCC and the relative $L^2$ error suggest that there is a certain mismatch.
Nevertheless, the values of these metrics at time $t=0$ still represent a successful reconstruction of the initial tumour morphology, and they are also comparable to prediction errors obtained in the computational tumour forecasting studies in the literature \cite{Hormuth2021, Wong2016, Wu2022, Lorenzo2024}.
For the $T=90$ day scenario, the plateauing trend in all metrics at $t=0$ and $t=T$ suggests that the adaptive gradient descent method has already reached convergence for the tolerance considered in the simulation.
Thus, these results further support the use of a larger tolerance to obtain equally acceptable reconstruction results in these scenarios and, hence, facilitate convergence in more demanding cases.
Indeed, this is the case of the $T=365$ day scenario, in which we observe a progressive improvement of the metrics over the iterations of the adaptive gradient descent algorithm to satisfactory values according to computational oncology literature \cite{Hormuth2021, Wong2016, Wu2022, Lorenzo2024}.
In this computational scenario, only DSC$_0$, DSC$_T$, CCC$_0$, and e$_{V,T}$ start to reach a plateau towards the end of the simulation.
The lack of a generalised plateauing trend results from the earlier termination of the adaptive gradient descent algorithm caused by the increase in convergence tolerance $\varepsilon_{SD}$. 

Figures~\ref{fig:agd90_iters} and \ref{fig:agd365_iters} further show the changes in the tumour phase field $\phi$ and the dual variable $q$ at times $t=0$ and $t=T$ obtained in four iterations of the reconstruction procedure in the two computational scenarios in this section.
The results for $T=90$ days are similar to those obtained with the Landweber method: the updates of the initial tumour phase field $\phi_0$ obtained from $q_0$ initially enlarge the tumour in both directions of space, then modify the tumour shape in the direction with larger mismatch with respect to the ground truth (i.e., in the vertical direction), and finally adjust the morphology of $\phi_0$ along the interface.
In the $T=365$ day scenario, we observe that the initial ellipsoidal tumour progressively adjusts its shape while increasing its size until the algorithm reaches convergence.
Additionally, for both computational scenarios in this section, we observe that the absolute values and range of the dual variable $q$ at both $t=0$ and $t=T$ decrease globally as the reconstruction advances and the mismatch between the tumour reconstruction and the ground truth diminishes, as we had also observed with the Landweber method.
Figures~\ref{fig:agd90_iters} and \ref{fig:agd365_iters}  also show that the values of $q_0$ increase as we consider a higher time horizon $T$.
To ensure a stable update of the initial tumour phase field, the adaptive gradient descent algorithm produces a median (range) value of the adaptive step size of 1.51$\cdot 10^{-6}$ (9.76$\cdot 10^{-7}$, 1.96$\cdot 10^{-6}$) and 1.02$\cdot 10^{-25}$ (4.24$\cdot 10^{-26}$, 1.58$\cdot 10^{-25}$) for $T=90$ and 365 days, respectively.

\subsection{Reconstruction of the tumour phase field using ground truth affected by noise}\label{res:noise}

\begin{figure}[!t]   
    \includegraphics[width=\textwidth]{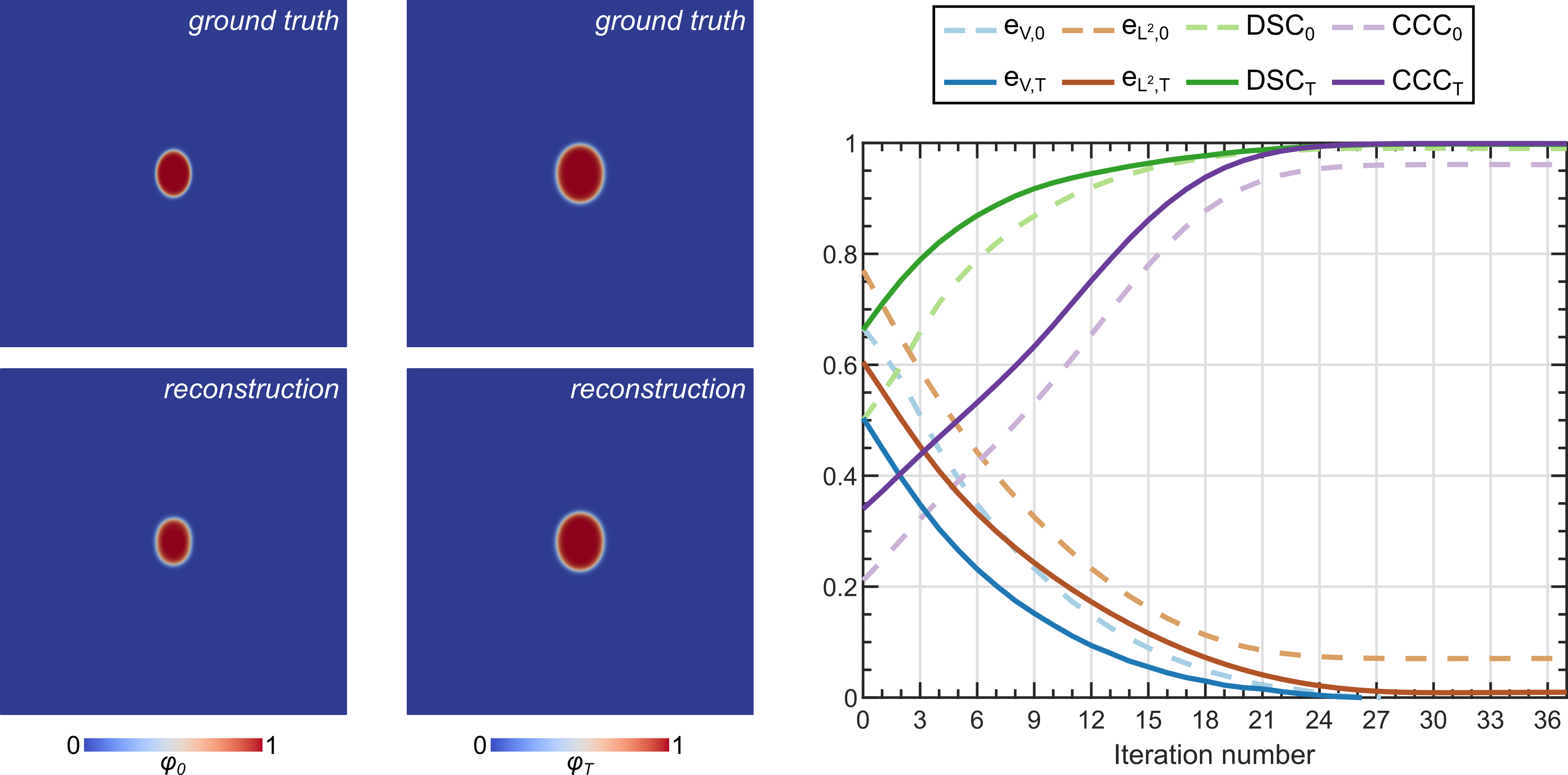} 
    \caption{Reconstruction of the initial tumour phase field from a measurement at $T=15$ days affected by $10\%$ Gaussian noise using the Landweber iteration scheme. The first two columns compare the tumour phase field from the reference simulation without noise and the corresponding reconstruction at $t=0$ and $t=T$. The plot in the last column provides the values of the four metrics used to assess the reconstruction of the tumour phase field in each iteration of the Landweber algorithm. }
    \label{fig:noise_landwebert15_metrics}
  \end{figure}

  \begin{figure}[!t]   
    \includegraphics[width=\textwidth]{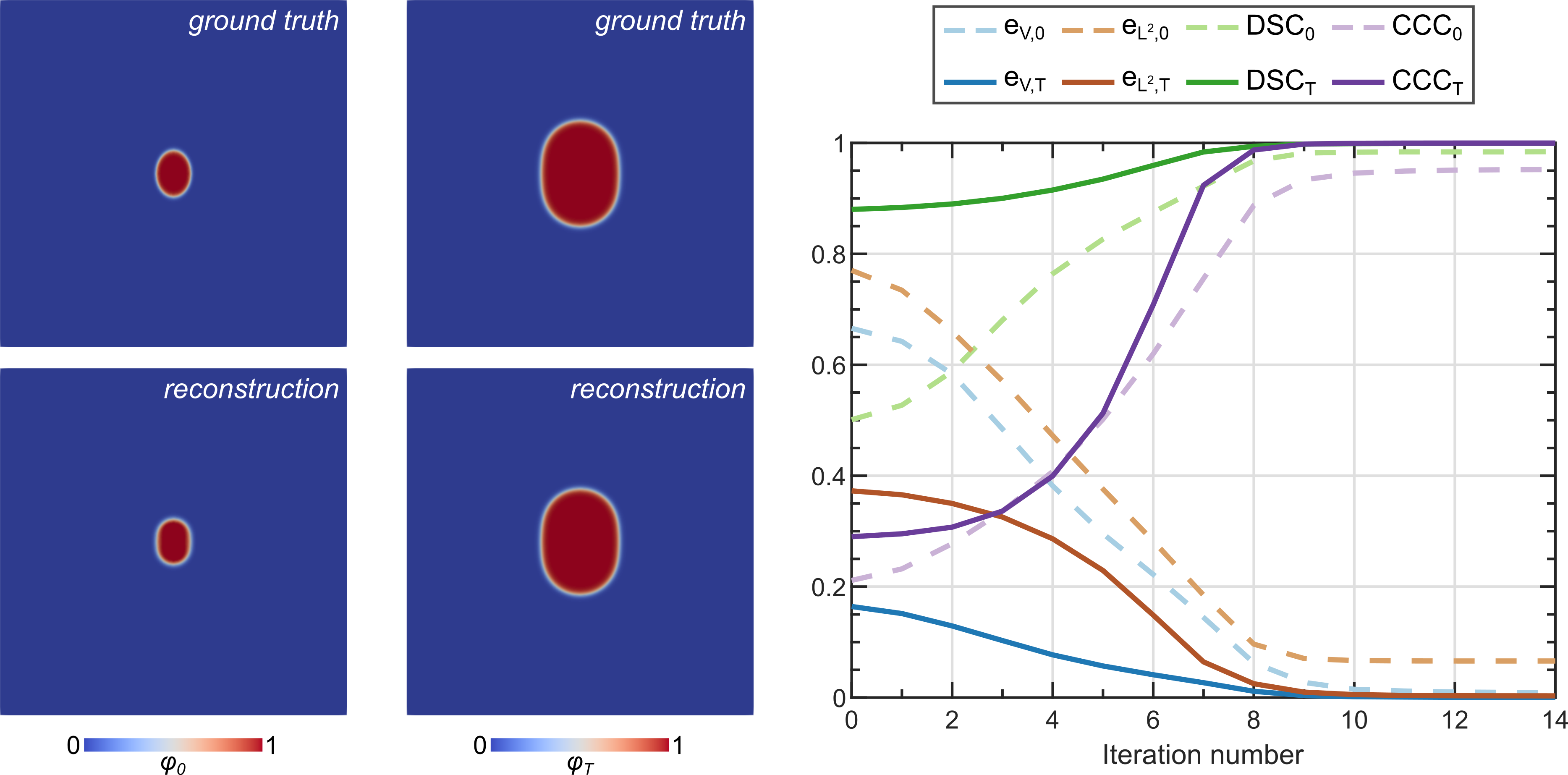} 
    \caption{Reconstruction of the initial tumour phase field from a measurement at $T=90$ days affected by $10\%$ Gaussian noise by leveraging the adaptive gradient descent algorithm. The first two columns compare the tumour phase field from the reference simulation without noise and the corresponding reconstruction at $t=0$ and $t=T$. The plot in the last column provides the values of the four metrics used to assess the reconstruction of the tumour phase field in each iteration of the adaptive gradient descent algorithm.}
    \label{fig:noise_agd90_metrics}
  \end{figure}

  \begin{figure}[!t]   
    \includegraphics[width=\textwidth]{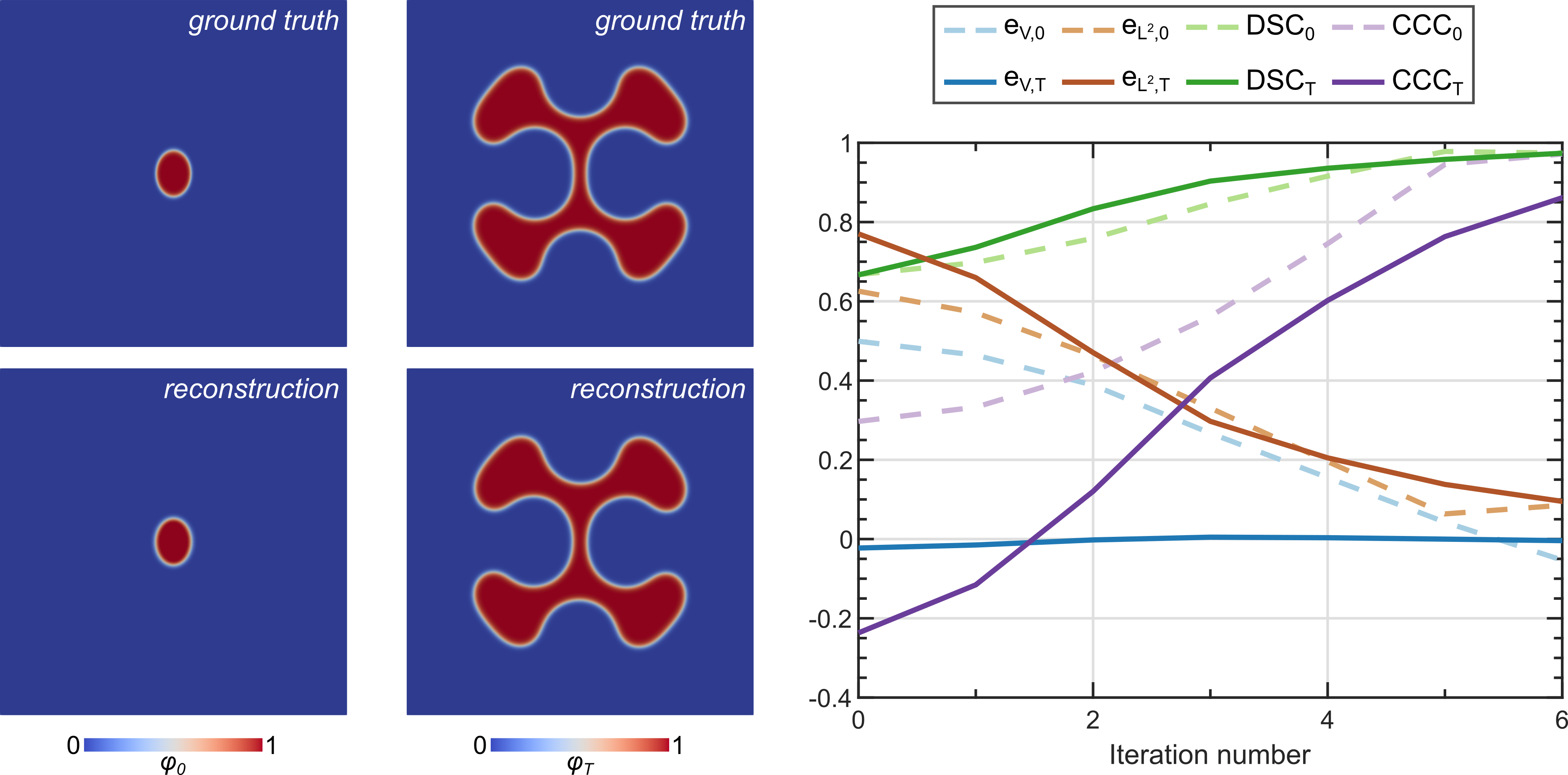} 
    \caption{Reconstruction of the initial tumour phase field from a measurement at $T=365$ days affected by $10\%$ Gaussian noise by leveraging the adaptive gradient descent algorithm. The first two columns compare the tumour phase field from the reference simulation without noise and the corresponding reconstruction at $t=0$ and $t=T$. The plot in the last column provides the values of the four metrics used to assess the reconstruction of the tumour phase field in each iteration of the adaptive gradient descent algorithm.}
    \label{fig:noise_agd365_metrics}
  \end{figure}

\begin{figure}[!t]
    \begin{subfigure}[t]{0.2\textwidth}
      \includegraphics[width=\textwidth]{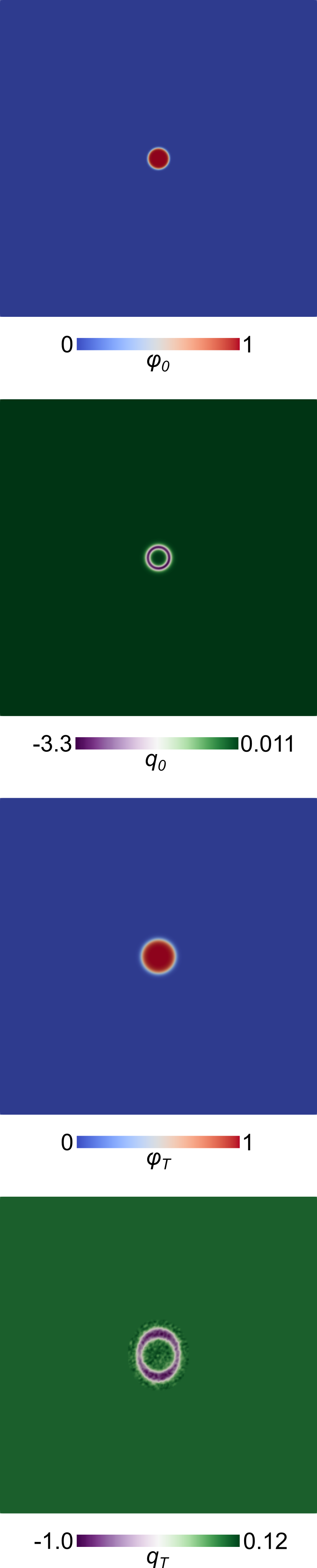}
      \caption{$j=0$}
    \end{subfigure}
    \hfill
    \begin{subfigure}[t]{0.2\textwidth}
      \includegraphics[width=\textwidth]{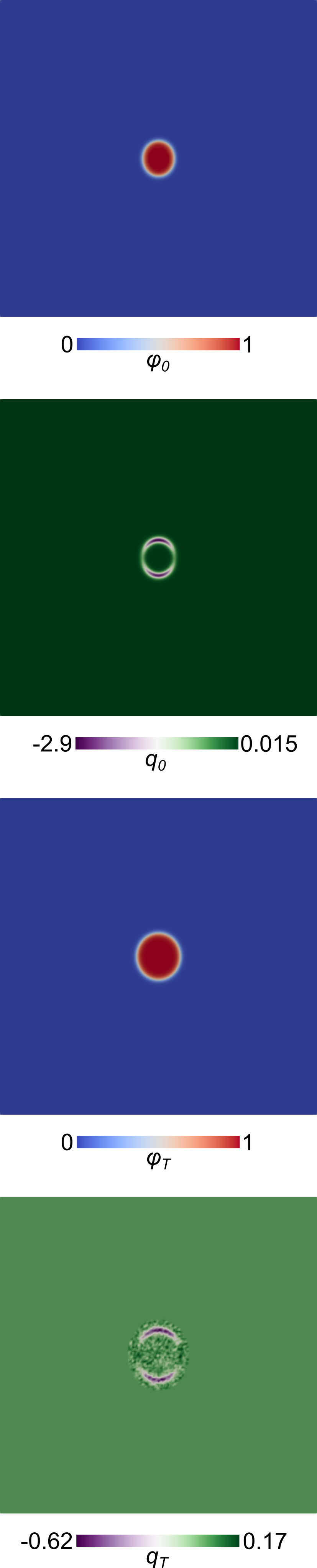}
      \caption{$j=11$}
    \end{subfigure}
    \hfill
    \begin{subfigure}[t]{0.2\textwidth}
      \includegraphics[width=\textwidth]{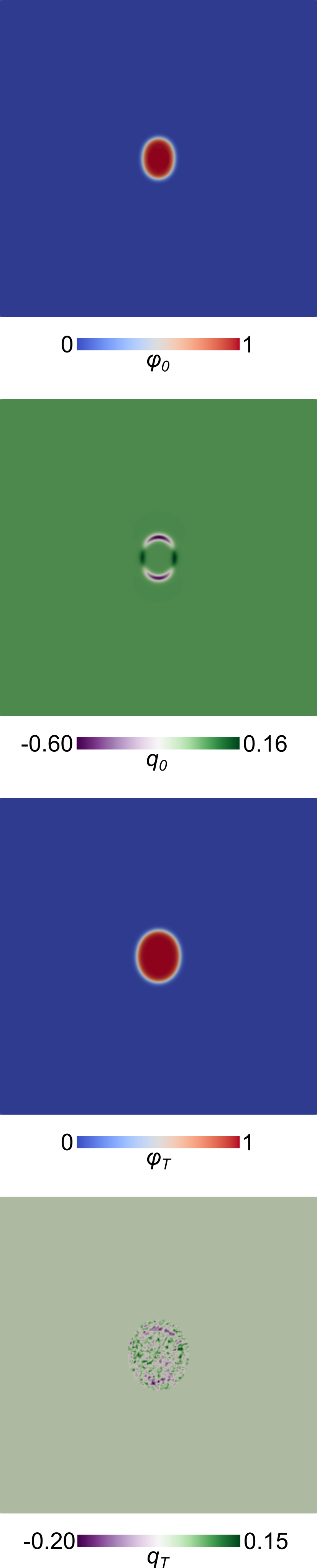}
      \caption{$j=22$}
    \end{subfigure}
    \hfill
    \begin{subfigure}[t]{0.2\textwidth}
      \includegraphics[width=\textwidth]{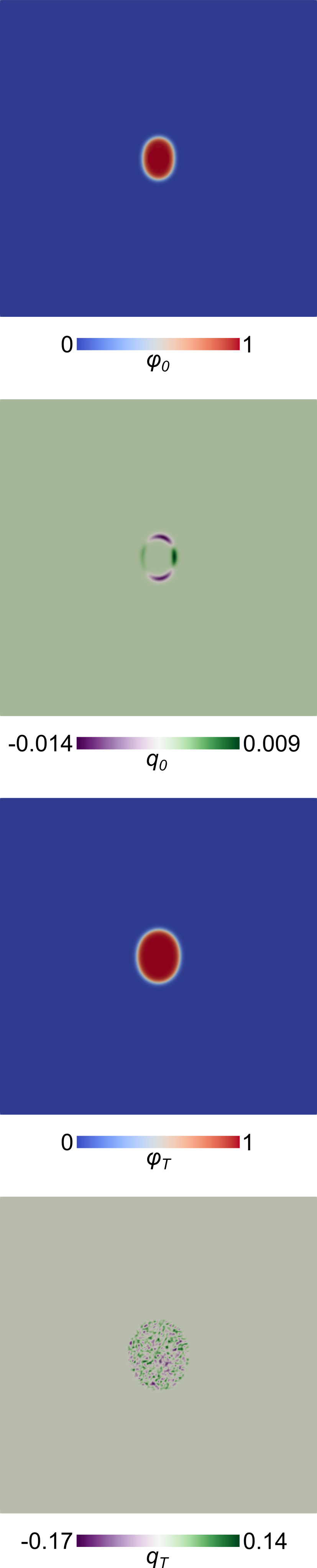}
      \caption{$j=37$}
    \end{subfigure}
    \hfill
    \caption{Reconstruction of the initial tumour phase field from a measurement at $T=15$ days affected by $10\%$ Gaussian noise by leveraging the Landweber iteration scheme. In each panel, the first two rows represent the tumour phase field and its corresponding adjoint variable at $t=0$ (i.e, $\phi_0$ and $q_0$), while the last two rows provide the same quantities at $t=T$ (i.e, $\phi_T$ and $q_T$).}
    \label{fig:noise_landwebert15_iters}
  \end{figure}

   \begin{figure}[!t]
    \begin{subfigure}[t]{0.2\textwidth}
      \includegraphics[width=\textwidth]{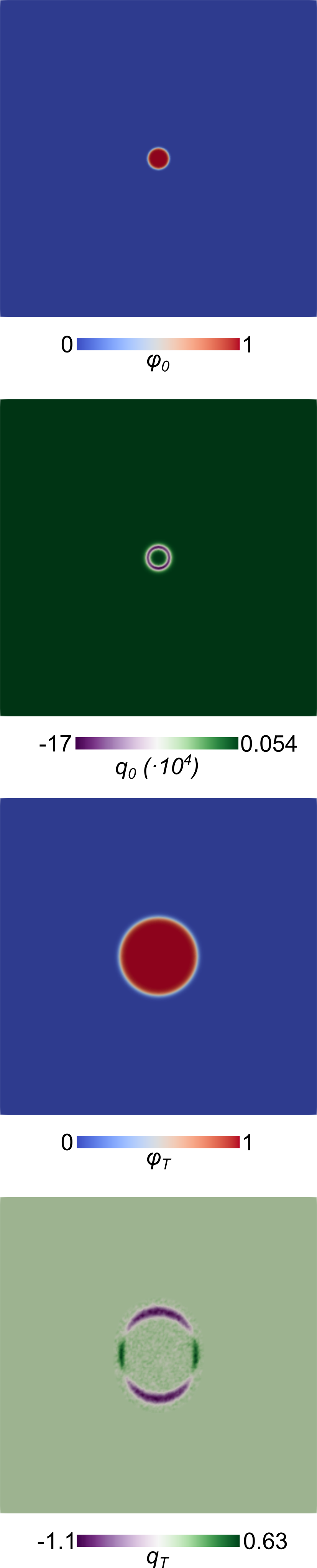}
      \caption{$j=0$}
    \end{subfigure}
    \hfill
    \begin{subfigure}[t]{0.2\textwidth}
      \includegraphics[width=\textwidth]{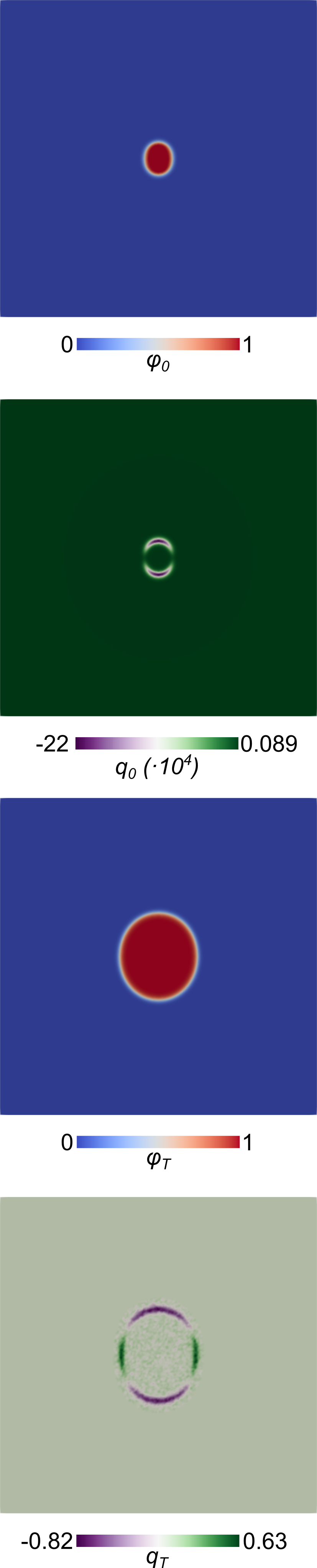}
      \caption{$j=5$}
    \end{subfigure}
    \hfill
    \begin{subfigure}[t]{0.2\textwidth}
      \includegraphics[width=\textwidth]{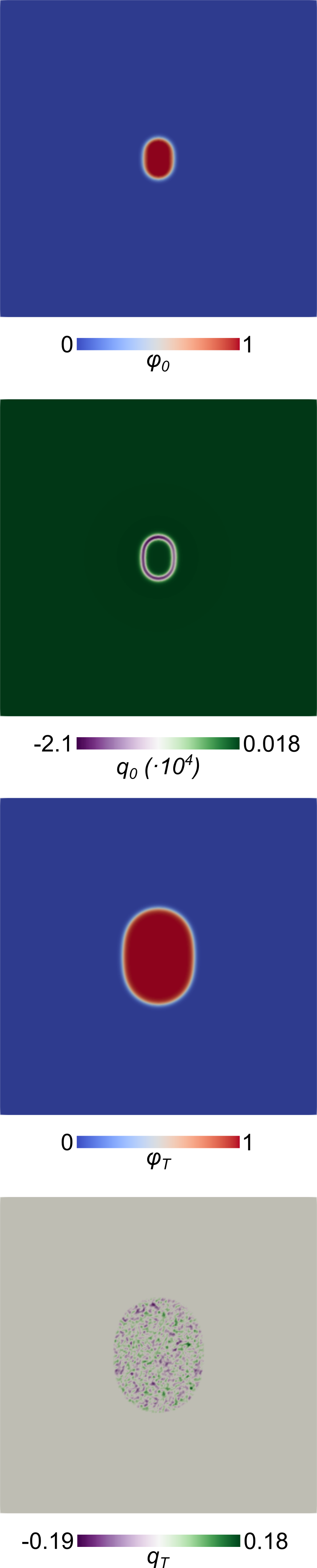}
      \caption{$j=9$}
    \end{subfigure}
    \hfill
    \begin{subfigure}[t]{0.2\textwidth}
      \includegraphics[width=\textwidth]{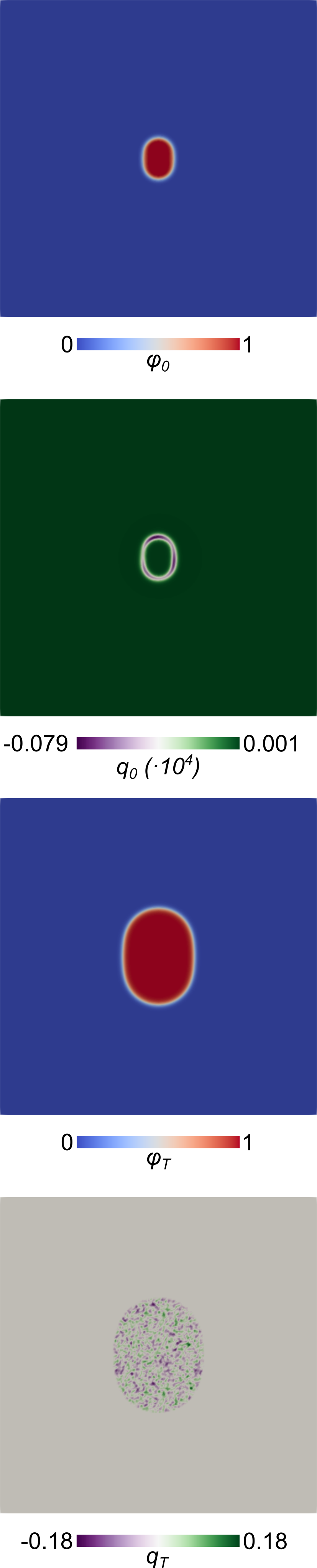}
      \caption{$j=14$}
    \end{subfigure}
    \hfill
    \caption{Reconstruction of the initial tumour phase field from a measurement at $T=90$ days affected by $10\%$ Gaussian noise by employing the adaptive gradient descent algorithm. In each panel, the first two rows represent the tumour phase field and its corresponding adjoint variable at $t=0$ (i.e, $\phi_0$ and $q_0$), while the last two rows provide the same quantities at $t=T$ (i.e, $\phi_T$ and $q_T$).}
    \label{fig:noise_agd90_iters}
  \end{figure}

 \begin{figure}[!t]
    \begin{subfigure}[t]{0.2\textwidth}
      \includegraphics[width=\textwidth]{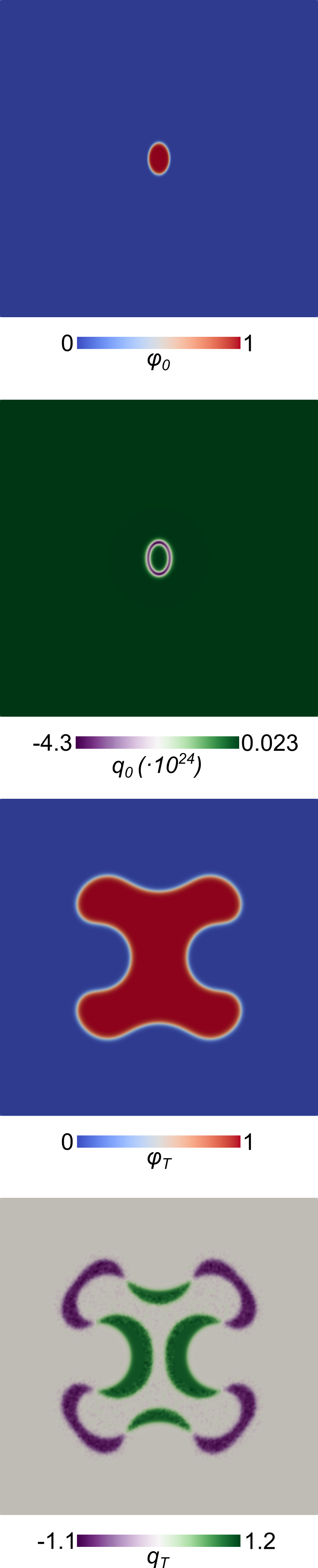}
      \caption{$j=0$}
    \end{subfigure}
    \hfill
    \begin{subfigure}[t]{0.2\textwidth}
      \includegraphics[width=\textwidth]{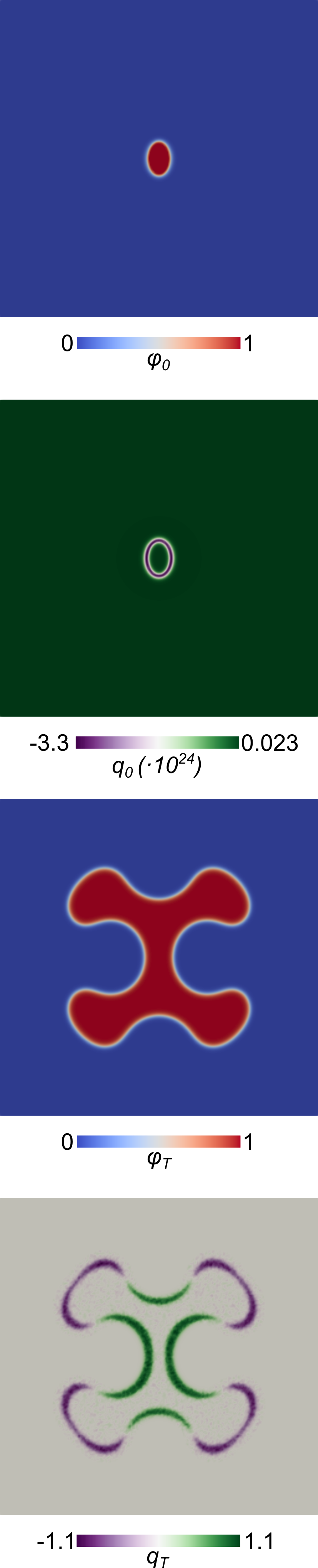}
      \caption{$j=2$}
    \end{subfigure}
    \hfill
    \begin{subfigure}[t]{0.2\textwidth}
      \includegraphics[width=\textwidth]{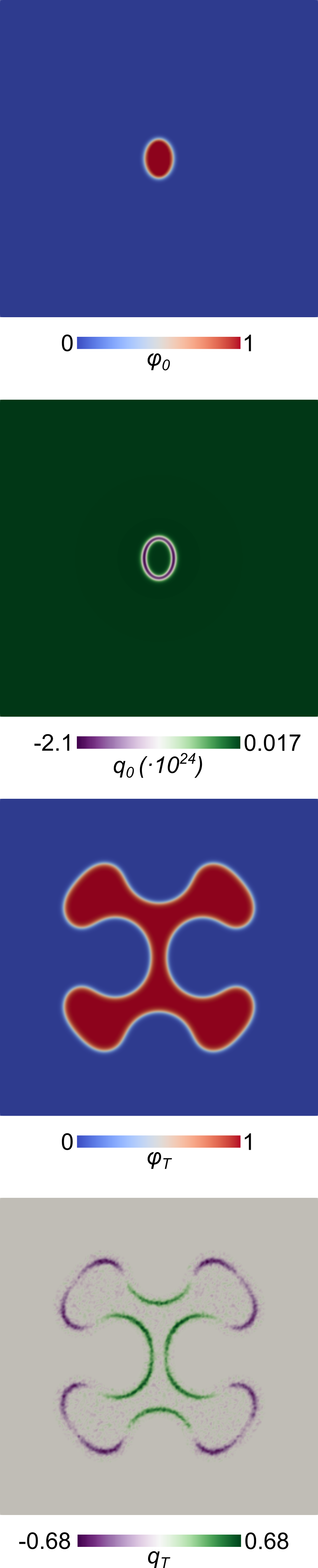}
      \caption{$j=4$}
    \end{subfigure}
    \hfill
    \begin{subfigure}[t]{0.2\textwidth}
      \includegraphics[width=\textwidth]{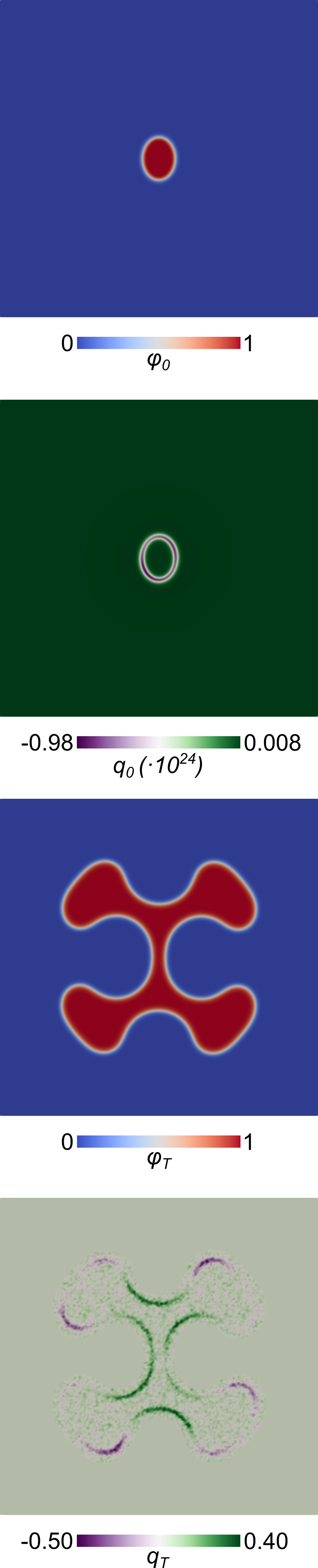}
      \caption{$j=6$}
    \end{subfigure}
    \hfill
    \caption{Reconstruction of the initial tumour phase field from a measurement at $T=365$ days affected by $10\%$ Gaussian noise by employing the adaptive gradient descent algorithm. In each panel, the first two rows represent the tumour phase field and its corresponding adjoint variable at $t=0$ (i.e, $\phi_0$ and $q_0$), while the last two rows provide the same quantities at $t=T$ (i.e, $\phi_T$ and $q_T$).}
    \label{fig:noise_agd365_iters}
  \end{figure}

In this section, we analyse the performance of the Landweber iteration scheme and the adaptive gradient descent algorithm in the presence of noise. 
Figure~\ref{fig:noise_landwebert15_metrics} shows the results for a time horizon $T=15$ days using the Landweber iteration scheme, including the tumour reconstruction at $t=0$ and $t=T$ along with the evolution of the metrics to assess the quality of the tumour reconstruction.
Additionally, Figures~\ref{fig:noise_agd90_metrics} and \ref{fig:noise_agd365_metrics} provide the corresponding results for time horizon $T=90$ and 365 days using the adaptive gradient descent algorithm. 
Similarly to the long-time horizon results presented in Section~\ref{res:longt}, for the latter scenario we adjusted the initial guess to an ellipsoid ($a=$\SI{100}{\micro\meter}, $b=$\SI{150}{\micro\meter}) and we further increased the tolerance $\varepsilon_{SD}$ to $0.1$ in order to achieve convergence.
In Figures~\ref{fig:noise_landwebert15_metrics}-\ref{fig:noise_agd365_metrics}, the values of the metrics at $t=T$ and $t=0$ are calculated with respect to the ground truth without noise to enable comparison with respect to the results in Sections~\ref{res:shortt} and \ref{res:longt}.
Moreover, Figures~\ref{fig:noise_landwebert15_iters}, \ref{fig:noise_agd90_iters}, and \ref{fig:noise_agd365_iters} show the changes in the tumour phase field and the dual variable $q$ at $t=0$ and $t=T$ during the initial tumour reconstruction with the Landweber iteration scheme ($T=15$ days) and the adaptive gradient descent algorithm ($T=90$ and 365 days).
The results in Figures~\ref{fig:noise_landwebert15_metrics}-\ref{fig:noise_agd365_metrics} are analogous to those presented in Figures~\ref{fig:landwebert15_metrics}, \ref{fig:agd90_metrics}, and \ref{fig:agd365_metrics} respectively.
Indeed, the values of the reconstruction metrics at $t=0$ and $t=T$ are virtually the same as those obtained in the corresponding scenarios without noise.
The presence of noise only impacts in a small increase in the number of iterations for convergence in the scenarios with $T=15$ and 90 days.
Additionally, the spatial maps obtained for $\phi_0$, $q_0$, $\phi_T$, and $q_T$ during the tumour reconstruction in the scenarios with a noisy $\phimeas$ align with those calculated in the corresponding scenarios without noise (see  Figures~\ref{fig:landwebert15_iters}, \ref{fig:agd90_iters}, and \ref{fig:agd365_iters}, respectively).
The main difference is observed for $q_T$, since the noisy $\phimeas$ affects its calculation in each iteration of the reconstruction algorithms.
We further observe that the presence of noise only impacts $q_0$ when the difference between $\phimeas$ and $\phi_T$ is in the neighbourhood of the value of the added noise. 
In this situation, $q_T$ essentially consists of noisy values that ultimately render an asymmetric or irregular $q_0$ map.
However, given that the update of the initial tumour phase field towards the end of the algorithm is minimal, these irregularities in $q_0$ do not impact the tumour reconstruction.
Thus, the qualitative and quantitative accuracy of the tumour reconstruction at time $t=0$ and $t=T$ is virtually the same despite the noise added to $\phimeas$.
These results demonstrate the robustness of the Landweber iteration scheme and the adaptive gradient descent method in recovering the initial ground truth under noisy measurements.

\section{Discussion}\label{sec:discussion}

In this work, we studied the inverse problem of reconstructing an earlier state of prostate cancer from a given spatial measurement collected at a certain time of interest $T$.
Within the inverse problem formulation, this instant becomes the time horizon and we used our previously presented phase-field model to recover prostate cancer growth before the measurement \cite{CGLMRR2019,CGLMRR2021,BCFLR2024}.
From a mathematical perspective, the inverse problem consists of reconstructing the initial data $(\phi_0, \sigma_0, p_0)$ starting from a single measurement $(\phimeas, \sigmameas, \pmeas)$ at the end-time $T$. 
We stress that such a problem is generally known to be severly ill-posed, in particular the larger the final time $T$ grows.
Leveraging the quantitative Lipschitz stability estimate on finite-dimensional subspaces found in \cite{BCFLR2024}, we motivated the use of a Landweber iteration scheme to approximate the solution of the inverse problem. 
We proved some theoretical convergence results of such a method in both its variants with a fixed  and an adaptive steepest descent step size. 
With the idea of accelerating its convergence, especially for the most challenging case of large time horizons $T$, we also employed a different choice of the step size, proposed in \cite{MM2019adaptive}. 
Even if lacking theoretical guarantees, due to the heavy use of convexity assumptions in \cite{MM2019adaptive}, this second method proved very powerful in tackling such challenging cases. 

To better understand the performance of the reconstruction algorithms, we conducted a simulation study considering a tumour growing in a square tissue patch and using synthetic ground truth data generated by the phase-field model.
Our results show that the Landweber iteration scheme yields a high-quality reconstruction of the initial tumour conditions for short time horizons (e.g., a few weeks).
These results also confirmed the infralinear convergence of the Landweber method that was derived analytically in Theorem~\ref{thm:landweber}.
Furthermore, the computational study presented in this work  demonstrated the capability of the adaptive gradient descent method to enable the reconstruction of tumours for longer time horizons (e.g., several months to one year).
Nevertheless, to achieve convergence in the longest time horizon considered in the study ($T=365$ days) we needed to adapt the initial guess to a closer geometry to the ground truth and reduce the convergence tolerance used in the other time horizon scenarios.
Of note, this last choice was supported by the results for shorter $T$ cases since they show that convergence is achieved before the reconstruction algorithm reaches the preset tolerance.
Additionally, the results obtained for the $T=365$  demonstrate an excellent reconstruction of the tumour, which are comparable to the results obtained with the adaptive gradient descent algorithm for $T=90$ and the preset tolerance.
To complete our simulation study, we further analysed the performance of both reconstruction methods under a noisy measurement at the time horizon.
Our simulation results in this situation show that both reconstruction algorithms can recover the initial conditions of the tumour with sufficiently high quality.
Thus, our simulation study suggests that (i) the Landweber iteration scheme might be preferred for the reconstruction of recent stages of a tumour (i.e., short $T$) due to the existence of robust convergence analytical guarantees, and (ii) the adaptive gradient descent algorithm can be alternatively leveraged to efficiently handle longer temporal reconstructions, although it may require using initial guesses closer to the ground truth and higher tolerances for convergence.

We believe that we have obtained encouraging results, especially considering that we treated a complex system in full generality, differently from the partial results obtained in the previously mentioned works \cite{JBJA2019, SSMB2020, subramanian2022ensemble}.
Nevertheless, this work also presents some limitations that can be addressed in future studies.
First, we only considered a particular phase-field model for tumour growth \cite{CGLMRR2019,CGLMRR2021,BCFLR2024}.
A more complete analysis of the reconstruction algorithms studied herein should also consider different kinds of phase-field models or models based on Fisher-Kolmogorov and biomechanical formulations, which are also commonly used in computational oncology \cite{Lorenzo2022_review, Wu2022, Hormuth2021, Wong2016, Vavourakis2018, Stylianopoulos2013}.
Additionally, we only considered untreated growth, which can be of interest for newly-diagnosed tumours \cite{Lorenzo2024}, but future studies could also address tumour reconstruction including treatment effects \cite{Lorenzo2022_review, Wu2022, Hormuth2021, CGLMRR2019}, which can be of interest when imaging data are acquired during therapy.
Second, the model parameters governing tumour growth were assumed to be known.
This might be feasible in preclinical cases (e.g., \emph{in vitro} and \emph{in vivo} animal studies) leveraging cancer cell lines with previously characterised dynamic features (e.g., proliferation and invasion rates) \cite{Yang2022,Lima2022,Burbanks2023}.
However, the application of tumour reconstruction methods in clinical scenarios will require the recovery of not only the initial conditions of the tumour, but also the main parameters governing its growth dynamics \cite{SSMB2020, Lorenzo2022_review}.
Of note, for such a challenging mathematical problem, it might be necessary to first perform a sensitivity analysis and a parameter identifiability study to find a small set of significant parameters that can be effectively reconstructed \cite{Lorenzo2023, Craig2023}.
Third, our simulation study focused on a 2D square tissue patch.
Although this computational setup facilitated the analysis of the reconstruction algorithms, future studies should also investigate their performance in 3D scenarios and consider patient-specific  organ anatomies extracted from the reference imaging measurement at the time horizon \cite{Lorenzo2016,Lorenzo2017, Lorenzo2024, Hormuth2021, Wong2016, Wise2008, Xu2016}.
Finally, we used synthetic data generated via simulation of our phase-field model to generate the ground truth.
While this approach enables preliminary confirmation that the proposed reconstruction methods can recover spatiotemporal tumour maps matching the model dynamics, their ultimate validation needs to employ real-world data (e.g., magnetic resonance imaging, computerised tomography) \cite{Lorenzo2022_review}. 

Despite the formidable mathematical and computational challenges involved in the reconstruction of early tumour stages from a unique spatial measurement, the advances in the understanding of this problem could ultimately have a profound impact on the use of computational tumour forecasts in clinical scenarios.
In particular, the accurate personalised prediction of tumour growth from a single imaging dataset at diagnosis is a long-standing challenge in the field of computational oncology, which could dramatically improve treatment planning for better therapeutic outcomes \cite{Lorenzo2022_review, CGLMRR2021, Jarrett2018, Lipkova2019} as well as patient triaging to adequate management options \cite{Lorenzo2024, Brady2020, Yankeelov2024}.
Thus, efficient and robust tumour reconstruction methods can be a key computational asset in the design of digital twins to optimise cancer monitoring and treatment \cite{Wu2022a,Hernandez2021,Yankeelov2024}, thereby contributing towards a more predictive and personalised paradigm in clinical oncology.

	\bigskip
	
	\noindent\textbf{Acknowledgements.}
C. Cavaterra, M. Fornoni and E. Rocca have been partially supported by the MIUR-PRIN Grant 2020F3NCPX 
	``Mathematics for industry 4.0 (Math4I4)''. 
C. Cavaterra has been partially supported by the MIUR-PRIN Grant 2022  
	``Partial differential equations and related geometric-functional inequalities''. 
C. Cavaterra, M. Fornoni and E. Rocca are members of  
	GNAMPA (Gruppo Nazionale per l'Analisi Matematica, la Probabilit\`a e le loro Applicazioni)
	of INdAM (Istituto Nazionale di Alta Matematica). 
 The research of C. Cavaterra is part of the activities of ``Dipartimento di Eccellenza 2023-2027'' of Universit\`a degli Studi di Milano. Elena Beretta's research has been partially supported by NYUAD Science Program Project Fund AD364. 
 E. Rocca also acknowledges the support of Next Generation EU Project No.P2022Z7ZAJ (A unitary mathematical framework for modelling muscular dystrophies).
 G. Lorenzo acknowledges the support of a fellowship from ‘‘la Caixa” Foundation (ID 100010434). The fellowship code is LCF/BQ/PI23/11970033. We also thank the Texas Advanced Computing Center (TACC) for providing high-performance computational resources that contributed to the results presented in this work.
 
	\bigskip
 
\footnotesize

\begin{thebibliography}{5}

\bibitem{agosti2018personalized}
A.~Agosti, C.~Giverso, E.~Faggiano, A.~Stamm, and P.~Ciarletta, \emph{A personalized mathematical tool for neuro-oncology: A clinical case study}, International Journal of Non-Linear Mechanics \textbf{107} (2018), 170--181.

\bibitem{AS2022}
G.~S. Alberti and M.~Santacesaria, \emph{Infinite-dimensional inverse problems with finite measurements}, Arch. Ration. Mech. Anal. \textbf{243} (2022), 1--31.

\bibitem{ali2024tale}
A.~Ali, T.~Elumalai, B.~Venkatesulu, L.~Hekman, H.~Mistry, et~al., \emph{Tale of two zones: investigating the clinical outcomes and research gaps in peripheral and transition zone prostate cancer through a systematic review and meta-analysis}, BMJ Oncology \textbf{3} (2024), e000193.

\bibitem{BBT2017}
H.~H. Bauschke, J.~Bolte, and M.~Teboulle, \emph{A descent lemma beyond {L}ipschitz gradient continuity: first-order methods revisited and applications}, Math. Oper. Res. \textbf{42} (2017), 330--348.

\bibitem{BCFLR2024}
E.~Beretta, C.~Cavaterra, M.~Fornoni, G.~Lorenzo, and E.~Rocca, \emph{Mathematical analysis of a model-constrained inverse problem for the reconstruction of early states of prostate cancer growth}, arXiv preprint arXiv:2404.12198, to appear in SIAM J. Appl. Math. (2024).

\bibitem{Berges1995}
R.~R. Berges, J.~Vukanovic, J.~I. Epstein, M.~CarMichel, L.~Cisek, D.~E. Johnson, R.~W. Veltri, P.~C. Walsh, and J.~T. Isaacs, \emph{Implication of cell kinetic changes during the progression of human prostatic cancer.}, Clin. Cancer Res. \textbf{1} (1995), 473--480.

\bibitem{Brady2020}
R.~Brady-Nicholls, J.~D. Nagy, T.~A. Gerke, T.~Zhang, A.~Z. Wang, J.~Zhang, R.~A. Gatenby, and H.~Enderling, \emph{Prostate-specific antigen dynamics predict individual responses to intermittent androgen deprivation}, Nat. Commun. \textbf{11} (2020), 1750.

\bibitem{Burbanks2023}
A.~Burbanks, M.~Cerasuolo, R.~Ronca, and L.~Turner, \emph{A hybrid spatiotemporal model of {PCa} dynamics and insights into optimal therapeutic strategies}, Math. Biosci. \textbf{355} (2023), 108940.

\bibitem{Cavaterra2019}
C.~Cavaterra, E.~Rocca, and H.~Wu, \emph{Long-time dynamics and optimal control of a diffuse interface model for tumor growth}, Appl. Math. Optim. \textbf{83} (2021), 739--787.

\bibitem{chaudhuri2023predictive}
A.~Chaudhuri, G.~Pash, D.~A. Hormuth, G.~Lorenzo, M.~Kapteyn, C.~Wu, E.~A. Lima, T.~E. Yankeelov, and K.~Willcox, \emph{Predictive digital twin for optimizing patient-specific radiotherapy regimens under uncertainty in high-grade gliomas}, Frontiers in Artificial Intelligence \textbf{6} (2023), 1222612.

\bibitem{Chung1993}
J.~Chung and G.~Hulbert, \emph{A time integration algorithm for structural dynamics with improved numerical dissipation: The generalized-$\alpha$ method}, J. Appl. Mech. \textbf{60} (1993), 371--375.

\bibitem{Colli2017}
P.~Colli, G.~Gilardi, E.~Rocca, and J.~Sprekels, \emph{Optimal distributed control of a diffuse interface model of tumor growth}, Nonlinearity \textbf{30} (2017), 2518--2546.

\bibitem{CGLMRR2019}
P.~Colli, H.~Gomez, G.~Lorenzo, G.~Marinoschi, A.~Reali, and E.~Rocca, \emph{Mathematical analysis and simulation study of a phase-field model of prostate cancer growth with chemotherapy and antiangiogenic therapy effects}, Math. Models Methods Appl. Sci. \textbf{30} (2020), 1253--1295.

\bibitem{CGLMRR2021}
P.~Colli, H.~Gomez, G.~Lorenzo, G.~Marinoschi, A.~Reali, and E.~Rocca, \emph{Optimal control of cytotoxic and antiangiogenic therapies on prostate cancer growth}, Math. Models Methods Appl. Sci. \textbf{31} (2021), 1419--1468.

\bibitem{Cornford2021}
P.~Cornford, R.~C. van~den Bergh, E.~Briers, T.~Van~den Broeck, M.~G. Cumberbatch, et~al., \emph{{EAU-EANM-ESTRO-ESUR-SIOG} guidelines on prostate cancer. part {II}—2020 update: treatment of relapsing and metastatic prostate cancer}, Eur. Urol. \textbf{79} (2021), 263--282.

\bibitem{Cottrell2009}
J.~A. Cottrell, T.~J. Hughes, and Y.~Bazilevs, \emph{Isogeometric analysis: toward integration of {CAD} and {FEA}}, John Wiley \& Sons, 2009.

\bibitem{Craig2023}
M.~Craig, J.~L. Gevertz, I.~Kareva, and K.~P. Wilkie, \emph{A practical guide for the generation of model-based virtual clinical trials}, Front. Syst. Biol. \textbf{3} (2023), 1174647.

\bibitem{DQS2012}
M.~V. de~Hoop, L.~Qiu, and O.~Scherzer, \emph{Local analysis of inverse problems: {H}\"{o}lder stability and iterative reconstruction}, Inverse Problems \textbf{28} (2012), 045001, 16 pp.

\bibitem{DHS2011}
J.~Duchi, E.~Hazan, and Y.~Singer, \emph{Adaptive subgradient methods for online learning and stochastic optimization}, J. Mach. Learn. Res. \textbf{12} (2011), 2121--2159.

\bibitem{EK2020}
M.~Ebenbeck and P.~Knopf, \emph{Optimal control theory and advanced optimality conditions for a diffuse interface model of tumor growth}, ESAIM Control Optim. Calc. Var. \textbf{26} (2020), Paper No. 71, 38.

\bibitem{F2024}
M.~Fornoni, \emph{Maximal regularity and optimal control for a non-local {C}ahn-{H}illiard tumour growth model}, J. Differential Equations \textbf{410} (2024), 382--448.

\bibitem{FLS2021}
S.~Frigeri, K.~F. Lam, and A.~Signori, \emph{Strong well-posedness and inverse identification problem of a non-local phase field tumour model with degenerate mobilities}, European J. Appl. Math. \textbf{33} (2022), 267--308.

\bibitem{Garcke2018}
H.~Garcke, K.~F. Lam, and E.~Rocca, \emph{Optimal control of treatment time in a diffuse interface model of tumor growth}, Appl. Math. Optim. \textbf{78} (2018), 495--544.

\bibitem{Giganti2018}
F.~Giganti, C.~M. Moore, S.~Punwani, C.~Allen, M.~Emberton, and A.~Kirkham, \emph{The natural history of prostate cancer on {MRI}: lessons from an active surveillance cohort}, Prostate Cancer Prostatic Dis. \textbf{21} (2018), 556--563.

\bibitem{Giganti2021}
F.~Giganti, A.~Stabile, V.~Stavrinides, E.~Osinibi, A.~Retter, et~al., \emph{Natural history of prostate cancer on active surveillance: stratification by {MRI} using the {PRECISE} recommendations in a {UK} cohort}, Eur. Radiol. \textbf{31} (2021), 1644--1655.

\bibitem{gupta2022systemic}
R.~K. Gupta, A.~M. Roy, A.~Gupta, K.~Takabe, A.~Dhakal, M.~Opyrchal, P.~Kalinski, and S.~Gandhi, \emph{Systemic therapy de-escalation in early-stage triple-negative breast cancer: dawn of a new era?}, Cancers \textbf{14} (2022), 1856.

\bibitem{HNS1995}
M.~Hanke, A.~Neubauer, and O.~Scherzer, \emph{A convergence analysis of the {L}andweber iteration for nonlinear ill-posed problems}, Numer. Math. \textbf{72} (1995), 21--37.

\bibitem{Hao2011}
D.~N. H\`ao and N.~V. Duc, \emph{Stability results for backward parabolic equations with time-dependent coefficients}, Inverse Problems \textbf{27} (2011), 025003, 20.

\bibitem{Hernandez2021}
T.~Hernandez-Boussard, P.~Macklin, E.~J. Greenspan, A.~L. Gryshuk, E.~Stahlberg, T.~Syeda-Mahmood, and I.~Shmulevich, \emph{Digital twins for predictive oncology will be a paradigm shift for precision cancer care}, Nat. Med. \textbf{27} (2021), 2065--2066.

\bibitem{Hormuth2021}
D.~A. Hormuth, K.~A. Al~Feghali, A.~M. Elliott, T.~E. Yankeelov, and C.~Chung, \emph{Image-based personalization of computational models for predicting response of high-grade glioma to chemoradiation}, Sci. Rep. \textbf{11} (2021), 8520.

\bibitem{HR2017}
S.~Hubmer and R.~Ramlau, \emph{Convergence analysis of a two-point gradient method for nonlinear ill-posed problems}, Inverse Problems \textbf{33} (2017), 095004.

\bibitem{isakov}
V.~Isakov, \emph{Inverse problems for partial differential equations}, third ed., Applied Mathematical Sciences, vol. 127, Springer, Cham, 2017.

\bibitem{Jansen2000}
K.~E. Jansen, C.~H. Whiting, and G.~M. Hulbert, \emph{A generalized-$\alpha$ method for integrating the filtered {N}avier--{S}tokes equations with a stabilized finite element method}, Comput. Methods Appl. Mech. Eng. \textbf{190} (2000), 305--319.

\bibitem{JBJA2019}
R.~Jaroudi, G.~Baravdish, B.~T. Johansson, and F.~{\AA}str\"{o}m, \emph{Numerical reconstruction of brain tumours}, Inverse Probl. Sci. Eng. \textbf{27} (2019), 278--298.

\bibitem{Jarrett2018}
A.~M. Jarrett, D.~A. Hormuth, S.~L. Barnes, X.~Feng, W.~Huang, and T.~E. Yankeelov, \emph{Incorporating drug delivery into an imaging-driven, mechanics-coupled reaction diffusion model for predicting the response of breast cancer to neoadjuvant chemotherapy: theory and preliminary clinical results}, Phys. Med. Biol. \textbf{63} (2018), 105015.

\bibitem{jungk2019location}
C.~Jungk, R.~Warta, A.~Mock, S.~Friauf, B.~Hug, et~al., \emph{Location-dependent patient outcome and recurrence patterns in idh1-wildtype glioblastoma}, Cancers \textbf{11} (2019), 122.

\bibitem{kaltenbacher:neubauer:scherzer}
B.~Kaltenbacher, A.~Neubauer, and O.~Scherzer, \emph{Iterative regularization methods for nonlinear ill-posed problems}, Walter de Gruyter, Berlin, 2008.

\bibitem{kazerouni2020integrating}
A.~S. Kazerouni, M.~Gadde, A.~Gardner, D.~A. Hormuth, A.~M. Jarrett, et~al., \emph{Integrating quantitative assays with biologically based mathematical modeling for predictive oncology}, Iscience \textbf{23} (2020), 101807.

\bibitem{LRS}
M.~M. Lavrent'ev, V.~G. Romanov, and S.~P. Shishat$\cdot$ski\u{\i}, \emph{Ill-posed problems of mathematical physics and analysis}, Translations of Mathematical Monographs, vol.~64, American Mathematical Society, Providence, RI, 1986, Translated from Russian by J. R. Schulenberger.

\bibitem{Lima2022}
E.~A. Lima, R.~A. Wyde, A.~G. Sorace, and T.~E. Yankeelov, \emph{Optimizing combination therapy in a murine model of her2+ breast cancer}, Comput. Methods Appl. Mech. Eng. \textbf{402} (2022), 115484.

\bibitem{Lin1989}
L.~I.-K. Lin, \emph{A concordance correlation coefficient to evaluate reproducibility}, Biometrics (1989), 255--268.

\bibitem{Lipkova2019}
J.~Lipkov{\'a}, P.~Angelikopoulos, S.~Wu, E.~Alberts, B.~Wiestler, et~al., \emph{Personalized radiotherapy design for glioblastoma: integrating mathematical tumor models, multimodal scans, and {Bayesian} inference}, IEEE Trans. Med. Imaging \textbf{38} (2019), 1875--1884.

\bibitem{Lorenzo2017}
G.~Lorenzo, M.~Scott, K.~Tew, T.~Hughes, and H.~Gomez, \emph{Hierarchically refined and coarsened splines for moving interface problems, with particular application to phase-field models of prostate tumor growth}, Comput. Methods Appl. Mech. Eng. \textbf{319} (2017), 515--548.

\bibitem{Lorenzo2024}
G.~Lorenzo, J.~S. Heiselman, M.~A. Liss, M.~I. Miga, H.~Gomez, T.~E. Yankeelov, A.~Reali, and T.~J. Hughes, \emph{A pilot study on patient-specific computational forecasting of prostate cancer growth during active surveillance using an imaging-informed biomechanistic model}, Cancer Res. Commun. \textbf{4} (2024), 617--633.

\bibitem{Lorenzo2022_review}
G.~Lorenzo, D.~A. Hormuth~II, A.~M. Jarrett, E.~A. Lima, S.~Subramanian, G.~Biros, J.~T. Oden, T.~J. Hughes, and T.~E. Yankeelov, \emph{Quantitative in vivo imaging to enable tumour forecasting and treatment optimization}, Cancer, Complexity, Computation, Springer, 2022, pp.~55--97.

\bibitem{Lorenzo2019}
G.~Lorenzo, T.~J. Hughes, P.~Dominguez-Frojan, A.~Reali, and H.~Gomez, \emph{Computer simulations suggest that prostate enlargement due to benign prostatic hyperplasia mechanically impedes prostate cancer growth}, Proc. Natl. Acad. Sci. U.S.A. \textbf{116} (2019), 1152--1161.

\bibitem{Lorenzo2023}
G.~Lorenzo, A.~M. Jarrett, C.~T. Meyer, J.~C. DiCarlo, J.~Virostko, V.~Quaranta, D.~R. Tyson, and T.~E. Yankeelov, \emph{A global sensitivity analysis of a mechanistic model of neoadjuvant chemotherapy for triple negative breast cancer constrained by in vitro and in vivo imaging data}, Eng. Comput. (2023), 1--31.

\bibitem{Lorenzo2016}
G.~Lorenzo, M.~A. Scott, K.~Tew, T.~J. Hughes, Y.~J. Zhang, L.~Liu, G.~Vilanova, and H.~Gomez, \emph{Tissue-scale, personalized modeling and simulation of prostate cancer growth}, Proc. Natl. Acad. Sci. U.S.A. \textbf{113} (2016), E7663--E7671.

\bibitem{MM2019adaptive}
Y.~Malitsky and K.~Mishchenko, \emph{Adaptive gradient descent without descent}, arXiv preprint arXiv:1910.09529 (2019), 22 pp.

\bibitem{Mottet2021}
N.~Mottet, R.~C. van~den Bergh, E.~Briers, T.~Van~den Broeck, M.~G. Cumberbatch, et~al., \emph{{EAU-EANM-ESTRO-ESUR-SIOG} guidelines on prostate cancer—2020 update. part 1: screening, diagnosis, and local treatment with curative intent}, Eur. Urol. \textbf{79} (2021), 243--262.

\bibitem{neal2020ten}
D.~E. Neal, C.~Metcalfe, J.~L. Donovan, J.~A. Lane, M.~Davis, et~al., \emph{Ten-year mortality, disease progression, and treatment-related side effects in men with localised prostate cancer from the protect randomised controlled trial according to treatment received}, European Urology \textbf{77} (2020), 320--330.

\bibitem{nesterov}
Y.~Nesterov, \emph{Introductory lectures on convex optimization: A basic course}, Applied Optimization, vol.~87, Springer Science \& Business Media, 2013.

\bibitem{Payne75}
L.~E. Payne, \emph{Improperly posed problems in partial differential equations}, Regional Conference Series in Applied Mathematics, vol. No. 22, Society for Industrial and Applied Mathematics, Philadelphia, PA, 1975.

\bibitem{Saad1986}
Y.~Saad and M.~H. Schultz, \emph{{GMRES}: A generalized minimal residual algorithm for solving nonsymmetric linear systems}, SIAM J. Sci. Stat. Comput. \textbf{7} (1986), 856--869.

\bibitem{Schmid1993}
H.-P. Schmid, J.~E. McNeal, and T.~A. Stamey, \emph{Observations on the doubling time of prostate cancer. {T}he use of serial prostate-specific antigen in patients with untreated disease as a measure of increasing cancer volume}, Cancer \textbf{71} (1993), 2031--2040.

\bibitem{Stylianopoulos2013}
T.~Stylianopoulos, J.~D. Martin, M.~Snuderl, F.~Mpekris, S.~R. Jain, and R.~K. Jain, \emph{Coevolution of solid stress and interstitial fluid pressure in tumors during progression: implications for vascular collapse}, Cancer Res. \textbf{73} (2013), 3833--3841.

\bibitem{subramanian2022ensemble}
S.~Subramanian, A.~Ghafouri, K.~M. Scheufele, N.~Himthani, C.~Davatzikos, and G.~Biros, \emph{Ensemble inversion for brain tumor growth models with mass effect}, IEEE transactions on medical imaging \textbf{42} (2022), 982--995.

\bibitem{SSMB2020}
S.~Subramanian, K.~Scheufele, M.~Mehl, and G.~Biros, \emph{Where did the tumor start? {A}n inverse solver with sparse localization for tumor growth models}, Inverse problems \textbf{36} (2020), 045006.

\bibitem{tudorica2016early}
A.~Tudorica, K.~Y. Oh, S.~Y. Chui, N.~Roy, M.~L. Troxell, et~al., \emph{Early prediction and evaluation of breast cancer response to neoadjuvant chemotherapy using quantitative dce-mri}, Translational oncology \textbf{9} (2016), 8--17.

\bibitem{Vavourakis2018}
V.~Vavourakis, T.~Stylianopoulos, and P.~A. Wijeratne, \emph{In-silico dynamic analysis of cytotoxic drug administration to solid tumours: Effect of binding affinity and vessel permeability}, PLoS Comput. Biol. \textbf{14} (2018), e1006460.

\bibitem{gomez2018computational}
G.~Vilanova, M.~Burés, I.~Colominas, and H.~Gomez, \emph{Computational modelling suggests complex interactions between interstitial flow and tumour angiogenesis}, J. R. Soc. Interface \textbf{15} (2018), 20180415.

\bibitem{Wise2008}
S.~Wise, J.~Lowengrub, H.~Frieboes, and V.~Cristini, \emph{Three-dimensional multispecies nonlinear tumor growth-{I}: Model and numerical method}, J. Theor. Biol. \textbf{253} (2008), 524--543.

\bibitem{Wong2016}
K.~C. Wong, R.~M. Summers, E.~Kebebew, and J.~Yao, \emph{Pancreatic tumor growth prediction with elastic-growth decomposition, image-derived motion, and {FDM-FEM} coupling}, IEEE Trans. Med. Imaging \textbf{36} (2016), 111--123.

\bibitem{Wu2022}
C.~Wu, A.~M. Jarrett, Z.~Zhou, N.~Elshafeey, B.~E. Adrada, et~al., \emph{{MRI}-based digital models forecast patient-specific treatment responses to neoadjuvant chemotherapy in triple-negative breast cancer}, Cancer Res. \textbf{82} (2022), 3394--3404.

\bibitem{Wu2022a}
C.~Wu, G.~Lorenzo, D.~A. Hormuth, E.~A. Lima, K.~P. Slavkova, et~al., \emph{Integrating mechanism-based modeling with biomedical imaging to build practical digital twins for clinical oncology}, Biophys. Rev. \textbf{3} (2022), 021304.

\bibitem{Xu2016}
J.~Xu, G.~Vilanova, and H.~Gomez, \emph{A mathematical model coupling tumor growth and angiogenesis}, PloS One \textbf{11} (2016), e0149422.

\bibitem{Yang2022}
E.~Y. Yang, G.~R. Howard, A.~Brock, T.~E. Yankeelov, and G.~Lorenzo, \emph{Mathematical characterization of population dynamics in breast cancer cells treated with doxorubicin}, Front. Mol. Biosci. \textbf{9} (2022), 972146.

\bibitem{Yankeelov2024}
T.~E. Yankeelov, D.~A. Hormuth, E.~A. Lima, G.~Lorenzo, C.~Wu, L.~C. Okereke, G.~M. Rauch, A.~M. Venkatesan, and C.~Chung, \emph{Designing clinical trials for patients who are not average}, Iscience \textbf{27} (2024), 108589.

\bibitem{yin2019review}
A.~Yin, D.~J.~A. Moes, J.~G. van Hasselt, J.~J. Swen, and H.-J. Guchelaar, \emph{A review of mathematical models for tumor dynamics and treatment resistance evolution of solid tumors}, CPT: pharmacometrics \& systems pharmacology \textbf{8} (2019), 720--737.

\bibitem{ziu2020role}
M.~Ziu, B.~Y. Kim, W.~Jiang, T.~Ryken, and J.~J. Olson, \emph{The role of radiation therapy in treatment of adults with newly diagnosed glioblastoma multiforme: a systematic review and evidence-based clinical practice guideline update}, Journal of Neuro-oncology \textbf{150} (2020), 215--267.

\end{thebibliography}

\end{document}